\documentclass[letterpaper,11pt]{amsart}
\usepackage{indentfirst} 
\usepackage{amssymb}
\usepackage{mathtools} 
\usepackage{mathabx} 
\usepackage{amsthm}
\usepackage{thmtools}
\usepackage{enumitem} 
\usepackage[colorlinks=true]{hyperref} 
\usepackage[dvipsnames]{xcolor} 
\usepackage{ifthen}
\usepackage{tikz}
\usetikzlibrary{decorations.pathreplacing}
\usepackage{tikz-cd} 
\usepackage[left=3.2cm, right=3.2cm, bottom=3.4cm]{geometry} 
  
\makeatletter

\def\MRbibitem{\@ifnextchar[\my@lbibitem\my@bibitem}
 
\def\mybiblabel#1#2{\@biblabel{{\hyperref{http://www.ams.org/mathscinet-getitem?mr=#1}{}{}{#2}}}}

\def\myhyperanchor#1{\Hy@raisedlink{\hyper@anchorstart{cite.#1}\hyper@anchorend}}

\def\my@lbibitem[#1]#2#3#4\par{%
  \item[\mybiblabel{#2}{#1}\myhyperanchor{#3}\hfill]#4%
  \@ifundefined{ifbackrefparscan}{}{\BR@backref{#3}}%
  \if@filesw{\let\protect\noexpand\immediate
    \write\@auxout{\string\bibcite{#3}{#1}}}\fi\ignorespaces%
}

\def\my@bibitem#1#2#3\par{%
  \refstepcounter\@listctr
  \item[\mybiblabel{#1}{\the\value\@listctr}\myhyperanchor{#2}\hfill]#3%
  \@ifundefined{ifbackrefparscan}{}{\BR@backref{#2}}%
  \if@filesw\immediate\write\@auxout
    {\string\bibcite{#2}{\the\value\@listctr}}\fi\ignorespaces%
}

\makeatother

\DeclareFontFamily{U} {MnSymbolA}{}
\DeclareFontShape{U}{MnSymbolA}{m}{n}{
   <-6> MnSymbolA5
   <6-7> MnSymbolA6
   <7-8> MnSymbolA7
   <8-9> MnSymbolA8
   <9-10> MnSymbolA9
   <10-12> MnSymbolA10
   <12-> MnSymbolA12}{}
\DeclareFontShape{U}{MnSymbolA}{b}{n}{
   <-6> MnSymbolA-Bold5
   <6-7> MnSymbolA-Bold6
   <7-8> MnSymbolA-Bold7
   <8-9> MnSymbolA-Bold8
   <9-10> MnSymbolA-Bold9
   <10-12> MnSymbolA-Bold10
   <12-> MnSymbolA-Bold12}{}
\DeclareSymbolFont{MnSyA} {U} {MnSymbolA}{m}{n}
 \DeclareFontFamily{U} {MnSymbolC}{}
\DeclareFontShape{U}{MnSymbolC}{m}{n}{
  <-6> MnSymbolC5
  <6-7> MnSymbolC6
  <7-8> MnSymbolC7
  <8-9> MnSymbolC8
  <9-10> MnSymbolC9
  <10-12> MnSymbolC10
  <12-> MnSymbolC12}{}
\DeclareFontShape{U}{MnSymbolC}{b}{n}{
  <-6> MnSymbolC-Bold5
  <6-7> MnSymbolC-Bold6
  <7-8> MnSymbolC-Bold7
  <8-9> MnSymbolC-Bold8
  <9-10> MnSymbolC-Bold9
  <10-12> MnSymbolC-Bold10
  <12-> MnSymbolC-Bold12}{}
\DeclareSymbolFont{MnSyC} {U} {MnSymbolC}{m}{n}

\DeclareMathSymbol{\top}{\mathord}{MnSyA}{219} 
\DeclareMathSymbol{\plus}{\mathord}{MnSyC}{20} 

\declaretheorem[numberwithin=section]{theorem}
\declaretheorem[sibling=theorem]{lemma}
\declaretheorem[sibling=theorem]{corollary}
\declaretheorem[sibling=theorem]{proposition}
\declaretheorem[sibling=theorem,style=definition]{definition}

\declaretheorem[sibling=theorem,style=remark]{remark}

\hypersetup{bookmarksdepth = 3} 
\numberwithin{equation}{section}     

\setlist[enumerate,1]{label={\upshape(\alph*)},ref=\alph*}
\setlist[enumerate,2]{label={\upshape(\arabic*)},ref=\arabic*}

\newcommand{\K}{\mathcal{K}}
\newcommand{\M}{\mathcal{M}}
\newcommand{\R}{\mathbb{R}}
\newcommand{\Z}{\mathbb{Z}}
\newcommand{\N}{\mathbb{N}}

\newcommand{\cR}{\mathcal{R}}

\def\L{{\text{Leb}}}

\def\phi{\varphi}
\def\R{{\mathbb R}}

\def\N{{\mathbb N}}
\def\Z{{\mathbb Z}}

\def\B{{\mathcal B}}

\def\Q{{\mathbb Q}}

\def\M{{\mathcal M}}

\def\le{\leqslant}
\def\ge{\geqslant}

\def\M{\mathcal{M}}

\newcommand{\vertiii}[1]{{\left\vert\kern-0.25ex\left\vert\kern-0.25ex\left\vert #1 
    \right\vert\kern-0.25ex\right\vert\kern-0.25ex\right\vert}}
\newcommand{\invertiii}[1]{{\vert\kern-0.25ex\vert\kern-0.25ex\vert #1 
    \vert\kern-0.25ex\vert\kern-0.25ex\vert}}

\begin{document}

\title{Equidistribution and thermodynamics at infinity}

\date{\today}

\subjclass[2010]{37D35, 37A10, 37A35}

\begin{thanks}
{G.I.\ was partially supported by Proyecto Fondecyt Regular 1230100. F. R.\ was partially supported by Proyecto Fondecyt Regular 1231257. A.V.\ was partially supported  by Proyecto Fondecyt Regular  1250928.}
\end{thanks}

\author[G.~Iommi]{Godofredo Iommi}
\address{Facultad de Matem\'aticas,
Pontificia Universidad Cat\'olica de Chile (UC), Avenida Vicu\~na Mackenna 4860, Santiago, Chile}
 \email{\href{mailto:godofredo.iommi@gmail.com}{godofredo.iommi@gmail.com}} 
\urladdr{\url{http://http://www.mat.uc.cl/~giommi/}}

 \author[F.~Riquelme]{Felipe Riquelme}  \address{Instituto de Matem\'aticas, Pontificia Universidad Cat\'olica de Valpara\'iso (PUCV), Blanco Viel 596, Valpara\'iso, Chile}
\email{\href{felipe.riquelme@pucv.cl}{felipe.riquelme@pucv.cl}}
\urladdr{\href{https://sites.google.com/view/feliperiquelmeabarca/}{https://sites.google.com/view/feliperiquelmeabarca/}}

 \author[A.~Velozo]{Anibal Velozo}  \address{Facultad de Matem\'aticas,
Pontificia Universidad Cat\'olica de Chile (UC), Avenida Vicu\~na Mackenna 4860, Santiago, Chile}
\email{\href{apvelozo@uc.cl}{apvelozo@uc.cl}}
\urladdr{\href{https://sites.google.com/view/apvelozo}{https://sites.google.com/view/apvelozo}}

\begin{abstract} We prove level-2 large deviation upper bounds for potentials on countable Markov shifts and for suspension semi-flows over countable Markov shifts. For strongly positive recurrent potentials, we establish equidistribution of weighted empirical measures toward the corresponding equilibrium state. We then apply these results to interval maps, obtaining, in particular, equidistribution  and large-deviation estimates for measures supported on boundary points. For the Gauss map, this yields equidistribution results on rational numbers, including a theorem of David and Shapira.

\end{abstract}

\maketitle

\section{Introduction}

In 1916, Weyl \cite{we} proved that if a polynomial $p(x) \in \mathbb{R}[x]$ has at least one irrational non-constant coefficient, then the sequence $(p(n))_{n\ge 1}$ is equidistributed modulo one. Since then, equidistribution theory has developed in several directions and has found applications in many areas. Its connections with dynamical systems and Diophantine approximation, however, remain central to the subject.

A standard way to prove equidistribution results in dynamics is to derive them from large-deviation estimates. In the early 1990s, large-deviation principles for dynamical systems were established by Kifer \cite{kif} and Young \cite{y}; such results can be used, in particular, to obtain equidistribution of periodic orbits. In this article we follow this general strategy for both discrete  and continuous time systems.

More precisely, we consider countable Markov shifts and suspension semi-flows over them, both of which are dynamical systems defined over non-compact phase spaces. In this setting, orbits and invariant measures may escape to infinity, and the compactness properties underlying the classical large-deviation and equidistribution arguments are no longer available. A central part of our approach is therefore to restore compactness in a way that remains compatible with the dynamics: in the discrete-time setting, we compactify the phase space itself, whereas in the continuous-time setting we work directly with a compactification of the space of invariant measures.

The topology of these compactifications plays an essential role. It allows us to keep track of the escape of mass and to analyze thermodynamic quantities at infinity, such as pressure at infinity. Accordingly, we identify suitable classes of test functions adapted to these topologies, both in the discrete   and continuous time settings. Within this framework, one can, for instance, establish a dual variational principle for suspension semi-flows; see Theorem~\ref{thm:dual}.

We apply our large-deviation and equidistribution results for periodic points and periodic orbits to a class of interval maps, obtaining the equidistribution of empirical measures supported on finite orbits of boundary points. A particularly important example is the Gauss map; in this case, boundary points correspond to rational numbers, and our results yield the equidistribution of their finite Gauss orbits; see Section~\ref{sec:interval}.

\subsection{Countable Markov shifts} 
Let $(\Sigma,\sigma)$ be a topologically mixing countable Markov shift, see Section \ref{sec:cms} for precise definitions. For a fixed symbol $a$ in the alphabet of $\Sigma$, set
\[
\text{Per}_a(n)=\{x\in [a] : \sigma^n(x)=x\}.
\]
For $x\in\text{Per}_a(n)$, define the associated periodic measure
\[
\mu_x= \frac{1}{n} \left(\delta_x+\delta_{\sigma(x)}+\ldots+\delta_{\sigma^{n-1}(x)} \right),
\]
where $\delta_z$ is the Dirac measure at ${z}$. Let $\rho$ be the metric on $\Sigma$ introduced in (\ref{eq:metric}). We denote by $\bar\Sigma$ the completion of $\Sigma$ with respect to $\rho$; this is a compact metric space and the shift map extends to the compactification. We consider the compact dynamical system $(\bar\Sigma,\bar\sigma)$ which was studied in \cite{iv2}. Denote by $\text{UC}_{b,\rho}(\Sigma)$ the space of bounded, uniformly $\rho$-continuous functions on $\Sigma$. Each $g\in\text{UC}_{b,\rho}(\Sigma)$ admits a unique continuous extension $\bar g:\bar\Sigma\to\R$.

Write $\M(\bar\sigma)$ for the space of $\bar\sigma$-invariant Borel probability measures on $\bar\Sigma$, and $P(\cdot)$ for the pressure of potentials defined on $\Sigma$. For a continuous potential $\phi:\Sigma\to\R$, define the \emph{rate function} $I_\phi:\M(\bar\sigma)\to\R$ by
\[
I_\phi(\mu)=\sup\left\{\int \bar g\, d\mu - P(\phi+g) + P(\phi) : g\in \text{UC}_{b,\rho}(\Sigma)\right\}.
\]
The function $I_\phi$ is nonnegative and lower semicontinuous. 

\begin{theorem}[Large deviation upper bound]\label{thm_ldp}
Let $(\Sigma,\sigma)$ be  a topologically mixing countable Markov shift, and  $\phi:\Sigma\to\R$ of summable variations and finite pressure. Then, for any closed subset $\mathcal{K}\subset\M(\bar\sigma)$ we have that
\[
\limsup_{n \to \infty} \frac{1}{n} \log \left(\frac{\sum_{x\in \emph{Per}_a(n),\, \mu_x\in \mathcal{K}} e^{S_n\phi(x)}}{\sum_{x\in \emph{Per}_a(n)} e^{S_n\phi(x)}}\right)
\leq  - \inf \left\{I_\phi(\nu) : \nu \in \mathcal{K} \right\}.
\]
\end{theorem}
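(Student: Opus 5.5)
The approach is the standard Kifer-type upper-bound argument: exponential Chebyshev estimates on compact pieces, glued together using compactness of $\M(\bar\sigma)$. The only place the countable-state setting enters is the asymptotics of periodic-point sums.

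\textbf{Step 1: periodic-point asymptotics.} For a mixing countable Markov shift and a potential $\psi$ of summable variations with finite pressure, the Gurevich pressure satisfies
\[
P(\psi)=\lim_{n\to\infty}\frac1n\log\sum_{x\in\text{Per}_a(n)}e^{S_n\psi(x)}
\]
(Sarig). We apply this to $\psi=\phi+g$, which requires $\phi+g$ to have summable variations. For a general $g\in\text{UC}_{b,\rho}(\Sigma)$ this can fail, so the reduction is done in Step 3. Any $g$ that is locally constant on cylinders of a fixed length has summable variations, and bounded $g$ keeps the pressure finite. For such $g$ we get
\[
\frac1n\log\frac{\sum_{\text{Per}_a(n)}e^{S_n(\phi+g)}}{\sum_{\text{Per}_a(n)}e^{S_n\phi}}\longrightarrow P(\phi+g)-P(\phi).
\]

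\textbf{Step 2: Chebyshev bound on a neighbourhood.} Fix a locally constant bounded $g$ and $\nu\in\mathcal K$. Since $\bar g$ is continuous on $\bar\Sigma$, the set
\[
U=\Bigl\{\mu:\int\bar g\,d\mu>\int\bar g\,d\nu-\delta\Bigr\}
\]
is an open neighbourhood of $\nu$ in $\M(\bar\sigma)$. For $x\in\text{Per}_a(n)$ we have $S_ng(x)=n\int g\,d\mu_x$. Hence, for $\mu_x\in U$,
\[
e^{S_n\phi(x)}\le e^{S_n(\phi+g)(x)}\,e^{-n(\int\bar g\,d\nu-\delta)}.
\]
Summing over $x$ and using Step 1 gives
\[
\limsup_{n\to\infty}\frac1n\log(\text{ratio restricted to }U)\le-\Bigl(\int\bar g\,d\nu-P(\phi+g)+P(\phi)\Bigr)+\delta .
\]

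\textbf{Step 3: reduction to locally constant test functions.} We must show that $I_\phi$ equals the same supremum taken only over locally constant bounded $g$. Given $g\in\text{UC}_{b,\rho}$, uniform $\rho$-continuity together with the definition of $\rho$ in (\ref{eq:metric}) gives, for each $\epsilon>0$, an $N$ such that $g$ varies by less than $\epsilon$ on every $N$-cylinder. If, as I expect, this holds uniformly over all $N$-cylinders including the "large symbol" ones, then picking a value on each $N$-cylinder produces a locally constant $g_\epsilon$ with $\|g-g_\epsilon\|_\infty\le\epsilon$.

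Pressure is $1$-Lipschitz in sup norm, and $|\int\bar g\,d\mu-\int\bar g_\epsilon\,d\mu|\le\epsilon$ after extension to $\bar\Sigma$. So the two suprema coincide. This step, checking compatibility of the metric $\rho$ with cylinders so that uniform approximation by locally constant functions holds, is the main technical point. Alternatively, it suffices to approximate $g$ in sup norm by functions of summable variation, since that is all Step 1 uses.

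\textbf{Step 4: compactness and gluing.} Let $\beta=\inf_{\mathcal K}I_\phi$, handling $\beta=\infty$ with $M$ in place of $\beta$. For each $\nu\in\mathcal K$ choose a locally constant $g_\nu$ with
\[
\int\bar g_\nu\,d\nu-P(\phi+g_\nu)+P(\phi)>\min(\beta,M)-\delta,
\]
and let $U_\nu$ be the corresponding neighbourhood from Step 2. The set $\mathcal K$ is closed in the compact space $\M(\bar\sigma)$ (compact because $\bar\Sigma$ is compact), so finitely many $U_{\nu_i}$ cover $\mathcal K$.

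The ratio restricted to $\mathcal K$ is at most the sum over $i$ of the ratios restricted to $U_{\nu_i}$. The $\limsup$ of $\frac1n\log$ of a finite sum is the maximum of the individual $\limsup$s. This gives the bound $-\min(\beta,M)+2\delta$, and letting $\delta\to0$ and $M\to\infty$ completes the proof.
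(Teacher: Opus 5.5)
Your architecture is the paper's: cover $\mathcal K$ by the open sets $\{\eta:\int\bar g\,d\eta-P(\phi+g)+P(\phi)>\beta-\delta\}$, extract a finite subcover by compactness of $\M(\bar\sigma)$, apply the exponential Chebyshev bound using $S_ng(x)=n\int g\,d\mu_x$, and control $\frac1n\log Z_n(\phi+g)$ by approximation. The Chebyshev estimate of Step 2 and the gluing of Step 4 are correct whenever the test function lies in $\mathrm{UC}_{b,\rho}(\Sigma)$ and Step 1 is available for it. Your truncation at $M$ also handles $\beta=\infty$, which the paper leaves implicit.

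The flaw is in Step 3, and it is not the one you anticipate. The uniformity you worry about is automatic: two points in the same $N$-cylinder satisfy $\rho(x,y)\le\sum_{n>N}2^{-n}=2^{-N}$. So $g$ oscillates by less than $\epsilon$ on every $N$-cylinder as soon as $2^{-N}$ is below its modulus of continuity. What fails is that a function constant on $N$-cylinders is in general \emph{not} uniformly $\rho$-continuous. For example, the indicator of $\bigcup_{k\text{ even}}[k]$ is locally constant, yet points with first symbols $k$ and $k+1$ are $\rho$-close. Likewise, one arbitrarily chosen value of $g$ per $N$-cylinder only gives $|g_\epsilon(x)-g_\epsilon(y)|\le 2\epsilon+|g(x)-g(y)|$, so $g_\epsilon$ need not extend continuously to $\bar\Sigma$. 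Then $\int\bar g_\epsilon\,d\mu$ is undefined for $\mu$ charging $\bar\Sigma\setminus\Sigma$. The sets $U_\nu$ built from locally constant $g_\nu$ in Step 4 are not open in $\M(\bar\sigma)$, so the finite-subcover argument cannot be run. The claimed equality of the two suprema is not even meaningful on all of $\M(\bar\sigma)$.

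The repair is to decouple the two roles that $g$ plays, which your closing ``alternatively'' remark almost does. Keep $g\in\mathrm{UC}_{b,\rho}(\Sigma)$ itself in $U$ and in Step 4, so that openness comes from the continuity of $\bar g$. Use a sup-norm approximant $g_\epsilon$ of summable variations only in Step 1. Since $|S_ng-S_ng_\epsilon|\le n\epsilon$ and pressure is $1$-Lipschitz in the sup norm,
\[
\limsup_n\tfrac1n\log Z_n(\phi+g)\le P(\phi+g_\epsilon)+\epsilon\le P(\phi+g)+2\epsilon .
\]
This is all the upper bound needs; the denominator only requires Sarig's limit for $\phi$ itself. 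In this role your locally constant $g_\epsilon$ is perfectly adequate.

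This is precisely the paper's Lemma~\ref{lem:c0}. There the approximants are $\rho$-Lipschitz: dense in $\mathrm{C}(\bar\Sigma)$ and of summable variations by Lemmas~\ref{lem:lip} and~\ref{lem:appr}, hence in $\mathrm{UC}_{b,\rho}(\Sigma)$ anyway. If you want locally constant test functions that do lie in the class, take polynomials in $1/x_1,\dots,1/x_N$. They depend only on the first $N$ coordinates and are $\rho$-Lipschitz. They are dense in $\mathrm{C}(\bar\Sigma)$ by Stone--Weierstrass, since the coordinate functions $1/x_n$ (with $1/\infty=0$) separate points of $\bar\Sigma$.
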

Define
\[
s_\infty(\phi)=\inf\{t : P(t\phi)<\infty\}.
\]
Throughout the remainder of this section, assume that $\phi$ has summable variations, $\sup\phi<\infty$, $P(\phi)<\infty$ and $s_\infty(\phi)<1$. In Lemma~\ref{lem:l2} we establish that
\[
I_\phi(\mu)=P(\phi)-h_\mu(\sigma)-\int \phi\, d\mu,
\]
for every $\mu\in\M(\sigma)$ such that $\int \phi\, d\mu>-\infty$.

If $\phi$ is \emph{strongly positive recurrent} (SPR), meaning $P_\infty(\phi)<P(\phi)$, where $P_\infty(\cdot)$ denotes the \emph{pressure at infinity} (see Section~\ref{sec:cms} for precise definitions), then $\phi$ admits a unique equilibrium state $\mu_\phi$ (see \cite[Theorem 3]{sa2}, \cite[Theorem 1.1]{bs}, \cite[Theorem 1.4]{v}).  Furthermore, $I_\phi(\mu)=0$ if and only if $\mu=\mu_\phi$ (see Lemma~\ref{lem:=0}). On the other hand, we prove in Lemma \ref{lem:=01} that if $\phi$ does not have an equilibrium state, then $I_\phi(\mu)=0$  if and only if $\mu$ is the atomic measure $\delta_{\overline{\infty}}$. Observe that this measure can be canonically identified with the zero measure in the space of $\sigma-$invariant subprobabilites $\M_{\le 1}(\sigma)$.  We note that if $\K$  does not contain the unique measure $\mu$ such that $I_\phi(\mu)=0$, then $\inf_{\mu\in \K} I_\phi(\mu)>0$. Combining these facts with Theorem \ref{thm_ldp} yield the following equidistribution result.

\begin{theorem}[Equidistribution of periodic measures]\label{thm_equi}
Let $(\Sigma,\sigma)$  be a topologically mixing countable Markov shift and  $\phi:\Sigma\to\R$ of summable variations with $\sup\phi<\infty$, $P(\phi)<\infty$ and $s_\infty(\phi)<1$.  Let $(A_n)_n$ be a sequence $A_n\subset \emph{Per}_a(n)$ satisfying
$$\lim_{n\to\infty}\frac{1}{n}\log \left(\sum_{x\in A_n} e^{S_n\phi(x)}\right)=P(\phi).$$
Define $\mu_n=\frac{1}{\sum_{x\in A_n} e^{S_n\phi(x)}}\sum_{x\in A_n} e^{S_n\phi(x)}\,\mu_x.$
\begin{enumerate}
    \item Suppose that  $P_\infty(\phi)<P(\phi)$. Then, the sequence $(\mu_n)_n$ converges in the weak$^*$ topology to the equilibrium state of $\phi$. 
    \item Suppose that $\phi$ does not have an equilibrium state.  Then, the sequence $(\mu_n)_n$ converges on cylinders to the zero measure.
\end{enumerate}

\end{theorem}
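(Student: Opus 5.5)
We deduce the equidistribution from the large deviation upper bound of Theorem~\ref{thm_ldp} together with compactness of $\M(\bar\sigma)$. Let $\mu^*$ denote the unique zero of $I_\phi$. In case (a) this is $\mu_\phi$, by Lemma~\ref{lem:=0}. In case (b) it is $\delta_{\overline{\infty}}$, by Lemma~\ref{lem:=01}. We regard each $\mu_n$ as an element of $\M(\bar\sigma)$, via pushforward under the inclusion $\Sigma\hookrightarrow\bar\Sigma$; each $\mu_x$ is a $\sigma$-invariant probability on $\Sigma$. Since $\M(\bar\sigma)$ is compact and metrizable, it suffices to show that every weak$^*$ limit point of $(\mu_n)$ in $\M(\bar\sigma)$ equals $\mu^*$.

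Fix an open neighbourhood $U$ of $\mu^*$ in $\M(\bar\sigma)$ and set $\K=\M(\bar\sigma)\setminus U$, which is closed. Since $\K$ does not contain $\mu^*$, the remark preceding the theorem gives $\delta:=\inf_{\K} I_\phi>0$. This rests on lower semicontinuity of $I_\phi$ and compactness of $\K$: a minimiser exists and is not $\mu^*$, so its value is positive. Theorem~\ref{thm_ldp} then yields, for large $n$,
\[
\sum_{x\in \text{Per}_a(n),\,\mu_x\in\K} e^{S_n\phi(x)}\le e^{-n\delta/2}\sum_{x\in \text{Per}_a(n)} e^{S_n\phi(x)}.
\]
We next need that $\frac1n\log\sum_{x\in\text{Per}_a(n)}e^{S_n\phi(x)}\to P(\phi)$, or at least that its limsup is at most $P(\phi)$. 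This is the Gurevich definition of pressure for a topologically mixing shift with summable variations; it holds along all $n$ thanks to mixing. Combining this with the hypothesis on $A_n$, the weight of $\mu_n$ carried by $\{x\in A_n:\mu_x\in\K\}$ is at most $e^{n(P(\phi)-\delta/2+o(1))}/e^{n(P(\phi)+o(1))}\to0$. Hence $\mu_n=(1-\epsilon_n)\nu_n+\epsilon_n\kappa_n$, where $\epsilon_n\to0$, $\nu_n$ is a convex combination of $\mu_x\in U$, and $\kappa_n$ is a probability measure.

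To conclude, we cannot pass directly from "mostly in $U$" to convergence, because $U$ need not be convex. Instead we take $U$ to be a basic convex neighbourhood $\{\mu:|\int\bar g_i\,d\mu-\int\bar g_i\,d\mu^*|<\eta,\ i=1,\dots,k\}$ with continuous $\bar g_i$ on $\bar\Sigma$. Such sets are convex and form a neighbourhood base. Then $\nu_n\in U$, and because $\epsilon_n\to0$ with $\bar g_i$ bounded, $|\int\bar g_i\,d\mu_n-\int\bar g_i\,d\mu^*|<2\eta$ for large $n$. This gives $\mu_n\to\mu^*$ in $\M(\bar\sigma)$.

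It remains to translate this convergence back to $\Sigma$. In case (a), $\mu^*=\mu_\phi$ is supported on $\Sigma$ and the $\mu_n$ are probabilities on $\Sigma$, so convergence in $\M(\bar\sigma)$ gives weak$^*$ convergence on $\Sigma$. Indeed, by the portmanteau theorem, for $f\in C_b(\Sigma)$ no mass escapes since $\mu^*(\bar\Sigma\setminus\Sigma)=0$ gives tightness; alternatively one checks convergence on cylinders, which is enough for probabilities. In case (b), cylinder indicators $1_{[w]}$ belong to $\text{UC}_{b,\rho}$ and their extensions to $\bar\Sigma$ are continuous, since cylinders are clopen in $\bar\Sigma$. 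Therefore $\mu_n([w])\to\delta_{\overline{\infty}}(\overline{[w]})=0$, which is convergence on cylinders to zero.

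I expect the main obstacle to be this last identification step. One has to verify that the cylinder sets' closures in $\bar\Sigma$ are clopen and avoid $\overline\infty$, and in case (a) that weak$^*$ convergence in $\bar\Sigma$ to a measure giving full mass to $\Sigma$ implies weak$^*$ convergence in $\Sigma$ against all bounded continuous functions. A secondary point is the convergence of the periodic-point partition function along every $n$, which uses topological mixing.
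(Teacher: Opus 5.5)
Your proposal is correct and follows essentially the same route as the paper. Both combine the upper bound of Theorem~\ref{thm_ldp}, positivity of $\inf_{\K} I_\phi$ away from the unique zero ($\mu_\phi$ by Lemma~\ref{lem:=0}, or $\delta_{\bar\infty}$ by Lemma~\ref{lem:=01}), the growth hypothesis on $A_n$ to make the bad weight exponentially small, and convex slab neighbourhoods $\{\eta:|\int \bar g\,d\eta-\int \bar g\,d\mu^*|<\epsilon\}$. For transferring back to $\Sigma$, your second option (testing against cylinder indicators, which lie in $\mathrm{UC}_{b,\rho}(\Sigma)$, and then applying Remark~\ref{rem:weak=cyl}) is exactly what the paper does; it is cleaner than the ``tightness'' phrasing, which is really a closed-set portmanteau argument.
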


We emphasize that no combinatorial assumptions on the shift, such as the BIP condition (see Section~\ref{sec:thm}) or finite reducibility \cite{mubook}, are imposed, we only assume topologically mixing. Theorem~\ref{thm_ldp} applies in broad generality, both for the shift space and the potential. Theorem~\ref{thm_equi}, on the other hand,  holds for arbitrary shifts under suitable assumptions on the potential.

Related results, obtained under different assumptions and by different methods, were proved by Takahashi \cite{ta1,ta2}; see also \cite{fl}. Takahashi established level-$2$ large-deviation results for countable Markov shifts, first under the BIP condition \cite{ta1}, and later for arbitrary topologically mixing countable Markov shifts by means of inducing \cite{ta2} (cases where there exists an equilibrium state). In contrast, our approach works directly on a compactification of the phase space and does not require the BIP condition or finite reducibility.

\subsection{Suspension flows over countable Markov shifts} Let $(\Sigma,\sigma)$ be a topologically mixing countable Markov shift, and  $\tau:\Sigma\to\R$ a function with summable variations, satisfying $\inf\tau>0$. Denote by $(Y,\Theta)$, with $\Theta=(\theta_t)_{t\ge 0}$, the suspension semi-flow over $(\Sigma,\sigma)$ with roof function $\tau$ (see Section~\ref{sec:sus} for details). In what follows we will always assume $(Y,\Theta)$ to have finite  entropy.

We say that $y \in Y$ is a periodic point if $\theta_L(y) = y$ for some $L>0$. For $a\in\N$, $0<T_1<T_2$ define
$$\text{FPer}_a(T_1,T_2)=\{(x;s)\in[a]\times [T_1,T_2]:\theta_s(x,0)=(x,0)\}.$$
This set can be identified with that  of periodic orbits starting in $[a]\times \{0\}$ with periods in $[T_1,T_2]$. Also consider the set of all periodic orbits of length at most $T$
$$\text{FPer}_a(T)=\{(x;s)\in[a]\times [0,T]:\theta_s(x,0)=(x,0)\}.$$

Let $\M(\Theta)$ denote the space of $\Theta$-invariant probability measures on $Y$, endowed with the weak$^\ast$ topology. To each periodic point $y$, associate the measure $\nu_y\in\M(\Theta)$ defined  by
$$\int f  d\nu_y = \frac{1}{L} \int_0^L f(\theta_t(y))  dt,$$
for any continuous function $f:Y\to\R$, where $\theta_L(y)=y$. If $y = (x,0)\in Y$ is a periodic point, we also denote the corresponding periodic measure by $\nu_x$. 
 
Let $\M_{\le 1}(\Theta)$ be the space of $\Theta$-invariant sub-probability measures, endowed with the cylinder topology  (see Section \ref{sm} for precise definitions). Under our assumptions, $\M_{\le 1}(\Theta)$ is a compact metric space (see \cite[Theorem 1.6]{v}). In Section~\ref{sec:test}, we introduce a space of test functions $\text{C}_0(Y)$ for this topology; for every $f\in\text{C}_0(Y)$, the map $\nu \mapsto \int f \, d\nu$ is continuous with respect to the cylinder topology on $\M_{\le 1}(\Theta)$ (see Proposition~\ref{prop:test}).

Denote by $\mathrm{C}_b(Y)$ the space of bounded and continuous functions on $Y$ and by $P_\Theta(\cdot)$ the pressure on the flow. For $f\in \mathrm{C}_b(Y)$ 
the \emph{rate function} $I_f:\M_{\leq 1}(\Theta) \to [0,\infty)$ is defined by
\begin{equation*}
I_f(\nu)= \sup \left\{ \int g\, d \nu -P_\Theta(f+g)  +P_\Theta(f) : g \in \text{C}_0(Y)	\right\}.
\end{equation*}
The function $I_f$ is non-negative and lower semicontinuous. Define $\Delta_f:\Sigma\to\R$  by $$\Delta_f(x)=\int_0^{\tau(x)}f(\theta_s(x,0))ds.$$  

The next result is a level-2 large deviation upper bound for finite entropy suspension semi-flows with regular roof function bounded away from zero  and bounded potentials. 

\begin{theorem}\label{thm_ldp2} Let $(\Sigma,\sigma)$ be a topologically mixing countable Markov shift and  $\tau:\Sigma\to\R$ a function of summable variations satisfying $\inf\tau>0$. Denote by  $(Y,\Theta)$ the associated suspension semi-flow and assume that it has finite entropy. Let $f\in \mathrm{C}_b(Y)$ be such that  $\Delta_f$ has summable variations. Let $a\in \N$, $d>0$ and $\mathcal{K}$ be a closed subset of $\M_{\le 1}(\Theta)$. Then,
\begin{align*}
\limsup_{T \to \infty} \frac{1}{T} \log  &\left(\frac{\sum_{(x;s)\in \mathrm{FPer}_a(T-d,T), \nu_{x}\in \mathcal{K}}\exp\big(\int_0^{s}f(\theta_t(x,0))dt\big)}{\sum_{(x;s)\in \mathrm{FPer}_a(T-d,T)}\exp\big(\int_0^{s}f(\theta_t(x,0))dt\big)}\right) \leq - \inf \left\{I_f(\nu) : \nu \in \K \right\}.
\end{align*}
If in addition $P_{\Theta}(f)\ge0$ then we can replace $\mathrm{FPer}_a(T-d,T)$ with $\mathrm{FPer}_a(T)$.  
\end{theorem}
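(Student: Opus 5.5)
The plan is to follow the classical Kifer/Young exponential-Chebyshev argument, carried out on the compact space $\M_{\le 1}(\Theta)$ with the test class $\mathrm{C}_0(Y)$ of Proposition~\ref{prop:test}. Compactness will be used exactly once, to reduce from a general closed set to finitely many open half-spaces.

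\textbf{Step 1: exponential Chebyshev bound.} Fix $g\in \mathrm{C}_0(Y)$ and write $Z_T(f)=\sum_{(x;s)\in \mathrm{FPer}_a(T-d,T)}\exp\big(\int_0^s f(\theta_t(x,0))\,dt\big)$. For a periodic point $(x;s)$ we have $\int_0^s g(\theta_t(x,0))\,dt = s\int g\,d\nu_x$, and $s\in[T-d,T]$. Hence on the set $\{\int g\,d\nu_x>c\}$,
\[
\exp\Big(\int_0^s f\Big)\le e^{-c s}\exp\Big(\int_0^s (f+g)\Big).
\]
Since $g$ is bounded, $|cs-cT|\le |c|d$. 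Therefore
\[
\frac{\sum_{\nu_x\in\{\int g>c\}}e^{\int_0^s f}}{Z_T(f)}\le e^{-cT+|c|d}\,\frac{Z_T(f+g)}{Z_T(f)}.
\]

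\textbf{Step 2: periodic-orbit growth.} I need
\[
\lim_{T\to\infty}\tfrac1T\log Z_T(h)=P_\Theta(h)
\]
for $h=f$ and $h=f+g$. I would reduce this to the base via the Abramov-type identity $P_\Theta(h)=\inf\{t: P(\Delta_h-t\tau)\le 0\}$, together with Sarig's Gurevich-pressure characterization of $P(\Delta_h-t\tau)$ through periodic points in $[a]$. This uses that $\Delta_h$ and $\tau$ have summable variations; for $h=f+g$ this requires $\Delta_g$ to have summable variations, which I expect the definition of $\mathrm{C}_0(Y)$ to guarantee, or which I would first arrange by approximation.

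The window $[T-d,T]$ is handled by a standard Tauberian/counting argument. For the upper bound one only needs $\limsup\le P_\Theta(h)$: sum over $n$ the level-$n$ periodic points with $S_n\tau\in[T-d,T]$, weight by $e^{-t S_n\tau}$ with $t$ slightly larger than $P_\Theta(h)$, and use finite entropy of the flow (together with $\inf\tau>0$, so $n\le T/\inf\tau$) to control the number of levels. For the lower bound on $Z_T(f)$, use mixing and a specific periodic orbit family to get $\liminf\ge P_\Theta(f)$. This lower bound on the denominator is the delicate part and is where I expect the main obstacle: the denominator must not be exponentially small, including when $P_\Theta(f)$ is approached only through high levels. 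If needed, I would first prove a lower bound with a window $d'$ and then absorb the difference, since $\inf\tau>0$ and mixing let one fill gaps of bounded length.

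\textbf{Step 3: compactness and the infimum.} Steps 1 and 2 give, for every half-space $U_{g,c}=\{\nu:\int g\,d\nu>c\}$,
\[
\limsup_{T\to\infty}\tfrac1T\log(\cdot)\le -\big(c-P_\Theta(f+g)+P_\Theta(f)\big).
\]
Let $\alpha<\inf_{\K}I_f$. For each $\nu\in\K$, choose $g$ with $\int g\,d\nu-P_\Theta(f+g)+P_\Theta(f)>\alpha$, and take $c$ slightly below $\int g\,d\nu$. Continuity of $\nu\mapsto\int g\,d\nu$ in the cylinder topology (Proposition~\ref{prop:test}) makes $U_{g,c}$ an open neighbourhood of $\nu$. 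Since $\K$ is closed in the compact space $\M_{\le 1}(\Theta)$, it is compact, so finitely many such neighbourhoods cover $\K$. The maximum of finitely many exponential rates is the largest rate, so the $\limsup$ is at most $-\alpha$. Letting $\alpha\uparrow\inf_\K I_f$ gives the theorem.

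\textbf{Step 4: replacing the window by $\mathrm{FPer}_a(T)$.} Assume $P_\Theta(f)\ge0$. Cover $[0,T]$ by $O(T)$ windows $[k-d,k]$ with $k\le T$. The numerator is at most $O(T)\max_{k\le T}e^{k(P_\Theta(f+g)-c)+o(k)}$. The denominator for $\mathrm{FPer}_a(T)$ is at least the window sum at $T$, which is $e^{TP_\Theta(f)+o(T)}$. When $P_\Theta(f+g)-c\ge 0$, the maximum over $k$ is attained near $T$, giving the same rate. When $P_\Theta(f+g)-c<0$, the numerator is $e^{o(T)}$, so the rate is at most $-P_\Theta(f)$. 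This loses nothing only if $-P_\Theta(f)\le -\big(c-P_\Theta(f+g)+P_\Theta(f)\big)$, i.e.\ if $P_\Theta(f+g)-c\le 0$, which holds in this case. The hypothesis $P_\Theta(f)\ge 0$ is exactly what keeps the denominator from being dominated by short orbits. After this, rerun the compactness step of Step 3.
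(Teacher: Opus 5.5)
Your Steps 1--3 are the paper's argument: an exponential Chebyshev bound on the open sets $\{\nu:\int g\,d\nu>c\}$ (the paper's $\mathcal V_g$), a finite subcover of the compact set $\K\subset\M_{\le1}(\Theta)$, and the growth rates of the window sums for $f$ and $f+g$. You flag the lower bound on the denominator as the delicate part, but it is not an obstacle: $\frac1T\log Z_T(f)\to P_\Theta(f)$ is exactly the periodic-orbit formula for the flow pressure, which the paper simply quotes. The approximation for $f+g$, on the other hand, is not optional. Membership in $\mathrm{C}_0(Y)$ only gives $\Delta_g\in\mathrm{UC}^0_{b,\rho}(\Sigma)$, not summable variations. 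The paper handles this in Lemma~\ref{lem:presC0}: it approximates $\Delta_g$ uniformly by some $\Delta_{g_0}$ with summable variations (Remark~\ref{rem:ex}), and uses that an orbit of flow length at most $T$ makes at most $T/\inf\tau$ returns to the base.

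The genuine gap is Step 4. Your final inequality is reversed. Indeed, $-P_\Theta(f)\le-\bigl(c-P_\Theta(f+g)+P_\Theta(f)\bigr)$ is equivalent to $c\le P_\Theta(f+g)$, not to $c\ge P_\Theta(f+g)$. So in the case $c>P_\Theta(f+g)$ your estimate only gives the rate $-P_\Theta(f)$, which is strictly weaker than the required $-(c-P_\Theta(f+g)+P_\Theta(f))$. This case is forced whenever $\inf_\K I_f>P_\Theta(f)$: every pair $(g,c)$ in your cover then has $c-P_\Theta(f+g)>\inf_\K I_f-P_\Theta(f)>0$.

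This is a defect of the cumulative claim itself, not of your bookkeeping. Suppose $\K$ contains $\nu_{x_0}$ for a periodic $x_0\in[a]$ of flow period $s_0$. For all $T\ge s_0$ the numerator over $\mathrm{FPer}_a(T)$ contains the fixed positive term $e^{s_0\int f\,d\nu_{x_0}}$. The denominator is $e^{TP_\Theta(f)+o(T)}$ by Remark~\ref{rem:positivepressure}, so the $\limsup$ is at least $-P_\Theta(f)$. For $\tau\in\Psi\cap\cR$, Corollary~\ref{cor:R} gives $I_f(\nu_{x_0})=P_\Theta(f)-\int f\,d\nu_{x_0}$, and this exceeds $P_\Theta(f)$ as soon as $\int f\,d\nu_{x_0}<0$. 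Concretely, take the Gauss suspension of Section~\ref{sec:interval} with $f\equiv-\delta$, $0<\delta<1$, $x_0=\bar 1$ and $\K=\{\nu_{x_0}\}$. All hypotheses hold with $P_\Theta(f)=1-\delta\ge 0$. The claimed rate is $-1$, but the actual rate is $-(1-\delta)$.

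What your window-by-window argument does prove, once the sign is corrected, is the bound $-\min\{P_\Theta(f),\inf_\K I_f\}$. This agrees with the stated bound whenever $\inf_\K I_f\le P_\Theta(f)$. That includes $f=0$, the case used in Section~\ref{sec:interval}, since the variational principle and density of $\M(\Theta)$ give $I_0\le P_\Theta(0)$ on $\M_{\le1}(\Theta)$. Note that the paper's own one-line justification of the cumulative claim (``standard estimates on exponential growth, as in Remark~\ref{rem:positivepressure}'') only compares the total sums and does not address this point either.
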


It will be convenient to introduce the following class of roof functions. Define  
 $$\Psi=\{\tau \colon \Sigma \to \mathbb{R}: \tau \text{ has summable variations, }\inf \tau > 0, \text{ and } P(-\tau)<\infty\}.$$
 Note that  $(Y,\Theta)$ has finite entropy if and only if there exists $t_0 >0$ such that $P(-t_0\tau)<\infty$. In particular, if $\tau\in\Psi$, then $(Y,\Theta)$ has finite topological entropy, and this is an equivalence up to a rescaling factor. 
 
Under stronger assumptions on the roof function, the rate function can be described more explicitly. We say that a function 
$\tau:\Sigma \to \mathbb{R}$ belongs to the class $\cR$ if it is bounded away from zero, has summable variations, and satisfies 
\[\lim_{k\to\infty}\inf_{x\in[k]}\tau(x)=\infty.\]
  This class of roof functions was considered in \cite{iv}, where it is shown that for the corresponding suspension flow,  $\M_{\le 1}(\Theta)$ is compact with respect to the cylinder topology.  If $\tau \in \Psi\cap \cR$,  the dual variational principle for the suspension semi-flow holds (see Theorem \ref{thm:dual}). Equivalently, for every $\nu\in\M(\Theta)$, we have
  \[ I_f(\nu)=P_\Theta(f)- h_\nu(\Theta)-\int f d\nu.\]
These results apply, in particular, to countable Markov shifts with the BIP property when $\tau\in \Psi$ has finite first variation (see Lemma \ref{lem:bip}).

A potential  $f:Y \to \mathbb{R}$ is said to be \emph{strongly positive recurrent} (SPR) if $P_{\Theta,\infty}(f)<P_{\Theta}(f)$, where $P_{\Theta,\infty}( \cdot)$ denotes the \emph{pressure at infinity} for the flow (see Section \ref{sec:pre_imfty_flow} for precise definitions). If $f$ is SPR and $\Delta_f$ has summable variations, then $f$ admits a unique equilibrium state (see \cite[Theorem 8.1]{v} and \cite[Theorem 3.5]{ijt}). The next result establishes the equidistribution toward the equilibrium state for large sets of weighted periodic orbits.

\begin{theorem} \label{thm_equi2}
 Let $(\Sigma,\sigma)$ be a topologically mixing countable Markov shift  and  $\tau\in\Psi$. Denote by  $(Y,\Theta)$ the associated suspension semi-flow, which has finite entropy. Let  $f\in \mathrm{C}_b(Y)$ be a SPR potential such that $\Delta_f$ has summable variations. Let $a\in \N$, $d>0$. Then,
 \begin{enumerate}
\item[(a)] For $A_T\subset \emph{FPer}_a(T-d,T)$ such that
$$\lim_{T\to\infty}\frac{1}{T}\log\sum_{(x;s)\in A_T}\exp\bigg(\int_0^{s}f(\theta_t(x,0))dt\bigg)=P_\Theta(f),$$
the sequence of measures $$\nu_T=\frac{1}{\sum_{(x;s)\in A_T}\exp\big(\int_0^{s}f(\theta_t(x,0))dt\big)}\sum_{(x;s)\in A_T}\exp\bigg(\int_0^{s}f(\theta_t(x,0))dt\bigg)\nu_x$$
converges in the weak$^\ast$ topology to the unique equilibrium state of $f$ as $T\to\infty$. 
\item[(b)] There are sets of periodic orbits $A'_T\subset \emph{FPer}_a(T-d,T)$ that
$$\lim_{T\to\infty}\frac{\sum_{(x;s)\in A'_T}\exp(\int_0^sf(\theta_t(x,0))dt)}{\sum_{(x;s)\in \mathrm{FPer}_a(T-d,T)}\exp(\int_0^sf(\theta_t(x,0))dt)}=1,$$
such that for every sequence $(x_n;s_n)\in A'_{T_n}$, with $T_n\nearrow\infty$, we have $(\nu_{x_n})_n$ converges in the weak$^\ast$ topology to the equilibrium state of $f$.
\end{enumerate}
If, in addition, $P_{\Theta}(f)\ge0$ then in the above results we can replace $\mathrm{FPer}_a(T-d,T)$ with $\mathrm{FPer}_a(T)$.
\end{theorem}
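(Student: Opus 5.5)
Part (a) will follow from the large deviation upper bound of Theorem~\ref{thm_ldp2} combined with a uniqueness statement for the zeros of $I_f$. Let $\mu_f$ be the unique equilibrium state of $f$, which exists because $f$ is SPR and $\Delta_f$ has summable variations. The first step is to show that $I_f(\nu)=0$ for $\nu\in\M_{\le1}(\Theta)$ forces $\nu=\mu_f$. Conversely, $I_f(\mu_f)=0$ follows from the variational principle, since $P_\Theta(f+g)\ge h_{\mu_f}+\int f\,d\mu_f+\int g\,d\mu_f$. For uniqueness, the plan mirrors Lemma~\ref{lem:=0} in the discrete setting. Write a sub-probability $\nu=\lambda\nu'$ with $\nu'\in\M(\Theta)$ and $\lambda\in[0,1]$. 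The aim is an upper-semicontinuity/convexity estimate at infinity: testing with $g\in \mathrm{C}_0(Y)$ of the form $t g_0$ and letting $t\to 0$ gives a Gateaux-derivative computation. We then exploit that $P_\Theta(f+tg)$ is differentiable at $t=0$, with derivative $\int g\,d\mu_f$, whenever $f$ is SPR. This is via Abramov/inducing: $P_\Theta(f+tg)$ is the zero of $s\mapsto P(\Delta_{f+tg}-s\tau)$, and SPR for $f$ transfers to SPR of $\Delta_f-P_\Theta(f)\tau$ on the base, where the pressure is analytic. Hence $I_f(\nu)=0$ gives $\int g\,d\nu=\int g\,d\mu_f$ for all $g\in\mathrm{C}_0(Y)$. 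Since $\mathrm{C}_0(Y)$ separates points of $\M_{\le1}(\Theta)$ in the cylinder topology (Proposition~\ref{prop:test} and the construction of Section~\ref{sec:test}), we get $\nu=\mu_f$. I expect this differentiability step, and the handling of the escape of mass ($\lambda<1$), to be the main obstacle. Should differentiability only be available for bounded locally constant $g$, I would first restrict to such $g$ and argue that they already determine the measure.

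Given this, fix a weak$^*$ open neighbourhood $U$ of $\mu_f$ in $\M(\Theta)$. I will pass to a cylinder-open set $U'\subset \M_{\le1}(\Theta)$ with $U'\cap\M(\Theta)\subset U$ and $\mu_f\in U'$. This is possible because the cylinder topology restricted to probability measures agrees with the weak$^*$ topology at $\mu_f$; in particular, convergence in $\M_{\le1}$ to a probability measure implies tightness. Set $\K=\M_{\le1}(\Theta)\setminus U'$, which is compact. Lower semicontinuity of $I_f$ and the first step give $\delta=\inf_\K I_f>0$. Theorem~\ref{thm_ldp2} then bounds the weight of orbits with $\nu_x\in\K$ by $e^{-T\delta/2}$ times the total weight $Z_T$ over $\mathrm{FPer}_a(T-d,T)$. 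For part (a), we need $\frac1T\log Z_T\to P_\Theta(f)$. The hypothesis on $A_T$ gives the lower bound $\liminf\frac1T\log Z_T\ge P_\Theta(f)$. The upper bound comes from counting periodic orbits through $[a]$ via the base pressure, using the relation between $P(\Delta_f-s\tau)$ and $P_\Theta(f)$. Consequently the weight in $A_T$ with $\nu_x\in\K$ is $o$ of the weight of $A_T$. Since $\nu_T$ is a convex combination, for any $h\in\mathrm{C}_b(Y)$ we have $|\int h\,d\nu_T-\int h\,d\mu_f|$ at most $\sup_{U}|\int h\,d\cdot-\int h\,d\mu_f|$ plus $2\|h\|_\infty e^{-T\delta/3}$. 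Shrinking $U$ gives weak$^*$ convergence.

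For (b), take a countable decreasing basis $U_k$ of neighbourhoods of $\mu_f$, with corresponding compact complements $\K_k$ and $\delta_k>0$. The weight of $\{\nu_x\in\K_k\}$ is at most $e^{-T\delta_k/2}Z_T$ for $T\ge T_k$, where $T_k$ is increasing. Define $k(T)=\max\{k:T\ge T_k\}$, which tends to infinity, and set $A'_T=\{(x;s):\nu_x\in U_{k(T)}\}$. Its relative weight tends to $1$, and any sequence drawn from $A'_{T_n}$ converges to $\mu_f$. Finally, when $P_\Theta(f)\ge0$, the last clause of Theorem~\ref{thm_ldp2} allows the same argument with $\mathrm{FPer}_a(T)$. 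In that case we also check that $\frac1T\log$ of the total weight still tends to $P_\Theta(f)$, since summing over windows $[T-jd,T-(j-1)d]$ adds at most a factor polynomial in $T$ when $P_\Theta(f)\ge 0$.
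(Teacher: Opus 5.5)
\textbf{Verdict: gap in the derivative step.} Your overall architecture matches the paper's. You use the upper bound of Theorem~\ref{thm_ldp2}, then positivity of $\inf_{\K}I_f$ on closed sets avoiding the equilibrium state (via uniqueness of the zero, lower semicontinuity and compactness of $\M_{\le1}(\Theta)$), then the exponential comparison with the weight of $A_T$ for (a), and finally the diagonal choice of shrinking neighbourhoods for (b). The gap is the step you flag yourself: that $t\mapsto P_\Theta(f+tg)$ is differentiable at $0$ with derivative $\int g\,d\nu_f$ (your $\mu_f$) for $g\in\mathrm{C}_0(Y)$.

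\textbf{Why your justification fails.} You argue that $P(\Delta_f+t\Delta_g-s\tau)$ is analytic on the base and then apply the implicit function theorem. No such analyticity is available under these hypotheses:
\begin{enumerate}
\item $\Delta_f$ only has summable variations, so there is no spectral gap.
\item $\Delta_g\in\mathrm{UC}^0_{b,\rho}(\Sigma)$ is merely uniformly continuous.
\item The $s$-direction is the unbounded function $\tau$, so even joint differentiability in $(t,s)$ is not something you can cite.
\end{enumerate}
Your fallback, restricting to locally constant $\Delta_g$, does not help, because the obstruction comes from $\Delta_f$ and $\tau$, not from $g$. The variational principle gives only the one-sided bound $P_\Theta(f+tg)-P_\Theta(f)\ge t\int g\,d\nu_f$. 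The reverse inequality is exactly what is missing.

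\textbf{How the paper closes it (Lemma~\ref{lem:deri}).} It uses a soft argument that needs no regularity of the pressure function.
\begin{enumerate}
\item Both $P_\Theta$ and $P_{\Theta,\infty}$ move by at most $|t|\,\|g\|_\infty$ under $f\mapsto f+tg$. Hence $f+tg$ remains SPR for small $|t|$.
\item Since $\Delta_g\in\mathrm{UC}_{b,\rho}(\Sigma)$ gives the needed regularity, $f+tg$ then has an equilibrium state $\nu_t$ (Remark~\ref{rem:forder}).
\item As $t\to0$, $h_{\nu_t}(\Theta)+\int f\,d\nu_t\to P_\Theta(f)$. The SPR compactness statement in Remark~\ref{rem:forder} then gives $\nu_t\to\nu_f$ weak$^\ast$.
\item For $t>0$ the variational principle yields $\int g\,d\nu_f\le \frac{P_\Theta(f+tg)-P_\Theta(f)}{t}\le \int g\,d\nu_t$, with the inequalities reversed for $t<0$. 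Letting $t\to0$ gives the derivative.
\end{enumerate}
With this lemma in place of your analyticity claim, your worry about escape of mass disappears. The identity $\int g\,d\nu=\int g\,d\nu_f$ for all $g\in\mathrm{C}_0(Y)$ already forces $\nu=\nu_f$ in $\M_{\le1}(\Theta)$, by Lemma~\ref{prop:test}. The rest of your proof then goes through as written.
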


These  large deviation and equidistribution theorems for suspension semi-flows over countable Markov shifts, generalize to a non-compact setting previous results by Kifer \cite{kif}  and Pollicott \cite{po}.

\subsection{Equidistribution for interval maps} \label{intro:interval}

We apply our large-deviation and equidistribution results for suspension semi-flows over countable Markov shifts to obtain equidistribution results for finite orbits of boundary points of interval maps with countably many full branches accumulating at zero. More precisely, let $(d_n)_n$ be a strictly decreasing sequence of real numbers such that $d_1=1$ and $\lim_{n\to\infty}d_n=0$. Set
$I_n=(d_{n+1},d_n]$. We consider maps $T:\bigcup_{n\geq 1} I_n \to [0,1]$ such that, for each $n \in \N$, the restriction $T|_{I_n}$ is monotone and onto. We assume that $T$ is regular and expanding in the sense of Section~\ref{sec:interval}. This class includes, in particular, the Gauss map. Under these assumptions, the geometric potential $-\log |T'|$ has a unique equilibrium measure, which we denote by $\mu_1$.

Let $\B= \bigcup_{n \geq 1} T^{-n}(0)$ be the collection of boundary points corresponding to the Markov partition.  For $b\in\mathcal B$, let $\ell(b)$ denote the length of its finite orbit, and define the associated empirical measure by
\[
\mu_b=
\frac{1}{\ell(b)}
\sum_{i=0}^{\ell(b)-1}\delta_{T^i b}.
\]
For each $k\in\mathbb N$, set
\[
\mathcal B_k=
\left\{
b\in\mathcal B:
\left|\big(T^{\ell(b)}\big)'(b)\right|<k
\right\}.
\]

The following theorem, obtained as an application of our previous results, establishes the equidistribution of finite orbits of boundary points.

\begin{theorem}\label{main:interval}
Let $T:(0,1]\to [0,1]$ be an EMR interval map with superlinear branch expansion. Then there exist sets $A_k\subset \mathcal B_k$, $k\in\mathbb N$, such that
\[
\lim_{k\to\infty}\frac{\#A_k}{\#\mathcal B_k}=1,
\]
and, for every sequence $(b_k)_k$ with $b_k\in A_k$, the following hold:
\begin{enumerate}
    \item[(1)] the sequence of measures $(\mu_{b_k})_k$ converges in the weak$^\ast$ topology to the equilibrium measure $\mu_1$ of $-\log |T'|$;
    \item[(2)]
    \[
    \lim_{k\to\infty}
    \frac{1}{\ell(b_k)}
    \log \left|\big(T^{\ell(b_k)}\big)'(b_k)\right|
    =
    \int \log |T'|\,d\mu_1.
    \]
\end{enumerate}
\end{theorem}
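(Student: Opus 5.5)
We deduce Theorem \ref{main:interval} from Theorem \ref{thm_equi2}(b), applied to a suspension semi-flow in which boundary points become periodic orbits and $\log|(T^{\ell(b)})'(b)|$ becomes the period.

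\textbf{Coding and the flow.} Code $T$ by the full shift $(\Sigma,\sigma)$ on $\N$ through the branches $I_n$. Let $\pi:\Sigma\to(0,1]$ be the coding map and set $\tau=\log|T'|\circ\pi$. Under the EMR hypotheses (uniform expansion, Rényi-type distortion), $\tau$ has summable variations and $\inf\tau>0$. Superlinear branch expansion should give $\lim_{k}\inf_{[k]}\tau=\infty$ and $P(-\tau)<\infty$, so $\tau\in\Psi\cap\cR$. The full shift has BIP, so Lemma \ref{lem:bip} and the dual variational principle (Theorem \ref{thm:dual}) are available.

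\textbf{Periodic orbits and boundary points.} A boundary point $b$ with $T^{\ell(b)}b=0$ has a finite itinerary $n_1\cdots n_\ell$, where $\ell=\ell(b)$. Consider the periodic point $x=\overline{n_1\cdots n_\ell}$, shifted so that it starts with a fixed symbol $a$. Its flow period is
\[
s=S_\ell\tau(x)=\log|(T^{\ell})'(\pi x)|.
\]
By bounded distortion this equals $\log|(T^\ell)'(b)|+O(1)$, uniformly. Counting cyclic rotations, this gives a correspondence between $\mathcal B_k$ and $\mathrm{FPer}_a(\log k+C)$ up to bounded multiplicity, polynomial factors in $\ell$, and a boundary layer of width $O(1)$ in $T=\log k$.

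Take $f\equiv 0$. Then $\Delta_f=0$, and $P_\Theta(0)=h_{top}(\Theta)=\delta$, the unique root of $P(-\delta\tau)=0$. For the Gauss map $\delta=2$. Hence $P_\Theta(0)\ge0$ and the $\mathrm{FPer}_a(T)$ version applies.

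$f=0$ is SPR: superlinear expansion makes $P(-t\tau)$ finite slightly below $\delta$, and by Section \ref{sec:pre_imfty_flow} this should give $P_{\Theta,\infty}(0)<\delta$. Its equilibrium state, the measure of maximal entropy for the flow, is $\mu_{-\delta\tau}\times\mathrm{Leb}$ normalized. Here is the one issue to reconcile. The paper says the geometric potential has equilibrium $\mu_1$, so one needs $\delta=1$ or a normalization. For the Gauss map $\#\mathcal B_k\asymp k^2$ and the relevant measure is still Gauss's via the standard count, so the counting weights should correspond to $-\tau$ after reweighting by $e^{-s}$. Concretely, I would instead choose $f$ as the constant $-(\delta-1)$, so that the weights $e^{\int f}$ tilt to $P(-\tau)$ and the counting ratios stay comparable. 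The precise choice is to be checked against the definition of $\mathcal B_k$.

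\textbf{Applying Theorem \ref{thm_equi2}(b).} This yields sets $A'_T$ of full asymptotic weight such that, for any sequence $x_n\in A'_{T_n}$, $\nu_{x_n}\to\mu_\Theta$ weak$^*$. Projecting to the base, $\nu_x$ equals $\mu_x\times\mathrm{Leb}$ normalized by $\int\tau\,d\mu_x$. Integrating test functions of the form $g(x)\cdot\mathbf 1_{[0,\tau(x)]}/\tau(x)$ gives $\mu_{x_n}\to\mu_1$ on $\Sigma$. This requires care because such functions may not be in $\mathrm C_b(Y)$ on the unbounded part, but $\tau$ is bounded on finitely many cylinders and tightness is given by weak$^*$ convergence. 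Testing against $1/\tau$, suitably truncated, gives
\[
\frac{\ell}{s}=\int \frac{1}{\tau}\,d\nu_x\;\longrightarrow\;\frac{1}{\int\tau\,d\mu_1}.
\]
This is item (2), using $s=\log|(T^\ell)'(b)|+O(1)$ and $\ell\to\infty$. Pushing forward by $\pi$ gives item (1), since $\mu_b$ and $\pi_*\mu_x$ differ in a controlled way. The orbit of $b$ ends at $0$ while $\pi x$ is periodic, but their first $\ell-j$ iterates are exponentially close for large $j$, so the difference is $o(1)$ weak$^*$. Finally, set $A_k$ to be the boundary points corresponding to $A'_{\log k}$ (union over the $O(1)$ window). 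The full weight of $A'_T$ translates into $\#A_k/\#\mathcal B_k\to1$.

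\textbf{Main obstacle.} I expect the main obstacle to be the counting transfer. One must show that the bounded distortion error and the multiplicity of cyclic rotations, including the dependence on the fixed first symbol $a$ (handled by summing over a finite set of starting symbols, or by noting rotations preserve the measure), do not destroy the ratio tending to $1$. This needs the complementary weight estimate from Theorem \ref{thm_ldp2} to be exponentially small, which absorbs polynomial and $O(1)$ window losses.
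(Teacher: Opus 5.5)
Your overall structure matches the paper's proof. You use the full-shift coding, the suspension with roof $\log|T'|$, exponential large-deviation decay from Theorem~\ref{thm_ldp2} to absorb the $\pm C_0$ boundary layer in $\mathcal W(\log k-C_0)\subset\mathcal B_k\subset\mathcal W(\log k+C_0)$, Remark~\ref{rem:convprob} to pass to the base, and the exponential closeness of $T^jb$ and $\pi(\sigma^jx)$.

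\textbf{The gap: the identification of the limit measure.} The conclusion concerns the unweighted ratio $\#A_k/\#\mathcal B_k$, so all orbits in a window must get equal weight, i.e.\ $f$ constant. The limit is then the measure of maximal entropy $AK(m_{-\delta\tau})$, where $\delta=h_{top}(\Theta)$ is the zero of $t\mapsto P(-t\tau)$. Its projection is $\mu_1$ only if $\delta=1$, i.e.\ $P(-\log|T'|\circ\pi)=0$. This is a fact to be proved, not a normalization to be chosen, and you do not prove it.

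Instead you assert $\delta=2$ for the Gauss map and $\#\mathcal B_k\asymp k^2$, and both are false. Since $|(G^{\ell})'(p/q)|=q^2$, we have $\mathcal B_k=\{p/q:q<\sqrt k\}$, so $\#\mathcal B_k\asymp k$, which is growth rate $1$ in $\log k$.

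Your proposed remedy $f\equiv-(\delta-1)$ also changes nothing. For constant $f=c$ one has $\Delta_f-P_\Theta(f)\tau=c\tau-(\delta+c)\tau=-\delta\tau$, so the equilibrium state is the same. Moreover, on a window the weights $e^{cs}$ are constant up to a factor $e^{|c|d}$.

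The missing ingredient is Proposition~\ref{en_flo}(1). By Bowen's formula for EMR maps \cite[Theorem~4.2.13]{mubook}, the zero of $t\mapsto P(-t\widehat\tau)$ equals $\mathrm{HD}([0,1]\setminus\mathcal B)=1$. Also $P(-t\tau)=P(-t\widehat\tau)$, because the two functions have the same Birkhoff sums on periodic orbits.

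\textbf{Smaller points that also need repair.}

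First, $\log|T'|\circ\pi$ need not be bounded away from zero, so it is not in $\Psi$. Hypothesis (2) only controls $T^{n_0}$, and already $\inf|G'|=1$ for the Gauss map. The paper uses the cohomologous roof $\tau=\frac1{n_0}\sum_{q=0}^{n_0-1}\widehat\tau\circ\sigma^q\ge n_0^{-1}\log\xi$. It has the same sums over periodic orbits, hence the same periods.

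Second, a word not containing the symbol $a$ cannot be rotated to start with $a$, so your correspondence with $\mathrm{FPer}_a$ misses words. The paper uses Remark~\ref{rmk:bipPer} to work with $\mathrm{FPer}(T)$ on the full shift. That set is in bijection with finite words via $w\mapsto(x_w;S_{|w|}\tau(x_w))$, so no multiplicity issue arises.

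Third, for SPR the precise input is Lemma~\ref{lem:entropyinf}: $h_\infty(\Theta)=s_\infty(-\tau)\le1/\alpha<1$.

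Finally, $\ell(b_k)\to\infty$ needs an argument. It follows from discarding the exponentially negligible words with $s(w)\le\eta\log k$, together with $s(w_k)/|w_k|\to\int\tau\,dm_1<\infty$.
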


In the particular case of the Gauss map, the set $\mathcal B$ is precisely the set of rational numbers in $(0,1)$. We therefore recover the following theorem of David and Shapira~\cite[Theorem~1.1(2)]{ds}.

\begin{theorem}[David--Shapira]\label{os}
For every $q\in\mathbb N$, let
\[
B_q=
\left\{
\frac{p}{q}\in(0,1):
1\leq p<q,\ \gcd(p,q)=1
\right\}.
\]
Then there exist subsets $A_q\subset B_q$ such that $\lim_{q\to\infty}\frac{\#A_q}{\#B_q}=1,$ 
and, for every sequence $(x_q)_q$ with $x_q\in A_q$, the sequence of measures $(\mu_{x_q})_q$ converges in the weak$^\ast$ topology to the Gauss measure $\mu_G$. Moreover,
\[
\lim_{q\to\infty}
\frac{\ell(x_q)}{2\log q}
=
\frac{6\log 2}{\pi^2},
\]
where $\ell(x)$ denotes the length of the continued fraction expansion of $x\in\mathbb Q\cap(0,1)$.
\end{theorem}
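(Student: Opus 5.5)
We derive Theorem~\ref{os} from Theorem~\ref{main:interval} applied to the Gauss map $G(x)=1/x-\lfloor 1/x\rfloor$. The Gauss map is an EMR interval map with superlinear branch expansion: its branches are $I_n=(1/(n+1),1/n]$, and $|G'|\ge n^2$ on $I_n$. We first identify $\mathcal B$ with $\Q\cap(0,1)$. A point $x\in(0,1)$ satisfies $G^m(x)=0$ for some $m$ exactly when its continued fraction expansion is finite, that is, when $x$ is rational. Moreover $\ell(x)$ equals the length of the expansion $[0;a_1,\dots,a_\ell]$.

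The key computation is the derivative along a finite orbit. Write $x=p/q$ in lowest terms with $x=[0;a_1,\dots,a_\ell]$, and set $x_i=G^i(x)$. Since $G'(y)=-1/y^2$,
\[
\bigl|(G^{\ell})'(x)\bigr|=\prod_{i=0}^{\ell-1}x_i^{-2}.
\]
The classical identity $\prod_{i=0}^{\ell-1}x_i=1/q_\ell$, where $q_\ell=q$ is the denominator of the last convergent, gives $|(G^\ell)'(p/q)|=q^2$. We check this by induction using $x_{i}=[0;a_{i+1},\dots,a_\ell]$. The only care needed is the last step, where $x_{\ell-1}=1/a_\ell$ and the orbit lands exactly at $0$; the paper's convention for $\ell(b)$ and for $(T^{\ell(b)})'(b)$ must match this count, and we will check that it does.

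It follows that $\mathcal B_k=\{p/q: q^2<k\}$. In other words $\mathcal B_k=\bigcup_{q<\sqrt k}B_q$, a disjoint union of the sets $B_q$ with $\#B_q=\varphi(q)$.

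The main obstacle is passing from the cumulative sets $\mathcal B_k$ to the individual shells $B_q$. Theorem~\ref{main:interval} gives full-density good sets only relative to the whole ball $\mathcal B_k$, whose cardinality is about $3k/\pi^2$. The shell $B_q$ with $q\approx\sqrt k$ has size $\varphi(q)$, which is only about $\sqrt k$, so a negligible proportion of $\mathcal B_k$ could in principle contain the whole shell.

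We plan to avoid this by not using the full-density statement directly and instead going through its proof, which rests on Theorem~\ref{thm_ldp2} or Theorem~\ref{thm_equi2}(b). These results give exponential smallness of bad orbits in windows $\mathrm{FPer}_a(T-d,T)$. In the interval application the suspension roof is $\tau=\log|G'|$ and $T=\log k$, so a window of bounded width $d$ corresponds to $q^2\in[e^{T-d},e^T]$, which contains about $e^{T}$ rationals.

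We proceed as follows. Fix a weak$^*$ neighbourhood $U$ of $\mu_G$ and $\varepsilon>0$, and consider the bad set of $p/q$ with $\mu_{p/q}\notin U$ or with $\ell(p/q)/(2\log q)$ differing from $6\log2/\pi^2$ by more than $\varepsilon$. The large-deviation bound, with rate infimum positive by the uniqueness of the equilibrium state, shows this set has cardinality at most $e^{-\delta T}e^{T}$ among $q\le e^{T/2}$. Hence
\[
\sum_{q\le Q}\#(\mathrm{bad}\cap B_q)\le Q^{2-2\delta}.
\]
Shells with few bad points can be handled by counting: for all but a proportion $\le Q^{-\delta}$ of $q\le Q$, we have $\#(\mathrm{bad}\cap B_q)\le Q^{1-\delta/2}$, which is $o(\varphi(q))$ since $\varphi(q)\gg q/\log\log q$. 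The remaining exceptional $q$ cannot be excluded by counting alone. For those we must sharpen the argument, either by applying the estimate on windows of width $d\to0$ or by applying it to individual denominators, and this is the step where genuine work is needed.

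Once the bad sets are shown to have density zero in each $B_q$ for every $U$ and $\varepsilon$, a diagonal argument over a countable basis of neighbourhoods and a sequence $\varepsilon\to0$ produces the sets $A_q$.

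For the length limit we use part (2) of Theorem~\ref{main:interval}: $\frac{1}{\ell}\log q^2\to\int\log|G'|\,d\mu_G=\pi^2/(6\log 2)$, the Lévy constant. Inverting gives
\[
\frac{\ell(x_q)}{2\log q}\to\frac{6\log 2}{\pi^2}.
\]
Weak$^*$ convergence of $\mu_{x_q}$ to $\mu_G$ holds on $A_q$, since $\mu_G$ is the equilibrium state of $-\log|G'|$.
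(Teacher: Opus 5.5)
Your set-up matches the paper's. For the Gauss map $\mathcal B=\mathbb Q\cap(0,1)$ and $|(G^{\ell})'(p/q)|=q^2$, so $\mathcal B_k=\bigcup_{q<\sqrt k}B_q$, and part (2) of Theorem~\ref{main:interval} gives $\ell/(2\log q)\to 6\log 2/\pi^2$. The paper obtains Theorem~\ref{os} by this specialization alone.

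You are right that this only yields density-one good sets inside the cumulative sets $\bigcup_{q'<q}B_{q'}$. The paper contains no argument for passing to a single shell $B_q$ either. Your averaging step does improve matters. For fixed $U$ and $\varepsilon$, working in dyadic ranges of $q$, the bad proportion of $B_q$ tends to $0$ for all $q$ outside an exceptional set with $O(Q^{1-\delta})$ elements up to $Q$. But the theorem requires every large $q$, and for the exceptional denominators you give no argument. So there is a genuine gap exactly at the step you flagged, and the repairs you suggest cannot close it with the tools of the paper.

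Theorem~\ref{thm_ldp2} is proved for a fixed window width $d$, using only exponential growth rates of periodic-orbit sums, so nothing is available uniformly as $d\to0$. More fundamentally, no flow-time window isolates $B_q$. Write $p/q=[a_1,\dots,a_m]$ and let $x_w$ be the associated periodic point. Then $s(w)=\log|(G^m)'(x_w)|=2\log(q+q_{m-1}x_w)$, where $q_{m-1}$ is the previous convergent denominator, determined by $pq_{m-1}\equiv\pm1\pmod q$. Hence $B_q$ is spread over flow times in $(2\log q,2\log q+2\log 2)$. That range contains $q^{2+o(1)}$ periodic orbits, while $\#B_q=\varphi(q)<q$, so a shell is a fraction of order $e^{-T/2}$ of any window meeting it substantially. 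On the other hand, the rate $\beta(U)=\inf_{\nu\notin U}I_0(\nu)$ tends to $0$ as $U$ shrinks. Indeed, by Corollary~\ref{cor:R}, $I_0(\nu)=1-h_\nu(\Theta)$, so $I_0((1-t)\nu_{\mathrm{mme}}+t\nu')\le t$. A bound of order $e^{(1-\beta)T}$ on bad orbits is therefore compatible with all of $B_q$ being bad once $\beta<1/2$.

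Handling individual denominators needs arithmetic input specific to the fixed-$q$ family, which David and Shapira take from homogeneous dynamics. As it stands, your argument, like the specialization in the paper, proves the statement only with $B_q$ replaced by $\bigcup_{q'<q}B_{q'}$, plus your refinement that holds for all $q$ outside a density-zero set of denominators.
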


Our large-deviation results also yield the following polynomial deviation estimate for rational numbers, analogous to \cite[Theorem~1.2(2)]{dtms}. Let
\[
\mathbb{Q}(q)=
\left\{
\frac{p}{r}\in(0,1):
1\leq p<r\leq q,\ \gcd(p,r)=1
\right\},
\qquad
\Phi(q)=\#\Q(q).
\]

\begin{theorem}\label{thm:prob}
For every sufficiently small $\epsilon>0$, there exists $r_\epsilon>0$ such that, for all sufficiently large $q\in\mathbb N$,
\[
\frac{1}{\Phi(q)}
\#
\left\{
\frac{p}{r}\in\Q(q):
\left|
\frac{\ell(p/r)}{2\log r}
-
\frac{6\log 2}{\pi^2}
\right|
\geq \epsilon
\right\}
\leq q^{-2r_\epsilon}.
\]
\end{theorem}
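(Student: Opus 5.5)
We derive Theorem~\ref{thm:prob} from the large deviation upper bound for suspension semi-flows, Theorem~\ref{thm_ldp2}, in the form with $\mathrm{FPer}_a(T)$. The model is the Gauss map coded as the full shift on $\N$, with roof function $\tau(x)=\log|G'(\pi(x))|=-2\log \pi(x)$, where $\pi$ is the coding map. This roof lies in $\Psi\cap\cR$, since $P(-s\tau)<\infty$ exactly for $s>1/2$ and $\tau\to\infty$ on $[k]$. The potential is $f\equiv 0$, so the weights are all equal and $P_\Theta(0)=h_{\text{top}}(\Theta)=\delta$, where $\delta$ solves $P(-\delta\tau)=0$. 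For the Gauss map $\delta=1$, and the measure of maximal entropy for the flow lifts $\mu_G$. Being SPR, $0$ has a unique equilibrium state $\nu_1$.

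\textbf{Step 1: translate rationals into periodic orbits.} A rational $p/r$ with expansion $[0;a_1,\dots,a_\ell]$ is a boundary point. To it we associate the periodic point $\bar x=\overline{a_1\cdots a_\ell}$ of the shift, taking $\ell$ even or fixing the last digit convention. Standard continuant estimates give
\[
S_\ell\tau(\bar x)=2\log r+O(1)
\]
uniformly, because $|(G^\ell)'|$ at the periodic point is comparable to $r^2$ by bounded distortion. Hence $\{p/r\in\Q(q)\}$ corresponds, up to bounded multiplicity and a bounded shift of period, to $\mathrm{FPer}(2\log q+O(1))$. We restrict to $[a]$ by grouping according to the first digit, or apply the theorem to finitely many $a$ and handle the tail $a_1\geq N$ separately. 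Moreover $\ell(p/r)/(2\log r)$ equals, up to $o(1)$, $1/\int\tau\, d\mu_{\bar x}=\nu_{\bar x}(\text{height-}0 \text{ section density})$. More usefully it equals $\int g\,d\nu_{\bar x}$ for a test function $g$ with $\int_0^{\tau}g=1$, i.e. $g=1/\tau$ along fibers, which is bounded and lies in $\mathrm{C}_0(Y)$ provided we check Proposition~\ref{prop:test}. The target constant is $1/\int\tau\,d\mu_G=\pi^2/(12\log 2)\cdot\ldots$, and the normalization in the statement gives $6\log2/\pi^2$ after the factor $2$.

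\textbf{Step 2: apply the LDP.} Let $\K_\epsilon=\{\nu\in\M_{\le1}(\Theta):|\int g\,d\nu-c|\geq\epsilon\}$, which is closed by continuity of $\nu\mapsto\int g\,d\nu$. Theorem~\ref{thm_ldp2} gives
\[
\limsup_T\frac1T\log\frac{\#\{\text{orbits in }\K_\epsilon\}}{\#\mathrm{FPer}_a(T)}\le-\inf_{\K_\epsilon}I_0.
\]
We need $\inf_{\K_\epsilon}I_0>0$. Since $I_0$ is lower semicontinuous on the compact space and $\K_\epsilon$ is compact, the infimum is attained, so it suffices that $I_0$ vanishes only at $\nu_1$, and $\nu_1\notin\K_\epsilon$. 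The vanishing claim is the flow analogue of Lemma~\ref{lem:=0}. It uses the dual variational principle, Theorem~\ref{thm:dual}, on $\M(\Theta)$ and uniqueness of the equilibrium state. For measures with escaping mass we use $h\le$ (mass)$\cdot$ entropy bounds together with $P_{\Theta,\infty}(0)=1/2<1$. Setting $T=2\log q$ gives the bound $e^{-2\log q\cdot I}=q^{-2I}$. Taking $r_\epsilon<\inf I_0$ absorbs constants. The denominator $\#\mathrm{FPer}(2\log q)$ is comparable to $\Phi(q)\asymp q^2$.

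\textbf{Step 3 and main obstacle: the bookkeeping.} Small denominators $r\le q^{1-\eta}$ contribute only $q^{2-2\eta}$ rationals and are negligible. The rest requires handling the first-digit restriction to $[a]$, which I expect to be the main technical point. I would try decomposing by the first digit $a\le N$ and bounding rationals with $a_1>N$ in a single step. Alternatively, and perhaps more simply, I would use that orbits through $[a]$ capture all cyclic rotations: each rotation class of a rational's digit string meets $[a]$ for some $a$ among its digits, and the periodic measure $\nu_{\bar x}$ is rotation-invariant.
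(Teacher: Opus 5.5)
Your skeleton is the paper's (large deviations for the suspension over the Gauss shift with $f=0$, a closed set of bad measures defined through $1/\int\tau\,d\mu$, positivity of the rate function on it, $s(w)=2\log r+O(1)$, $T=2\log q+O(1)$ and $\Phi(q)\asymp q^2$). However, two steps do not work as you describe them.

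\emph{The restriction to $[a]$.} Rationals with $a_1>N$ cannot be disposed of separately: they are the points of $\Q(q)$ below roughly $1/N$, a proportion $\asymp 1/N$ of $\Q(q)$, not $O(q^{-2r_\epsilon})$. Rotation alone does not help either. Covering every rotation class needs infinitely many cylinders $[a]$, and the time beyond which the bound of Theorem~\ref{thm_ldp2} holds depends on $a$. Rotation works only if you rotate to an occurrence of one fixed digit, say $1$ (multiplicity $O(\log q)$, and $\nu_{\bar x}$ is rotation invariant), and separately prove that words avoiding that digit grow at an exponential rate strictly below $1$. The idea you are missing, which the paper uses, is that the Gauss coding is the full shift, which has BIP. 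By Remark~\ref{rmk:bipPer} the pressure is then the growth rate over \emph{all} periodic orbits. So Lemma~\ref{lem:presC0} and the proof of Theorem~\ref{thm_ldp2} (and its cumulative form, as $P_\Theta(0)=1>0$) go through verbatim with $\mathrm{FPer}(T)$, and no first-digit bookkeeping arises.

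\emph{Positivity of $I_0$ on $\K_\epsilon$.} Your $\K_\epsilon$ contains sub-probability measures, e.g.\ the zero measure. An entropy bound for measures losing mass does not yield $I_0(\lambda\nu)>0$ for $\lambda<1$. The reason is that $I_0$ is a supremum over test functions and only lower semicontinuous, so lower bounds on $I_0$ along approximating probability measures do not pass to the limit; moreover, Theorem~\ref{thm:dual} identifies $I_0$ only on $\M(\Theta)$. What you need is Lemma~\ref{lem:beta} with Remark~\ref{rem:obs2}. If $I_0(\nu)=0$, the derivative formula of Lemma~\ref{lem:deri} gives $\int g\,d\nu=\int g\,d\nu_1$ for every $g\in\mathrm{C}_0(Y)$, which forces $\nu=\nu_1$ on all of $\M_{\le1}(\Theta)$.

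There are also technical slips. The roof $-2\log\pi(x)$ is not bounded away from zero (points beginning with $1,N$ have roof $<2\log(1+1/N)$), so it is not in $\Psi$. Use instead, as the paper does, $\tau=\frac12(\widehat\tau+\widehat\tau\circ\sigma)\ge\log 2$, which has the same Birkhoff sums on periodic points and still lies in $\cR$. A function with $\Delta_g\equiv1$ is not in $\mathrm{C}_0(Y)$, since $\bar\Delta_g(\bar\infty)=1\neq0$ for the full shift. Also, $g=1/\tau$ along fibres is discontinuous across the identification $(x,\tau(x))\sim(\sigma x,0)$. The map $\nu\mapsto\int g\,d\nu$ is nevertheless continuous for the cylinder topology because $\tau\in\cR$ (Lemma~\ref{prop:test3}, with $g$ built as in Remark~\ref{rem:ex}), so your $\K_\epsilon$ is closed. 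Alternatively, as in the paper, take $\K_\epsilon$ to be the closure of the bad periodic measures and use Remark~\ref{rem:convprob} to check $\nu_{\mathrm{mme}}\notin\K_\epsilon$. Finally, $\int\log|G'|\,d\mu_G=\pi^2/(6\log2)$, so $1/\int\tau\,dm_G$ is already $6\log 2/\pi^2$; no extra factor of $2$ enters.
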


It is known, at least since von Neumann's work in 1925, that any dense sequence in $[0,1]$ can be reordered so as to become uniformly distributed; see \cite[Corollary~4.2]{kn} for a modern reference. This observation shows that equidistribution depends not only on the set of points under consideration, but also on the order in which they are enumerated. This point is particularly relevant in Theorem~\ref{os}, where two natural orderings of rational numbers are compared: ordering by the size of the denominator, and ordering by the length of the orbit under the Gauss map, equivalently by the length of the continued fraction expansion. Our proof relates these two orderings by passing to an appropriate suspension semi-flow.

\noindent\\
\textbf{Acknowledgement.} The second author gratefully acknowledges the Pontificia Universidad Cat\'olica de Chile for its warm hospitality through the Program ``Visitas Nacionales'' of the Faculty of Mathematics during the summers of 2025 and 2026.


\section{Countable Markov shifts}  \label{sec:cms}

In this section, we provide the necessary background on the topologies of the space of invariant measures and on the associated thermodynamic formalism in this setting. Let  $M$ be a $\N \times  \N$ matrix  with entries $0$ or $1$. The symbolic space associated to $M$ is defined by
 \begin{equation*}
 \Sigma=\left\{ (x_1, x_2, \dots) \in \N^{\N}: M(x_i,x_{i+1})=1 \text{ for every } i \in \N \right\}.
\end{equation*} 
We endow $\N$ with the discrete topology and $\N^{\N}$ with the product topology. On $\Sigma$ we consider the induced topology. In general, this is a non-compact space. The \emph{shift map} $\sigma:\Sigma \to \Sigma$ is defined by $\sigma(x)=(x_2,x_3,\ldots)$, where $x=(x_1, x_2, \dots ) \in \Sigma$. The dynamical system $(\Sigma,\sigma)$ is called  a  \emph{countable Markov shift}. 

We say that $a_1\ldots a_{m}$ is an \emph{admissible word} if $M(a_{i},a_{i+1})=1$ for every $1\le i\le m-1$. A \emph{cylinder} of length $m$ is a set of the form
\begin{equation*}
[a_1,\ldots,a_{m}]= \left\{ x=(x_1,x_2,\ldots)\in \Sigma :  x_i=a_i  \text{ for } 1 \le i \le m \right\}.
\end{equation*} 
Note that  $[a_1,\ldots,a_{m}] \neq \emptyset$ if and only if  $a_1a_2\ldots a_m$ is an admissible word. The collection of cylinder sets forms a basis for the topology on $\Sigma$. 

Let $d$ be the metric on $\Sigma$ defined by $d(x,y)=0$ if $x=y$, and for $x\ne y$ we set $d(x,y)=\frac{1}{2^n}$, where $n=\min\{i\in\N:x_i\ne y_i\}$. In this metric balls are cylinders. Moreover, it generates the topology.

We say that $(\Sigma,\sigma)$ is \emph{topologically transitive}  if for every $a, b \in \N$, there exists an admissible word that starts with $a$ and ends with $b$. We say that $(\Sigma,\sigma)$ is \emph{topologically mixing} if for every pair $a, b \in \N$, there exists a number $N(a,b)$ such that for every $n \ge N(a,b)$, there exists an admissible word of length $n$ that starts with $a$ and ends with $b$. 

The \emph{$n$-th variation} of a function $\phi:\Sigma \to \R$ is defined by  
$$V_{n}(\phi):= \sup \{\,| \phi(x)-  \phi(y)| : x,y \in \Sigma,\; x_{i}=y_{i}\ \text{for}\ 1 \leq i \leq n \}.$$
We say that $\phi$ has \emph{summable variations} if $\sum_{n=2}^{\infty} V_n(\phi)< \infty$. We say that $\phi$ is \emph{locally H\"older} if there exist constants $\theta \in (0,1)$ and $C>0$ such that $V_{n}(\phi) \leq C \theta^{n}$ for all $n \geq 2$.  Denote by $\text{C}_b(\Sigma)$  the space of bounded, real-valued, continuous functions on $\Sigma$. The $C^0$-norm on $\text{C}_b(\Sigma)$ is defined by  $\|f\|_\infty=\sup_{x\in\Sigma}|f(x)|$. Let $\mathrm{UC}_d(\Sigma)$ be the space of uniformly continuous functions with respect to $d$. Observe that $\phi\in \mathrm{UC}_d(\Sigma)$ if and only if $\lim_{n\to\infty}V_n(\phi)=0$. 

Denote by $\M(\sigma)$ the space of $\sigma$-invariant Borel probability measures on $\Sigma$ and by $\M_{\le1}(\sigma)$ the space of $\sigma$-invariant Borel sub-probability measures on $\Sigma$. Let $(\mu_n)_n$, $\mu$ be measures in $\M_{\le1}(\sigma)$.  We say that $(\mu_n)_n$ converges in the \emph{weak$^\ast$ topology} to $\mu$ if $\lim_{n\to\infty}\int f d\mu_n=\int fd\mu$, for every $f\in \text{C}_b(\Sigma)$ (see \cite{par} for properties).  We say that $(\mu_n)_n$ converges \emph{on cylinders}, or in the \emph{cylinder topology}, to $\mu$ if $\lim_{n\to\infty}\mu_n(C)= \mu(C)$, for every cylinder $C\subset\Sigma$ (see \cite{iv} for properties).

\subsection{Compactification and test functions for the cylinder topology}\label{sec:test} 

The purpose of this section is twofold. First, we describe a compactification of $\Sigma$, studied in \cite{gs,it,iv2,z}, to which $\sigma$ extends continuously, ensuring a compact set of invariant measures. Second, we use this compactification to introduce a manageable class of test functions for the cylinder topology. On $\Sigma$ we introduce the metric 
\begin{align}
\label{eq:metric}\rho(x,y)=\sum_{n=1}^\infty \frac{1}{2^n}\left|\frac{1}{x_n}-\frac{1}{y_n}\right|,
\end{align}
where $x=(x_1,x_2,\ldots)$, $y=(y_1,y_2, \ldots)$.  This metric is totally bounded, and its completion produces a compact space 
$\bar\Sigma$, which can be seen as a subset of $(\mathbb{N}\cup\{\infty\})^\mathbb{N}$ 
where entries are allowed to take the value $\infty$. Because the shift map 
$\sigma:\Sigma\to\Sigma$ is uniformly continuous with respect to $\rho$, it 
extends naturally to a continuous map $\bar\sigma:\bar\Sigma\to\bar\Sigma$. In this 
way, $(\bar\Sigma,\bar\sigma)$ provides a compactification of the original system 
$(\Sigma,\sigma)$. Denote by $\M(\bar\sigma)$ the space of $\bar\sigma$-invariant Borel probability measures on $\bar\Sigma$, equipped with the weak$^\ast$ topology.

Let $\text{UC}_{\rho}(\Sigma)$ denote the space of functions on $\Sigma$ that are uniformly continuous with respect to $\rho$, and let $\text{UC}_{b,\rho}(\Sigma)$ be the subspace of bounded ones. Continuous extension and restriction establish a bijection between $\text{UC}_{b,\rho}(\Sigma)$ and $\text{C}(\bar\Sigma)$. For $f\in\text{UC}_{b,\rho}(\Sigma)$, we denote its extension to $\bar\Sigma$ by $\bar f\in\text{C}(\bar\Sigma)$

Denote by $\bar\infty=(\infty,\infty,\ldots)$. It is shown in \cite[Corollary 3.7]{iv2} that $\bar\infty \notin \bar\Sigma$ if and only if $(\Sigma, \sigma)$ admits  a finite uniform Rome \footnote{Let $G = (V, E)$ be the directed graph associated with a countable Markov shift $(\Sigma, \sigma)$. A subset $F \subseteq V$ is called a \emph{uniform Rome} if there exists $N \in \mathbb{N}$ such that $V \setminus F$ contains no paths in $G$ of length greater than $N$. A \emph{finite uniform Rome} is a uniform Rome where $F$ is finite.}, a combinatorial condition that prevents escape of mass for invariant measures. If $\bar\infty\in \bar\Sigma,$ we define 
\begin{align}\label{eq:test1}
\text{UC}_{b,\rho}^0(\Sigma)=\left\{f\in \text{UC}_{b,\rho}(\Sigma): \bar{f}(\bar \infty)=0 \right\},
\end{align}
and, otherwise, we set $\text{UC}_{b,\rho}^0(\Sigma)= \text{UC}_{b,\rho}(\Sigma)$. The set  $\text{UC}_{b,\rho}^0(\Sigma)$ is a space of test functions for the cylinder topology. 

\begin{lemma}\label{lem:testsigma}
 Let $(\Sigma,\sigma)$ be a transitive countable Markov shift. Let $\lambda\in[0,1]$ and $(\mu_n)_n,\mu$  measures in $\M(\sigma)$. Then, $(\mu_n)$ converges on cylinders to $\lambda\mu$ if and  only if 
 $$\lim_{n\to\infty}\int f \, d\mu_n=\int f \, d(\lambda\mu),$$
for every $f\in \emph{UC}_{b,\rho}^0(\Sigma)$. 
\end{lemma}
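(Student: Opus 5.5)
We transfer the problem to the compact system $(\bar\Sigma,\bar\sigma)$ and use the fact that cylinder indicators and functions vanishing at $\bar\infty$ generate the relevant topology. View each $\mu_n$ and $\mu$ as Borel probability measures on $\bar\Sigma$ by pushing forward under the inclusion $\Sigma\hookrightarrow\bar\Sigma$. These measures are $\bar\sigma$-invariant and give zero mass to $\bar\Sigma\setminus\Sigma$. A function $f\in \text{UC}^0_{b,\rho}(\Sigma)$ corresponds exactly to $\bar f\in C(\bar\Sigma)$ with $\bar f(\bar\infty)=0$, or to an arbitrary $\bar f\in C(\bar\Sigma)$ if $\bar\infty\notin\bar\Sigma$. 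In both cases we have $\int f\,d\mu_n=\int\bar f\,d\mu_n$.

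\textbf{Direction $(\Leftarrow)$.} Fix a cylinder $C=[a_1,\ldots,a_m]$. The indicator $1_C$ depends only on the first $m$ coordinates, and each of these takes a value in the discrete set $\{1/k\}$. On points with finite first $m$ coordinates it is therefore locally constant for $\rho$. It is uniformly $\rho$-continuous: if $\rho(x,y)<2^{-m}\delta$ with $\delta$ smaller than the gaps around $1/a_i$, then $x_i=a_i$ for all $i\le m$ if and only if $y_i=a_i$ for all $i\le m$. Its extension $\bar 1_C$ is the indicator of the clopen set $\{z\in\bar\Sigma: z_i=a_i,\ i\le m\}$, which vanishes at $\bar\infty$. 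Hence $1_C\in \text{UC}^0_{b,\rho}(\Sigma)$, and convergence of integrals gives $\mu_n(C)\to\lambda\mu(C)$.

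\textbf{Direction $(\Rightarrow)$.} This is the main step. Suppose $\mu_n\to\lambda\mu$ on cylinders. By compactness of $\M(\bar\sigma)$, it suffices to show that every weak$^*$ limit point $\bar\nu$ of $(\mu_n)$ in $\M(\bar\sigma)$ satisfies $\int\bar f\,d\bar\nu=\lambda\int f\,d\mu$ for all $\bar f$ with $\bar f(\bar\infty)=0$. The plan is to decompose $\bar\nu=\bar\nu|_\Sigma+\bar\nu|_{\bar\Sigma\setminus\Sigma}$ and identify the two pieces.

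\emph{First piece.} Cylinders in $\bar\Sigma$ are clopen, so weak$^*$ convergence gives $\bar\nu(\bar C)=\lim\mu_{n_k}(C)=\lambda\mu(C)$ for every cylinder $C$. Here $\bar C$ is the closure of $C$, and $\bar C\setminus C$ consists of points with some infinite coordinate beyond position $m$. This yields $\bar\nu(\bar C)\ge\bar\nu|_\Sigma(C)$. To get equality I would show that $\bar\nu$-almost every point of $\bar\Sigma\setminus\Sigma$ equals $\bar\infty$, which is the key claim. Granting it, $\bar C\setminus C$ is $\bar\nu$-null (it does not contain $\bar\infty$). Then $\bar\nu|_\Sigma$ and $\lambda\mu$ agree on cylinders, hence are equal.

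\emph{The key claim.} We use $\bar\sigma$-invariance and the fact that $\bar\sigma$ is continuous on $\bar\Sigma$. In the completion, a coordinate equal to $\infty$ can only be followed by $\infty$, because the transitions out of arbitrarily large symbols have no finite constraint in the limit. More precisely, I expect (as in \cite{iv2}) that the completion's structure forces this. By Poincaré recurrence, $\bar\nu$-almost every point $z$ returns near itself. If $z_1=\infty$ but some $z_j<\infty$, consider the finite symbol $z_j$ at position $j$. Invariance gives $\bar\nu(\{z_1=\infty, z_j=b\})=\bar\nu(\bar\sigma^{-k}\{\cdot\})$, and the sets $\{z_{1+k}=\infty,\,z_{j+k}=b\}$ are disjoint-ish. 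This is combined with the first piece, namely $\bar\nu(\{z_i=b\})=\lambda\mu([b])$, which is carried by $\Sigma$. So $\bar\nu(\bar\Sigma\setminus\Sigma\cap\{z_i=b\})=0$ for every finite $b$ and every $i$, since cylinders $[b]$ (a length-one cylinder at position $i$, pulled back by $\sigma^{i-1}$) capture all the mass. Comparing $\bar\nu(\{z_i=b\})=\lambda\mu(\sigma^{-(i-1)}[b])$ with the inequality $\bar\nu(\{z_i=b\})\ge\bar\nu|_\Sigma(\ldots)$ yields the needed exactness by summing over cylinders of increasing length. I expect this bookkeeping, showing that mass in $\bar\Sigma\setminus\Sigma$ sits only at $\bar\infty$, to be the main obstacle. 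Transitivity enters to guarantee $\bar\infty\in\bar\Sigma$ whenever there is escape.

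\emph{Conclusion.} With the key claim, $\bar\nu=\lambda\mu+(1-\lambda)\delta_{\bar\infty}$. Therefore $\int\bar f\,d\bar\nu=\lambda\int f\,d\mu$ for all $\bar f$ vanishing at $\bar\infty$. When $\bar\infty\notin\bar\Sigma$, the same argument forces $\lambda=1$. Since the limit point was arbitrary, the convergence of integrals holds for all $f\in \text{UC}^0_{b,\rho}(\Sigma)$.
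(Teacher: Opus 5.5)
Your $(\Leftarrow)$ direction is correct. It is also more elementary than the paper's argument, which routes both directions through \cite[Theorem 3.5]{iv2}. Reducing $(\Rightarrow)$ to showing that every weak$^*$ limit point $\bar\nu$ of $(\mu_n)$ in $\M(\bar\sigma)$ equals $\lambda\mu+(1-\lambda)\delta_{\bar\infty}$ is the right framework; that identification is exactly the content of the theorem the paper cites.

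The gap is in your proof of this key claim.
\begin{itemize}
\item Your structural assertion, that in $\bar\Sigma$ an $\infty$ coordinate can only be followed by $\infty$, is false. For the full shift, $\bar\Sigma=(\N\cup\{\infty\})^{\N}$ contains $(\infty,1,1,\ldots)$. It even contains the $\bar\sigma$-periodic point $(\infty,1,\infty,1,\ldots)$, whose orbit measure is $\bar\sigma$-invariant and gives full mass to $\bar\Sigma\setminus(\Sigma\cup\{\bar\infty\})$.
\item Consequently, no argument using only invariance and Poincar\'e recurrence (your ``disjoint-ish'' sets) can prove the claim. You must use that $\bar\nu$ is a limit of the $\mu_n$ and that $\mu_n\to\lambda\mu$ on cylinders.
\item Your comparison uses only the trivial inequality $\bar\nu(\{z_i=b\})\ge\bar\nu|_\Sigma(\sigma^{-(i-1)}[b])$. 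Exactness needs the reverse bound $\bar\nu|_\Sigma\ge\lambda\mu$ on cylinders. Your ``first piece'' obtains that bound only by granting the key claim, so as organized the argument is circular. The decisive bookkeeping, which you yourself flag as the main obstacle, is never carried out.
\end{itemize}

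The missing step is a proof that $\bar\nu(\bar C\setminus C)=0$ which does not use the key claim. Take $C=[a_1\ldots a_m]$. The clopen sets $\{z\in\bar\Sigma: z_1\ldots z_{m+1}=a_1\ldots a_m b\}$, $b\in\N$, partition $\bar C\cap\{z_{m+1}<\infty\}$. Each has $\bar\nu$-mass $\lambda\mu([a_1\ldots a_m b])$. Countable additivity of $\mu$ then gives
\[
\bar\nu\bigl(\bar C\cap\{z_{m+1}=\infty\}\bigr)=\lambda\mu(C)-\lambda\sum_{b\in\N}\mu([a_1\ldots a_m b])=0 .
\]
Taking the union over all cylinders of all lengths gives $\bar\nu(\bar C\setminus C)=0$, hence $\bar\nu|_\Sigma=\lambda\mu$. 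Only now does your comparison close. By invariance of $\bar\nu$ and of $\mu$,
\[
\bar\nu(\{z_i=b\})=\bar\nu(\{z_1=b\})=\lambda\mu([b])=\lambda\mu(\sigma^{-(i-1)}[b])=\bar\nu(\{z_i=b\}\cap\Sigma),
\]
so $\bar\nu(\{z_i=b\}\setminus\Sigma)=0$ for all $i$ and $b$. Every point of $\bar\Sigma\setminus(\Sigma\cup\{\bar\infty\})$ has some finite coordinate, so $\bar\nu=\lambda\mu+(1-\lambda)\delta_{\bar\infty}$.

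With this inserted, your route is a self-contained proof of the lemma. Unlike the paper's citation of \cite{iv2}, it does not need transitivity. Positive mass at $\bar\infty$ already forces $\bar\infty\in\bar\Sigma$, and if $\bar\infty\notin\bar\Sigma$ it forces $\lambda=1$.
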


\begin{proof} First, consider the case where $\bar\infty \notin \bar\Sigma$. By \cite[Corollary 3.7]{iv2}, we have that $\lambda=1$. Since $ \text{UC}_{b,\rho}(\Sigma)$ is a convergence defining class for the weak$^\ast$ topology (see \cite[Remark 3.1]{iv2}), the result follows.

Now assume that $\bar\infty\in \bar\Sigma$. By \cite[Theorem 3.5]{iv2}, a sequence $(\mu_n)_n$ converges on cylinders to $\lambda\mu$ if and only if it converges in the weak$^\ast$ topology of $\M(\bar\Sigma)$ to $\lambda\mu+(1-\lambda)\delta_{\bar\infty}$. Here $\delta_{\bar\infty}$ denotes the Dirac measure at $\bar\infty$. Equivalently,
$$\lim_{n\to\infty} \int g \, d\mu_n=\lambda\int g \, d\mu+(1-\lambda)g(\bar\infty),$$ 
for every $g\in \text{UC}_{b,\rho}(\Sigma)$. This in turn is equivalent to 
$$\lim_{n\to\infty} \int f \, d\mu_n=\lambda\int f \, d\mu,$$
 for every $f\in \text{UC}^0_{b,\rho}(\Sigma)$, upon writing $f=g-g(\bar\infty)$. 
\end{proof}

\begin{remark}In \cite[Lemma 3.19]{iv}, we described the space of test functions for the cylinder topology as the $\text{C}^0$-closure of the set of functions that can be written as finite linear combinations of characteristic functions of cylinder sets. In the present setting, we restrict to invariant measures, which allows us to work with a broader class of test functions.\end{remark}

\begin{remark} \label{rem:weak=cyl}  When there is no escape of mass the cylinder and the weak$^*$ topologies coincide. Indeed, if $\mu \in \mathcal{M}(\sigma)$ and $(\mu_n)_n$ is a sequence of measures in $\mathcal{M}(\sigma)$, then $(\mu_n)_n$ converges to $\mu$ in the weak$^*$ topology if and only if it converges to $\mu$ on cylinders, see \cite[Lemma 3.17]{iv}.
\end{remark}

We conclude this subsection with a few observations.

\begin{lemma}\label{lem:lip}
Let $f : \Sigma \to \R$ be a Lipschitz function with respect to $\rho$. Then $f$ has summable variations and $V_1(f)<\infty$.
\end{lemma}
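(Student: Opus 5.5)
The plan is to compare $\rho$ with the cylinder structure directly. Let $L$ be a Lipschitz constant of $f$ with respect to $\rho$. Take $x,y\in\Sigma$ with $x_i=y_i$ for $1\le i\le n$. In the series \eqref{eq:metric} every term with index $i\le n$ then vanishes. For the remaining terms we use the crude bound $\left|\frac{1}{x_i}-\frac{1}{y_i}\right|\le 1$, valid because $x_i,y_i\in\N$ and so $1/x_i,1/y_i\in(0,1]$. This gives
\[
\rho(x,y)=\sum_{i=n+1}^\infty \frac{1}{2^i}\left|\frac{1}{x_i}-\frac{1}{y_i}\right|\le \sum_{i=n+1}^\infty \frac{1}{2^i}=\frac{1}{2^n}.
\]
Consequently $|f(x)-f(y)|\le L\rho(x,y)\le L2^{-n}$. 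Taking the supremum over all such pairs yields $V_n(f)\le L2^{-n}$ for every $n\ge 1$.

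Both claims follow from this single estimate. For $n=1$ it gives $V_1(f)\le L/2<\infty$. Summing over $n\ge 2$ gives
\[
\sum_{n\ge2}V_n(f)\le L\sum_{n\ge 2}2^{-n}=L/2<\infty,
\]
so $f$ has summable variations. In fact the estimate shows more: $f$ is locally H\"older with $\theta=1/2$.

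There is no genuine obstacle here. The only point needing care is that the case $n=1$ also works: two points sharing only their first symbol are still at $\rho$-distance at most $1/2$. Without the $\rho$-Lipschitz hypothesis a function could fail to have $V_1(f)<\infty$, since $\Sigma$ has infinitely many symbols. The bound $\rho\le 2^{-n}$ on the $n$-cylinders removes this problem uniformly.
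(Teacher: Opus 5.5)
Your proof is correct and follows the paper's own argument: agreement in the first $n$ coordinates forces $\rho(x,y)\le 2^{-n}$, hence $V_n(f)\le L2^{-n}$, which gives both $V_1(f)<\infty$ and summable variations. Your version also spells out the bound $|1/x_i-1/y_i|\le 1$, which the paper leaves implicit.
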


\begin{proof}
Assume that there exists $C>0$ such that for every $x,y\in\Sigma$ we have $|f(x)-f(y)|\le C\rho(x,y)$.  Note that if $x, y \in \Sigma$ are such that $x_i = y_i$ for all $i \in \{1, \ldots, n\}$, then $\rho(x, y) \le \frac{1}{2^n}$. Therefore, $|f(x)-f(y)|\le C\rho(x,y)\le \frac{C}{2^n}$, and thus $V_n(f)\le \frac{C}{2^n}$. 
\end{proof}

\begin{lemma}\label{lem:appr} If $f\in \emph{UC}_{b,\rho}(\Sigma)$ then it can be $\emph{C}^0-$approximated by potentials with summable variations. Moreover, $V_1(f)<\infty$. 
\end{lemma}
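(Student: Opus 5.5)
Both claims come from the compactness of $\bar\Sigma$ together with Lemma~\ref{lem:lip}: we approximate $f$ uniformly by $\rho$-Lipschitz functions, and we control $V_1(f)$ directly by uniform continuity.

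\textbf{Approximation.} Let $f\in\mathrm{UC}_{b,\rho}(\Sigma)$ and let $\bar f\in \mathrm{C}(\bar\Sigma)$ be its continuous extension to the compact metric space $(\bar\Sigma,\rho)$. Lipschitz functions are uniformly dense in $\mathrm{C}(\bar\Sigma)$; this follows from Stone--Weierstrass, or more concretely from the inf-convolutions
\[
\bar f_k(z)=\inf_{w\in\bar\Sigma}\bigl(\bar f(w)+k\rho(z,w)\bigr).
\]
Each $\bar f_k$ is $k$-Lipschitz, and $\|\bar f_k-\bar f\|_\infty\to 0$ because $\bar f$ is bounded and uniformly continuous. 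Restrict $\bar f_k$ to $\Sigma$ to get $f_k$, which is $\rho$-Lipschitz on $\Sigma$. By Lemma~\ref{lem:lip}, each $f_k$ has summable variations, and $\|f_k-f\|_\infty\to0$.

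\textbf{Finite first variation.} Here we use uniform continuity of $f$ with respect to $\rho$ directly. Pick $\delta>0$ such that $\rho(x,y)<\delta$ implies $|f(x)-f(y)|<1$. Choose $n$ with $2^{-n}<\delta/2$. Take $x,y$ with $x_1=y_1$. Since $(\Sigma,\rho)$ has compact completion, the cylinder $[x_1]$ is totally bounded. However, we should not work with the cylinder; bounding $V_1(f)$ by $2\|f\|_\infty$ is enough, since $f$ is bounded and therefore $V_1(f)\le 2\sup|f|<\infty$.

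The argument is routine. The only mild care needed is to make sure the Lipschitz approximants are Lipschitz on all of $\Sigma$ with respect to $\rho$, so that Lemma~\ref{lem:lip} applies. The inf-convolution construction guarantees this directly.
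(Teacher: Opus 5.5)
Your proof is correct and follows the same route as the paper. You extend $f$ to $\bar f\in \mathrm{C}(\bar\Sigma)$, approximate uniformly by $\rho$-Lipschitz functions (your inf-convolution makes explicit the density fact the paper calls well known), restrict to $\Sigma$, and apply Lemma~\ref{lem:lip}. The bound $V_1(f)\le 2\|f\|_\infty$ correctly settles the second claim, but you should delete the abandoned sentences about choosing $\delta$ and about total boundedness of $[x_1]$, which play no role.
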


\begin{proof} Let $\bar{f}$ be the continuous extension of $f$ to $\bar{\Sigma}$. It is well known that Lipschitz functions are dense in the space of continuous functions on a compact metric space with respect to the $\text{C}^0-$norm. It follows that $\bar{f}$ can be approximated by Lipschitz functions with respect to $\rho$. The result now follows from Lemma \ref{lem:lip}.
\end{proof}

\subsection{Thermodynamic formalism} \label{sec:thm}

Let $\phi:\Sigma\to\R$ be a potential with summable variations. The \emph{Gurevich pressure} of $\phi$, introduced by Sarig in \cite{sa1}, is defined as
\begin{align*}\label{eq:press1}
P(\phi) = \limsup_{n \rightarrow \infty} \frac{1}{n} \log \sum_{\sigma^{n}x=x}  \exp \left(\sum_{k=0}^{n-1} \phi(\sigma^{k}x)\right) \chi_{[i]}(x),  
\end{align*}
where $ \chi_{[i]}(x)$ is the indicator of the cylinder $[i]$. If the system is transitive, $P(\phi)$ is independent of the choice of $[i]$; if it is topologically mixing, the limsup is in fact a limit. 

The Gurevich pressure  is convex  and satisfies the following approximation property:
$$P(\phi) = \sup \left\{ P(\phi|K) : K \textrm{ is compact and } \sigma\textrm{-invariant}\right\},$$ 
where $P(\phi| K)$ is the topological pressure of $\phi$ restricted to the compact set $K$ (for definition and properties see \cite[Chapter 9]{w}). Moreover, it satisfies the Variational Principle (see \cite{sa1, ijt}):
$$ P(\phi)= \sup \left\{ h_\mu(\sigma) + \int \phi \ d \mu : \mu \in \M(\sigma) \text{ and } - \int \phi \ d \mu < \infty \right\},  $$
where  $h_\mu(\sigma)$ denotes the entropy of the measure $\mu$ (for a precise definition, see \cite[Chapter 4]{w}). A measure $\mu \in \M(\sigma)$ attaining the supremum  is called an \emph{equilibrium measure} or \emph{equilibrium state}  for $\phi$.

A countable Markov shift $(\Sigma, \sigma)$ is said to satisfy the \emph{Big Images and Pre-images property} (BIP property)  if there exists a finite set $B \subset \N$ such that, for every $a \in \N$, there exist $b,b' \in B$ with  $bab'$  admissible. For example, the full shift $(\N^\N,\sigma)$  satisfies the BIP property. Under this combinatorial assumption, the characteristic function in the definition of the Gurevich pressure can be omitted \cite[Corollary 1]{sa3}. That is, if $(\Sigma, \sigma)$ satisfies the BIP property and $\phi$ has summable variations, then 
\begin{equation*}
P(\phi) = \lim_{n \rightarrow \infty} \frac{1}{n} \log \sum_{\sigma^{n}x=x}  \exp \left(\sum_{k=0}^{n-1} \phi(\sigma^{k}x)\right).\end{equation*}

For our applications we will require thermodynamic quantities for potentials with less regularity.

\begin{lemma}\label{lem:c0} Let $(\Sigma,\sigma)$ be a topologically mixing countable Markov shift and $\phi:\Sigma\to\R$  a potential with summable variations and finite pressure. Let $g\in \mathrm{UC}_{b,\rho}(\Sigma)$. Then, 
\begin{align*}
\sup \left\{	h_{\mu}(\sigma)  + \int (\phi +g) \, d\mu : \mu \in \M(\sigma) \text{ and }\int (\phi +g) \, d\mu  <\infty		\right\} = &\\ \lim_{n \to \infty} \frac{1}{n} \log \sum_{\sigma^nx=x}\exp \left(\sum_{k=0}^{n-1} (\phi+g) (\sigma^{k}x)\right) &\chi_{[a]}(x). 
\end{align*}
\end{lemma}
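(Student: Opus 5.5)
The plan is to reduce to the summable-variations case by uniform approximation, using Lemma~\ref{lem:appr}. Two observations drive the argument.

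\emph{Both sides are $1$-Lipschitz in the $\mathrm{C}^0$-norm of $g$.} Write $L(\psi)$ for the left-hand side and $R(\psi)$ for the right-hand side evaluated at the potential $\psi=\phi+g$. Let $h\in\mathrm{UC}_{b,\rho}(\Sigma)$ with $\|g-h\|_\infty\le\epsilon$.
\begin{itemize}
\item For every $\mu$ we have $|\int(\phi+g)\,d\mu-\int(\phi+h)\,d\mu|\le\epsilon$. Since $g,h$ are bounded, the integrability constraint is the same for $\phi+g$ and $\phi+h$. Hence $|L(\phi+g)-L(\phi+h)|\le\epsilon$.
\item For every periodic point $x$ of period $n$ we have $|S_n(\phi+g)(x)-S_n(\phi+h)(x)|\le n\epsilon$. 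Hence the $\limsup$ and the $\liminf$ of $\frac1n\log\sum(\cdots)$ change by at most $\epsilon$.
\end{itemize}

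\emph{The identity holds when $g$ has summable variations.} By Lemma~\ref{lem:appr}, choose $g_k$ with summable variations and $\|g-g_k\|_\infty\to0$. Each $g_k$ is bounded, being uniformly close to the bounded function $g$. Then $\phi+g_k$ has summable variations, and its pressure satisfies $P(\phi+g_k)\le P(\phi)+\sup|g_k|<\infty$. For such potentials two earlier facts apply:
\begin{itemize}
\item Since the shift is topologically mixing, the Gurevich pressure $P(\phi+g_k)$ is a genuine limit.
\item By the Variational Principle, $P(\phi+g_k)$ equals the supremum of $h_\mu(\sigma)+\int(\phi+g_k)\,d\mu$.
\end{itemize}
The constraint in the Variational Principle is $-\int(\phi+g_k)\,d\mu<\infty$, whereas the statement writes $\int(\phi+g)\,d\mu<\infty$. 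Since $\sup\phi<\infty$ follows from finite pressure and summable variations only in special cases, I would interpret the constraint as in the Variational Principle. Measures with $\int\phi\,d\mu=-\infty$ contribute $-\infty$ anyway, provided entropy is finite, or by the usual convention $h_\mu+\int\phi=-\infty$ when $\int\phi\,d\mu=-\infty$. With this reading, $L(\phi+g_k)=R(\phi+g_k)$, and $R(\phi+g_k)$ is a limit.

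\emph{Passing to the limit.} For general $g$, the Lipschitz estimate gives
\[
\limsup_n\tfrac1n\log Z_n(\phi+g)\le R(\phi+g_k)+\epsilon_k,
\qquad
\liminf_n\tfrac1n\log Z_n(\phi+g)\ge R(\phi+g_k)-\epsilon_k,
\]
where $Z_n$ denotes the partition sum over $\mathrm{Per}_a(n)$ and $\epsilon_k=\|g-g_k\|_\infty$. Since $R(\phi+g_k)=L(\phi+g_k)$ is Cauchy in $k$, the limit exists. It equals $\lim_k L(\phi+g_k)=L(\phi+g)$.

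The main obstacle is the integrability condition in the supremum, as discussed in the second step. It is harmless once one observes that the sets of admissible $\mu$ for $\phi+g$ and for $\phi+g_k$ coincide because $g-g_k$ is bounded. The rest is routine.
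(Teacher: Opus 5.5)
Your proposal is correct and follows the same route as the paper. Both approximate $g$ in $\mathrm{C}^0$ by functions with summable variations via Lemma~\ref{lem:appr}, use the facts that the Gurevich pressure is a genuine limit and that the variational principle holds for $\phi+g_k$, and then pass to the limit because both sides are $1$-Lipschitz in the $\mathrm{C}^0$-norm of the potential. Your write-up just makes explicit what the paper's two-line proof leaves implicit: the $\limsup$/$\liminf$ squeeze that shows the limit exists, and the observation that the integrability constraint (which should indeed be read as $\int(\phi+g)\,d\mu>-\infty$) is unchanged by bounded perturbations.
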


\begin{proof} Since both the Gurevich pressure and the variational formula depend continuously on the potential with respect to the $\text{C}^0$-norm, it suffices to approximate $\phi+g$ in $\text{C}^0$ by potentials with summable variations, for which the Gurevich pressure is given by a limit and the variational principle holds. This follows directly from Lemma \ref{lem:appr}.
\end{proof}

In \cite[Theorem 1.4 and Remark 6.1]{iv2},  we established a version of the dual variational principle for transitive countable Markov shift. Specifically, if $\phi\in\mathrm{UC}_d(\Sigma)$ satisfies $s_\infty(\phi)<1$, $\sup \phi <\infty$, $P(\phi)<\infty$ and $\mu \in \M(\sigma)$ has $\int \phi d\mu>-\infty$, then
\begin{align}\label{ref:dualvarprin}
h(\mu) + \int \phi \, d \mu = \inf \left\{P(\phi+g) - 	\int g \, d \mu : g \in \mathrm{UC}_{b,\rho}(\Sigma)	\right\}.
\end{align}
This dual variational principle asserts  that the subdifferentials of the pressure functional correspond to equilibrium measures.

We conclude this section by defining thermodynamic quantities at infinity. Let $(\Sigma, \sigma)$ be a transitive countable Markov shift and $\phi: \Sigma \to \R$ a potential of summable variations, the \emph{pressure at infinity} of $\phi$ is defined by
\begin{equation} \label{pre_inf}
P_{\infty}(\phi)=\sup_{(\mu_n)_n \mapsto 0} \limsup_{n \to \infty} \left(h_{\mu_n}(\sigma)  + \int \phi \, d\mu_n 	\right),
\end{equation}
where the supremum is taken over all sequences $(\mu_n)_n $ in $\M(\sigma)$ converging in cylinders to  the zero measure and  for which $\int \phi \, d\mu_n <\infty$ for every $n \in \N$. This notion has been studied with different degrees of generality by  R\"uhr and Sarig \cite{sr} and by Velozo \cite{v} (see also \cite{itv} where the notion of entropy at infinity was studied).  We say that  $\phi$ is \emph{strongly positive recurrent} (SPR) if  $P_\infty(\phi)<P(\phi)$. Potentials in this class are well behaved, for example, under mild integrability conditions  they have unique equilibrium state with strong ergodic properties (\cite[Theorem 1.4]{v} and \cite[Theorem 8.2]{sr}).


\section{Proofs of Theorem \ref{thm_ldp} and Theorem \ref{thm_equi}}

Let $(\Sigma,\sigma)$ be a topologically mixing countable Markov shift. In Section~\ref{sec:test} we introduced its compactification $(\bar\Sigma,\bar\sigma)$. Recall that there is a bijection between $\text{UC}_{b,\rho}(\Sigma)$ and $\text{C}(\bar\Sigma)$ via continuous extension and restriction. More precisely, every  $g\in\text{UC}_{b,\rho}(\Sigma)$ extends uniquely to a continuous function $\bar{g}:\bar\Sigma\to\R$. In particular, $\int \bar{g} \, d\mu$ is well defined for every $\mu\in \M(\bar\sigma)$, and if $\mu\in \M(\sigma)$ then $\int \bar{g} \, d\mu=\int g \, d\mu$. Moreover, the map $\eta\mapsto \int \bar{g} \, d\eta$ is continuous with respect to the weak$^\ast$ topology on $\M(\bar\sigma)$. 

 The proof strategy is similar to that used by  Kifer~\cite{kif} and Pollicott~\cite{po} once the phase space has been compactified. Let $\phi:\Sigma\to\R$ be a potential with summable variations and finite pressure.

\begin{definition}
The \emph{rate function} $I_\phi:\M(\bar\sigma) \to [0,\infty)$ is defined by
\begin{equation*}
I_\phi(\mu)= \sup \left\{ \int \bar{g}\, d \mu -P(g+\phi)  +P(\phi) : g \in \text{UC}_{b,\rho}(\Sigma)	\right\}.
\end{equation*}
\end{definition}

We note that $I_\phi$ is lower semicontinuous, since it can be expressed as the supremum of continuous functions. 
Setting $g=0$ immediately yields $I_\phi(\mu) \ge 0$ for every $\mu \in \M(\bar\sigma)$.

\subsection{Proof of Theorem \ref{thm_ldp}}

Set $\beta=\inf\{I_\phi(\eta):\eta\in \mathcal{K}\}$ and fix $\epsilon >0$. Note that
\begin{equation*}
\K \subset \left\{	 \eta \in \M(\bar\sigma): I_\phi(\eta) > \beta - \epsilon	 \right\}.
\end{equation*}
For each $g\in \text{UC}_{b,\rho}(\Sigma)$, define 
$$\mathcal{V}_g= \left\{	 \eta \in \M(\bar\sigma): \int \bar{g} \, d \eta - P(\phi+g) + P(\phi) > \beta - \epsilon	\right\}.$$  Since for every  $g\in \text{UC}_{b,\rho}(\Sigma)$ the map $\eta \mapsto  \int \bar{g}\, d \eta$ is continuous in $\M(\bar\sigma)$,  the set $\mathcal{V}_g$ is open in the weak$^\ast$ topology of $\M(\bar{\sigma})$. By the definition of $I_\phi(\eta)$,  we have that
$$\left\{	 \eta \in \M(\bar\sigma): I_\phi(\eta) > \beta - \epsilon	 \right\} =  \bigcup_{g \in \text{UC}_{b,\rho}(\Sigma)}\mathcal{V}_g.$$
Since $\K$ is compact and $\mathcal{K}\subset  \bigcup_{g \in \text{UC}_{b,\rho}(\Sigma)}\mathcal{V}_g$, there exists a finite sub-cover $\{\mathcal{V}_{g_1}, \dots , \mathcal{V}_{g_N}\}$. 

For a continuous function $\psi:\Sigma\to\R$, we set $S_n \psi(x)= \sum_{i=0}^{n-1} \psi(\sigma^i x)$ and  
$$Z_n(\psi)=\sum_{x\in \text{Per}_a(n)}e^{S_n\psi(x)}. $$
Let $g\in \text{UC}_{b,\rho}(\Sigma)$. Note that 
 \begin{align*}
\sum_{x\in \text{Per}_a(n), \mu_x\in \mathcal{V}_g}e^{S_n\phi(x)} = & \sum_{x\in \text{Per}_a(n), \mu_x\in \mathcal{V}_g}e^{S_n(\phi+g)(x)}e^{-S_ng(x)}\\
\le & \sum_{x\in \text{Per}_a(n), \mu_x\in \mathcal{V}_g}e^{S_n(\phi+g)(x)} e^{n(P(\phi)-P(\phi+g)-\beta+\epsilon)}\\
\le & Z_n(\phi+g)e^{-nP(\phi+g)}e^{n(P(\phi)-\beta+\epsilon)},
\end{align*}
where we used the definition of $\mathcal{V}_g$. It follows that,
 \begin{align*}
\frac{1}{Z_n(\phi)}\sum_{x\in \text{Per}_a(n), \mu_x\in \mathcal{V}_g}e^{S_n\phi(x)} \le & Z_n(\phi+g)e^{-nP(\phi+g)}Z_n(\phi)^{-1}e^{nP(\phi)}e^{n(-\beta+\epsilon)}.
\end{align*}
Set $\widehat{\mathcal{V}}=\bigcup_{i=1}^N \mathcal{V}_{g_i}$ and observe that,
 \begin{align*}
\frac{1}{Z_n(\phi)}\sum_{x\in \text{Per}_a(n), \mu_x\in \widehat{\mathcal{V}}}e^{S_n\phi(x)} \le & N Z_n(\phi+g)e^{-nP(\phi+g)}Z_n(\phi)^{-1}e^{nP(\phi)}e^{n(-\beta+\epsilon)}.
\end{align*}
From Lemma \ref{lem:c0}  we have that, 
 \begin{align*}
\limsup_{n\to\infty}\frac{1}{n}\log\bigg(\frac{1}{Z_n(\phi)}\sum_{x\in \text{Per}_a(n), \mu_x\in \widehat{\mathcal{V}}}e^{S_n\phi(x)} \bigg)\le -\beta+\epsilon.
\end{align*}
Since $\epsilon>0$ was arbitrary and $\mathcal{K}\subseteq \widehat{\mathcal{V}}$, the result follows.

\subsection{Proof of Theorem \ref{thm_equi}}  

Throughout this section assume that $(\Sigma,\sigma)$ is topologically mixing and that $\phi:\Sigma\to\R$ has summable variations, with $\sup\phi<\infty$, $P(\phi)<\infty$ and $s_\infty(\phi)<1$. 

\begin{lemma} \label{lem:l2}
If $\mu \in \M(\sigma)$ and $\int \phi \, d\mu>-\infty$, then $I_\phi( \mu)= P(\phi)-\big(h_\mu(\sigma)+\int \phi \, d\mu\big).$ 
\end{lemma}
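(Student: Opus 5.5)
The identity follows by rewriting the definition of $I_\phi$ and comparing it with the dual variational principle \eqref{ref:dualvarprin}. For $\mu\in\M(\sigma)$ we have $\int\bar g\,d\mu=\int g\,d\mu$ for every $g\in\mathrm{UC}_{b,\rho}(\Sigma)$. Hence
\[
I_\phi(\mu)=P(\phi)-\inf\left\{P(\phi+g)-\int g\,d\mu : g\in\mathrm{UC}_{b,\rho}(\Sigma)\right\}.
\]
The statement therefore reduces to showing that this infimum equals $h_\mu(\sigma)+\int\phi\,d\mu$.

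For the inequality $\ge$, fix $g\in\mathrm{UC}_{b,\rho}(\Sigma)$. Lemma~\ref{lem:c0} gives the variational principle for $\phi+g$, where $P(\phi+g)$ is read as the periodic-orbit limit. Since $g$ is bounded and $\int\phi\,d\mu>-\infty$, the measure $\mu$ is admissible in that supremum. Hence $P(\phi+g)\ge h_\mu(\sigma)+\int\phi\,d\mu+\int g\,d\mu$. Rearranging gives $P(\phi+g)-\int g\,d\mu\ge h_\mu(\sigma)+\int\phi\,d\mu$, so $I_\phi(\mu)\le P(\phi)-h_\mu(\sigma)-\int\phi\,d\mu$.

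The reverse inequality is exactly \eqref{ref:dualvarprin}, from \cite[Theorem 1.4 and Remark 6.1]{iv2}. Its hypotheses are $\phi\in\mathrm{UC}_d(\Sigma)$, $s_\infty(\phi)<1$, $\sup\phi<\infty$, $P(\phi)<\infty$ and $\int\phi\,d\mu>-\infty$. The standing assumptions of this section supply all of them except uniform $d$-continuity. That follows from summable variations: summability gives $V_n(\phi)\to0$, and by the remark in Section~\ref{sec:cms} this is equivalent to $\phi\in\mathrm{UC}_d(\Sigma)$. The one point to watch is $V_1$, which need not be finite. However, $\mathrm{UC}_d$ only requires $V_n(\phi)\to 0$, so this causes no problem.

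The only step needing care is matching conventions. We must check that the pressure $P(\phi+g)$ in \eqref{ref:dualvarprin} agrees with the one used in the definition of $I_\phi$. Here $\phi+g$ may fail to have summable variations, so $P(\phi+g)$ is understood via Lemma~\ref{lem:c0}, that is, via $\mathrm{C}^0$-approximation. Both sides then refer to the same quantity, and the paper already treats it this way. If a gap appeared, I would instead approximate $g$ by Lipschitz functions using Lemma~\ref{lem:appr}. This works because both $P(\phi+g)$ and $\int g\,d\mu$ are $1$-Lipschitz in $\|g\|_\infty$, so the infimum is unchanged. Combining the two inequalities gives $I_\phi(\mu)=P(\phi)-h_\mu(\sigma)-\int\phi\,d\mu$.
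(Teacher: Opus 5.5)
Your proposal is correct and follows the paper's argument. The paper's proof is just the observation that, since $\int \bar g\,d\mu=\int g\,d\mu$ for $\mu\in\M(\sigma)$, the identity restates the dual variational principle \eqref{ref:dualvarprin} from \cite{iv2}. Your separate derivation of the easy inequality via Lemma~\ref{lem:c0} is redundant, since \eqref{ref:dualvarprin} is already an equality, but it does no harm. Your check that summable variations give $\phi\in\mathrm{UC}_d(\Sigma)$, and your remark on how $P(\phi+g)$ is to be read, make explicit what the paper leaves implicit.
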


\begin{proof}
    This follows directly from the dual variational principle for the pressure on countable Markov shifts proved in \cite{iv2} (see equation (\ref{ref:dualvarprin})). 
\end{proof}

\subsubsection{Proof of Theorem \ref{thm_equi}(a)}
From now on we further assume that $P_\infty(\phi)<P(\phi)$, that is, $\phi$ is SPR. In this case  $\phi$ admits a unique equilibrium state of $\phi$, denoted by $\mu_\phi$  

\begin{lemma}\label{lem:=0} If $\mu\in\M(\bar\sigma)$ satisfies that $I_\phi(\mu)=0$, then $\mu=\mu_\phi$.
\end{lemma}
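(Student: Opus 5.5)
Let $\mu\in\M(\bar\sigma)$ with $I_\phi(\mu)=0$. The plan is to decompose $\mu$ according to the mass it gives to $\Sigma$, and then show both that no mass escapes and that the part on $\Sigma$ is an equilibrium state. By \cite{iv2}, every $\bar\sigma$-invariant measure can be written as
\[
\mu=\lambda\nu+(1-\lambda)\mu_\infty,
\]
where $\lambda\in[0,1]$, $\nu\in\M(\sigma)$, and $\mu_\infty$ is supported on $\bar\Sigma\setminus\Sigma$. Moreover, any sequence in $\M(\sigma)$ converging to $\mu$ converges on cylinders to $\lambda\nu$ (compare the proof of Lemma \ref{lem:testsigma}). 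The goal is to prove $\lambda=1$ and $\nu=\mu_\phi$.

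\textbf{Step 1 (approximation).} First I will show that $\mu$ is a weak$^*$ limit in $\M(\bar\sigma)$ of measures $\mu_n\in\M(\sigma)$ with
\[
h_{\mu_n}(\sigma)+\int\phi\,d\mu_n\to P(\phi).
\]
To do this, note that $I_\phi(\mu)=0$ means $P(\phi+g)-P(\phi)\ge\int\bar g\,d\mu$ for every $g\in\mathrm{UC}_{b,\rho}(\Sigma)$. Thus the affine map $g\mapsto \int \bar g\, d\mu$ is a tangent functional to the convex, $C^0$-Lipschitz map $g\mapsto P(\phi+g)$ on the Banach space $\mathrm{C}(\bar\Sigma)$ at $g=0$. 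I will use the Brøndsted--Rockafellar or Bishop--Phelps theorem, as in Kifer/Israel-type arguments. It gives, for each $n$, a potential $g_n$ with $\|g_n\|_\infty<1/n$ and a tangent functional $\mu_n'$ at $g_n$ that is weak$^*$ close to $\mu$. The Lipschitz density of Lemma \ref{lem:appr} lets me take $\phi+g_n$ with summable variations.

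Next I need to identify $\mu_n'$. For $n$ large, SPR is an open condition in $C^0$, since both $P$ and $P_\infty$ move by at most $\|g_n\|_\infty$. Also $s_\infty(\phi+g_n)<1$. Hence $\phi+g_n$ has a unique equilibrium state $\mu_n$. By Lemma \ref{lem:c0} and the dual variational principle (\ref{ref:dualvarprin}), the tangent functionals at $g_n$ are exactly the equilibrium states, so $\mu_n'=\mu_n$. Finally,
\[
h_{\mu_n}+\int\phi\,d\mu_n\ge P(\phi+g_n)-1/n\to P(\phi).
\]

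\textbf{Step 2 (no escape of mass).} Suppose $\lambda<1$. Then $\mu_n$ converges on cylinders to $\lambda\nu$, and I will use the upper semicontinuity at infinity from \cite{v} or \cite{iv2}:
\[
\limsup_n\Big(h_{\mu_n}+\int\phi\,d\mu_n\Big)\le \lambda\Big(h_\nu+\int\phi\,d\nu\Big)+(1-\lambda)P_\infty(\phi).
\]
This is the step I expect to be the main obstacle. It needs $\sup\phi<\infty$ and $s_\infty(\phi)<1$ to control the integral part, and these are exactly the standing assumptions. If $\int\phi\,d\nu=-\infty$ the right side is $-\infty$ whenever $\lambda>0$; this case should be excluded the same way. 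Combining with Step 1 gives
\[
P(\phi)\le\lambda P(\phi)+(1-\lambda)P_\infty(\phi)<P(\phi),
\]
which is a contradiction since $P_\infty(\phi)<P(\phi)$. Hence $\lambda=1$, and by Remark \ref{rem:weak=cyl} the convergence is weak$^*$ in $\M(\sigma)$.

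\textbf{Step 3 (identification).} With $\lambda=1$, the same inequality gives
\[
h_\nu+\int\phi\,d\nu\ge P(\phi).
\]
In particular $\int\phi\,d\nu>-\infty$, so $\nu$ is an equilibrium state. By uniqueness, $\nu=\mu_\phi$, and therefore $\mu=\nu=\mu_\phi$.

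A simpler alternative to Step 1, if it works, is to apply Lemma \ref{lem:l2} directly: when $\lambda=1$ and $\int\phi\,d\mu>-\infty$ it already gives the result. Step 1 is then only needed to rule out escape of mass and the degenerate integral case.
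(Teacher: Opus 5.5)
\textbf{There is a genuine gap: the identification $\mu_n'=\mu_n$ in Step~1 is circular.}

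The dual variational principle (\ref{ref:dualvarprin}) only applies to $\eta\in\M(\sigma)$ with $\int\phi\,d\eta>-\infty$. It shows that such an $\eta$, if it is tangent at $g_n$, is an equilibrium state. It says nothing about tangent functionals in $\M(\bar\sigma)$ that charge $\bar\Sigma\setminus\Sigma$, and excluding those at an SPR potential is exactly Lemma~\ref{lem:=0} for $\phi+g_n$. This exclusion is not automatic: by Lemma~\ref{lem:=01}, when $\phi$ has no equilibrium state the tangent functional is $\delta_{\bar\infty}$.

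In fact, $\mu$ is already an exact tangent functional at $0$. So Bishop--Phelps/Br{\o}ndsted--Rockafellar is satisfied by the trivial choice $g_n=0$, $\mu_n'=\mu$, and with that choice your identification reads $\mu=\mu_\phi$, which is the statement itself. Replacing $g_n$ by a Lipschitz approximant does not fix this. The subdifferential is only norm-to-weak$^\ast$ upper semicontinuous in the base point, so a nearby Lipschitz point need not carry a tangent functional close to $\mu$. A genuine approximation by equilibrium states would require choosing $g_n$ among points of G\^ateaux differentiability, where the tangent functional is unique; the proposal does not do this.

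There is also a separate error. Your claim that every sequence in $\M(\sigma)$ converging to $\mu$ in $\M(\bar\sigma)$ converges on cylinders to $\mu|_\Sigma$ is false in general. \cite[Theorem 3.5]{iv2} only covers limits of the form $\lambda\nu+(1-\lambda)\delta_{\bar\infty}$. For example, in the full shift the periodic measures on $\overline{(n,1)}$ converge in $\M(\bar\sigma)$ to the orbit measure of $\overline{(\infty,1)}$. That limit gives no mass to $\Sigma$, yet every measure in the sequence gives mass $1/2$ to $[1]$.

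\textbf{The missing idea.} Uniqueness of the tangent functional at $0$ is simply G\^ateaux differentiability of the pressure at the SPR potential $\phi$; this is the paper's argument.
\begin{enumerate}
\item From $I_\phi(\mu)=0$, replacing $g$ by $tg$ with $t>0$ gives $\int\bar g\,d\mu\le (P(\phi+tg)-P(\phi))/t$.
\item For SPR $\phi$ the right-hand side tends to $\int g\,d\mu_\phi$ as $t\to0^+$ (\cite[Theorem 3.2]{sr}), so $\int\bar g\,d\mu\le\int g\,d\mu_\phi$.
\item Applying this to $-g$ gives equality for every $g\in\mathrm{UC}_{b,\rho}(\Sigma)$, that is, for every continuous function on $\bar\Sigma$. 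Hence $\mu=\mu_\phi$.
\end{enumerate}
No decomposition of $\mu$, no approximation, and no semicontinuity at infinity are needed.

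The stability input in your Steps~2--3 is essentially what underlies such a derivative formula (compare the proof of Lemma~\ref{lem:deri}). There, however, it is applied to the equilibrium states $\mu_{\phi+tg}$, which are tangent functionals at $tg$ by the easy half of the variational principle. Combined with the sandwich $\int g\,d\mu_\phi\le (P(\phi+tg)-P(\phi))/t\le\int g\,d\mu_{\phi+tg}$, this means no arbitrary tangent functional ever has to be identified.
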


\begin{proof} By definition,   for every $g\in  \text{UC}_{b,\rho}(\Sigma)$ we have that  $\int \bar{g} \,d\mu-P(g+\phi)+P(\phi)\le 0$.  Equivalently, 
$$\int \bar{g} \, d\mu\le P(\phi+g)-P(\phi),$$
for every $g\in  \text{UC}_{b,\rho}(\Sigma)$. In particular, we have that 
$$\int \bar{g} \, d\mu\le \frac{P(\phi+tg)-P(\phi)}{t},$$
for every $t>0$. It follows by the formula of the derivative of the pressure (see \cite[Theorem 3.2]{sr} and \cite{ijv}), that $\int \bar{g} \, d\mu\le \int g \, d \mu_\phi$, for every $g\in\text{UC}_{b,\rho}(\Sigma)$. Replacing $g$ with $-g$ we obtain that  $\int \bar{g} \, d\mu= \int g \, d \mu_\phi$, for every $g\in\text{UC}_{b,\rho}(\Sigma)$, and therefore $\mu=\mu_\phi$. 
\end{proof}

\begin{corollary} \label{cor:obs} We have
\begin{enumerate}
\item If $\mathcal{K}$ is a closed subset of $\M(\bar\sigma)$ such that $\mu_\phi\notin \mathcal{K}$, then 
$\inf\{I_\phi(\mu):\mu\in \mathcal{K}\}>0.$ 
\item If $\mathcal{K}$ is a closed subset of $\M(\sigma)$ such that $\mu_\phi\notin \mathcal{K}$, then 
$\inf\{I_\phi(\mu):\mu\in \mathcal{K}\}>0.$
\end{enumerate}
\end{corollary}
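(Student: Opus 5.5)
Part (a) follows from compactness, lower semicontinuity and Lemma~\ref{lem:=0}. Since $\bar\Sigma$ is a compact metric space, $\M(\bar\sigma)$ is compact in the weak$^\ast$ topology, so a closed $\K\subset\M(\bar\sigma)$ is compact. The rate function $I_\phi$ is a supremum of the continuous maps $\eta\mapsto\int\bar g\,d\eta-P(\phi+g)+P(\phi)$, hence lower semicontinuous. A lower semicontinuous function attains its infimum on a compact set, so there is $\mu_0\in\K$ with $I_\phi(\mu_0)=\inf_{\K}I_\phi$. If this infimum were $0$, then $I_\phi(\mu_0)=0$, and Lemma~\ref{lem:=0} would give $\mu_0=\mu_\phi\in\K$, contradicting the hypothesis. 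Since $I_\phi\ge 0$, the infimum is strictly positive. (If $\K$ is empty the infimum is $+\infty$ and there is nothing to prove.)

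For part (b) I would reduce to part (a). I read ``closed subset of $\M(\sigma)$'' as closed in the weak$^\ast$ topology of $\M(\sigma)$, and regard $\M(\sigma)\subset\M(\bar\sigma)$ by pushing forward under the inclusion $\Sigma\hookrightarrow\bar\Sigma$. Let $\overline{\K}$ be the weak$^\ast$ closure of $\K$ in $\M(\bar\sigma)$. Then $\inf_{\K}I_\phi\ge\inf_{\overline\K}I_\phi$, and by part (a) it suffices to show $\mu_\phi\notin\overline\K$. Suppose instead that $\mu_\phi\in\overline\K$. Since $\M(\bar\sigma)$ is metrizable, there is a sequence $(\mu_n)_n\subset\K$ with $\mu_n\to\mu_\phi$ weak$^\ast$ in $\M(\bar\sigma)$.

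The main step is to upgrade this to weak$^\ast$ convergence in $\M(\sigma)$. Weak$^\ast$ convergence in $\M(\bar\sigma)$ means $\int g\,d\mu_n\to\int g\,d\mu_\phi$ for every $g\in\mathrm{UC}_{b,\rho}(\Sigma)$, and in particular for every $g\in\mathrm{UC}^0_{b,\rho}(\Sigma)$. By Lemma~\ref{lem:testsigma} with $\lambda=1$, $\mu_n$ converges to $\mu_\phi$ on cylinders. The limit is a probability measure on $\Sigma$, so there is no escape of mass, and Remark~\ref{rem:weak=cyl} shows that $\mu_n\to\mu_\phi$ in the weak$^\ast$ topology of $\M(\sigma)$. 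As $\K$ is closed in $\M(\sigma)$, this gives $\mu_\phi\in\K$, a contradiction.

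The only delicate point is this passage from convergence in the compactification to weak$^\ast$ convergence on $\Sigma$. It rests on $\mu_\phi$ being a genuine probability measure on $\Sigma$, with no mass at $\bar\infty$ or on $\bar\Sigma\setminus\Sigma$, so that Lemma~\ref{lem:testsigma} and Remark~\ref{rem:weak=cyl} apply.
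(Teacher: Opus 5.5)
Your proposal is correct and matches the paper's proof. Part (a) uses compactness of $\M(\bar\sigma)$, lower semicontinuity of $I_\phi$ and Lemma~\ref{lem:=0}, and part (b) shows that $\mu_\phi$ does not lie in the $\M(\bar\sigma)$-closure of $\K$ and then applies (a); your route through Lemma~\ref{lem:testsigma} with $\lambda=1$ and Remark~\ref{rem:weak=cyl} just spells out the step the paper attributes to Remark~\ref{rem:weak=cyl} in one line.
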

\begin{proof}
For the first part, suppose that $\inf\{I_\phi(\mu):\mu\in \mathcal{K}\}=0$. Since $I_\phi$ is lower semicontinuous and $\mathcal{K}$ is compact, there exists $\mu' \in \mathcal{K}$ such that $I_\phi(\mu')=0$. By Lemma~\ref{lem:=0}, this implies $\mu'=\mu_{\phi}$, so $\mu_\phi\in \mathcal{K}$. This contradicts the assumption. 
It is a consequence of Remark \ref{rem:weak=cyl} that $\mu_\phi\notin\overline{\mathcal{K}}$, where $\overline{\mathcal{K}}$ is the closure of ${\mathcal{K}}$ in $\M(\bar\sigma)$. To prove the second part  now  use part (a).
\end{proof}

\begin{proof}[Proof of Theorem \ref{thm_equi}(a)]
Let $g\in \text{UC}_{b,\rho}(\Sigma)$ and $\epsilon>0$. Define 
$$U=U(g,\epsilon)=\bigg\{\eta  \in\M(\bar\sigma):\bigg|\int \bar{g} \, d\eta-\int g \, d\mu_\phi\bigg|< \epsilon\bigg\}.$$ 
Since finite intersections of such sets form a basis for the weak$^\ast$ topology, it suffices to show that $\mu_n \in U(g,2\varepsilon)$ for all sufficiently large $n$. Consider the complement  of $U$: 
$$\K=\left\{\eta \in \M(\bar\sigma):\bigg|\int \bar{g} \, d\eta-\int g \, d\mu_\phi \bigg| \ge \epsilon\right\},$$
which is closed in $\M(\bar\sigma)$.  Since $\mu_\phi \notin \K$, Corollary \ref{cor:obs} yields
 $$\inf \{I_\phi(\mu):\mu\in \K\}=\beta>0.$$  
Choose $\beta_0\in(0,\beta)$. By Theorem \ref{thm_ldp} there exists $N_1 \in \N$ such that if $n\ge N_1$ then 
$$\sum_{x\in \text{Per}_a(n), \mu_x\in \K}e^{S_n\phi(x)}\le e^{-\beta_0 n}\sum_{x\in \text{Per}_a(n)}e^{S_n\phi(x)}.$$
By definition of the Gurevich pressure, our assumption is equivalent to
$$\lim_{n\to\infty}\frac{1}{n}\log\bigg(\frac{\sum_{x\in A_n}e^{S_n\phi(x)}}{\sum_{x\in \text{Per}_a(n)}e^{S_n\phi(x)}}\bigg)=0.$$
There exists $N_2 \in \N$ such that if $n\ge N_2$, then
$$ \sum_{x\in \text{Per}_a(n)}e^{S_n\phi(x)}\le e^{\beta_0 n/2} \sum_{x\in A_n}e^{S_n\phi(x)}.$$
Assume that $n\ge\max\{N_1,N_2\}$. Combining these inequalities we obtain
\begin{align}\label{eq:decay} 
\sum_{x\in A_n,\mu_x\in \K}e^{S_n\phi(x)}\le e^{-\beta_0n/2}\sum_{x\in A_n}e^{S_n\phi(x)}.
\end{align}
Recall that by definition 
$$\mu_n=\frac{1}{\sum_{x\in A_n}e^{S_n\phi(x)}}\sum_{x\in A_n}e^{S_n\phi(x)}\mu_x,$$
hence
$$\int g \, d\mu_n=\frac{1}{\sum_{x\in A_n}e^{S_n\phi(x)}}\sum_{x\in A_n}e^{S_n\phi(x)}\int g \, d\mu_x.$$
It follows from (\ref{eq:decay}) that
\begin{align}\label{eq:small}
\frac{1}{\sum_{x\in A_n}e^{S_n\phi(x)}}\sum_{x\in A_n,\mu_x\in \K}e^{S_n\phi(x)}\int g \, d\mu_x & \le \|g\|e^{-\beta_0 n/2}.
\end{align}
Additionally, by definition of $U$, we obtain
\begin{align}\label{eq:big}
\bigg|\sum_{x\in A_n,\mu_x\in U}e^{S_n\phi(x)}\int g \, d\mu_x-\sum_{x\in A_n,\mu_x\in U}e^{S_n\phi(x)}\int g \, d\mu_\phi\bigg| \le \epsilon \sum_{x\in A_n,\mu_x\in U}e^{S_n\phi(x)}.
\end{align}
Therefore, by (\ref{eq:small}) and (\ref{eq:big})
$$\bigg|\int g \, d\mu_n-\int g \, d\mu_\phi\bigg|\le \epsilon+\|g\|e^{-\beta_0n/2}.$$
We conclude that $\bigg|\int g \, d\mu_n-\int g \, d\mu_\phi\bigg|\le 2\epsilon$ for sufficiently large $n$. The conclusion now follows from Remark \ref{rem:weak=cyl}.
\end{proof}

\subsubsection{Proof of Theorem \ref{thm_equi}(b)} From now on we further assume that $\phi$ does not have any equilibrium state. In other words, $\phi$ is transient or null recurrent (see \cite{sa1} for precise definitions). 

Let $a\in\N$. The induced system over the cylinder $[a]$ is a full shift in a countable alphabet $\Sigma_F$ with  first return time $r_a:\Sigma_F\to\N$ (details of this construction can be found in \cite{sa2}). Denote by $\tilde{\phi}$ the induced potential of $\phi$ and $P_F(\cdot)$ denote the pressure on $\Sigma_F$. Set $$p_a(\phi)=\sup\{t:P_F(\tilde{\phi}+t r_a)<\infty\}.$$ The $a$-discriminant of $\phi$ is defined as $$\Delta_a(\phi)=P_F(\tilde{\phi}+p_a(\phi)r_a).$$ It is proved in the discriminant theorem \cite[Theorem 2]{sa2} that $\phi$ is transient if and only if $\Delta_a(\phi)<0$ and that in this case $P(\phi)=p_a(\phi)$. Note that $$p_a(\phi+s\chi_{[a]})=\sup\{t:P_F(\tilde{\phi}+tr_a)+s<\infty\}=p_a(\phi)$$ and that $\Delta_a(\phi+s\chi_{[a]})=\Delta_a(\phi)+s$.  In particular $\phi+s\chi_{[a]}$ is transient for $s$ small enough, and therefore 
\begin{align}\label{eq:trans}
P(\phi+s\chi_{[a]})=-p_a(\phi+s\chi_{[a]})=-p_a(\phi)=P(\phi).
\end{align}

It also follows by the discriminant theorem that $\Delta_a(\phi)\ge 0$ if and only if $\phi$ is recurrent, and in this case $P(\phi)$ satisfies that $P_F(\tilde{\phi}-P(\phi) r_a)=0$. Hence,  $\phi+t\chi_{[a]}$ is recurrent for $t\ge 0$ and therefore  
\begin{align}\label{for:111}
    0=P_F(\widetilde{\phi+t\chi_{[a]}})-P(\phi+t\chi_{[a]})r_a)=P_F(\tilde{\phi}-P(\phi+t\chi_{[a]})r_a)+t.
\end{align}

\begin{lemma}\label{lem:=01}
If $\mu\in\M(\bar\sigma)$ satisfies that $I_\phi(\mu)=0$, then $\mu(\Sigma)=0$. Equivalently, $\mu=\delta_{\bar{\infty}}$.
\end{lemma}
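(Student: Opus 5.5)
Since $I_\phi(\mu)=0$, every $g\in\mathrm{UC}_{b,\rho}(\Sigma)$ gives $\int\bar g\,d\mu\le P(\phi+g)-P(\phi)$. I will apply this to $g=t\chi_{[a]}$ with $t\in\R$. This is admissible because $\chi_{[a]}$ is uniformly $\rho$-continuous: cylinders of length one are at positive $\rho$-distance from their complement, since $|1/a-1/b|\ge \min(|1/a-1/(a+1)|,|1/a-1/(a-1)|)>0$ for $b\neq a$. Its extension $\bar\chi_{[a]}$ is the indicator of $\{x\in\bar\Sigma: x_1=a\}$, so $\int\bar g\,d\mu=t\,\mu([a])$, where $\mu([a])$ is computed in $\bar\Sigma$ and the set contains only points with first coordinate $a$. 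The plan is to show $\mu([a])=0$ for every $a\in\N$.

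For $t<0$ the inequality reads $t\mu([a])\le P(\phi+t\chi_{[a]})-P(\phi)$, that is, $\mu([a])\ge (P(\phi)-P(\phi+t\chi_{[a]}))/|t|$. Monotonicity gives the harmless bound $P(\phi+t\chi_{[a]})\le P(\phi)$, so this direction is not the useful one. I will instead use $t>0$: $\mu([a])\le (P(\phi+t\chi_{[a]})-P(\phi))/t$, and let $t\downarrow 0$. It then suffices to show that the right derivative of $t\mapsto P(\phi+t\chi_{[a]})$ at $t=0$ vanishes. The case split follows the discriminant theorem, since $\phi$ has no equilibrium state.

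\emph{Transient case} ($\Delta_a(\phi)<0$). Here $\phi+t\chi_{[a]}$ stays transient for $0<t<-\Delta_a(\phi)$, because $\Delta_a(\phi+t\chi_{[a]})=\Delta_a(\phi)+t$. By \eqref{eq:trans} the pressure is then constant equal to $P(\phi)$, so the difference quotient is $0$ and $\mu([a])\le 0$.

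\emph{Null recurrent case} ($\Delta_a(\phi)\ge0$, no equilibrium state). By \eqref{for:111}, $p(t):=P(\phi+t\chi_{[a]})$ solves $G(p(t))=-t$, where $G(s)=P_F(\tilde\phi-s r_a)$ is convex and strictly decreasing on the region where it is finite, with $G(P(\phi))=0$. Null recurrence means the left derivative of $G$ at $P(\phi)$ is $-\int r_a\,d\mu_{\tilde\phi-P(\phi)r_a}=-\infty$ (or $P(\phi)$ sits at the boundary $p_a$, where the same conclusion holds by convexity). Since $p(t)\ge P(\phi)$ and $G$ is decreasing, $p(t)\downarrow P(\phi)$ as $t\downarrow0$. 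Convexity then gives $-t=G(p(t))\le G(P(\phi))+G'_{+}$-type bounds; more precisely, the slope $(G(p(t))-G(P(\phi)))/(p(t)-P(\phi))=-t/(p(t)-P(\phi))$ tends to the one-sided derivative of $G$ at $P(\phi)$, which is $-\infty$. Hence $(p(t)-P(\phi))/t\to0$ and $\mu([a])=0$. This step is the main obstacle. Identifying the infinite derivative of $G$ with null recurrence requires Sarig's characterization via $\int r_a\,d\mu_{\tilde\phi-P(\phi)r_a}=\infty$, and care is needed with the side of the derivative. Since $p(t)>P(\phi)$, the relevant one is the right derivative; if that turns out to be finite, I would instead argue through the derivative-of-pressure formula as in Lemma~\ref{lem:=0}, using that any weak limit of equilibrium states of $\phi+t\chi_{[a]}$ must lose all mass on cylinders when $\phi$ has no equilibrium state.

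To conclude, $\mu([a])=0$ for all $a$ implies $\mu(\Sigma)=0$. Finally, $\mu$ is $\bar\sigma$-invariant and supported on $\bar\Sigma\setminus\Sigma$, whose points contain some coordinate equal to $\infty$. The set $\{x\in\bar\Sigma: x_1=\infty\}$ is the complement of $\bigcup_a[a]$ in $\bar\Sigma$, so it has full measure, and by invariance so does $\{x_n=\infty\}$ for each $n$. Therefore $\mu$ is concentrated on $\bar\infty$, giving $\mu=\delta_{\bar\infty}$.
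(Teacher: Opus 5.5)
Your proposal is the paper's proof: the test functions $t\chi_{[a]}$ give $\mu([a])\le q(t)/t$ with $q(t)=P(\phi+t\chi_{[a]})-P(\phi)$. The transient case then follows from the constancy of pressure in \eqref{eq:trans}, and the null recurrent case from \eqref{for:111} together with the fact that the right derivative of $s\mapsto P_F(\tilde\phi-sr_a)$ at $P(\phi)$ is $-\infty$, so $q(t)/t\to 0$. Your remark about a left derivative and your fallback argument are unnecessary, since only the right derivative enters and that is the one the paper invokes as $-\infty$; your checks that $\chi_{[a]}\in\mathrm{UC}_{b,\rho}(\Sigma)$ and that invariance upgrades $\mu(\Sigma)=0$ to $\mu=\delta_{\bar\infty}$ supply details the paper leaves implicit.
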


\begin{proof} By definition,   for every $g\in  \text{UC}_{b,\rho}(\Sigma)$ we have that  $\int \bar{g} \,d\mu-P(g+\phi)+P(\phi)\le 0$.  Equivalently, 
$$\int \bar{g} \, d\mu\le P(\phi+g)-P(\phi),$$
for every $g\in  \text{UC}_{b,\rho}(\Sigma)$. Fix $a\in\N$. Define $q(t)=P(\phi+t\chi_{[a]})-P(\phi)$. Note that
\begin{align}\label{ineq:111}
    \mu([a])\le \frac{q(t)}{t},
\end{align}
for every $t>0$. We will prove that $\mu([a])=0$. We have two cases to analyze: 

Case 1 ($\phi$ is transient). It follows by  (\ref{eq:trans}) and (\ref{ineq:111}) that for $t>0$ small we have that $\mu([a])=0$.

Case 2 ($\phi$ is null recurrent). By  (\ref{for:111}) we have that 
\begin{align*}
    \frac{t}{q(t)}&=\frac{P_F(\tilde{\phi}-P(\phi)r_a)-P_F(\tilde{\phi}-P(\phi+t\chi_{[a]})r_a)}{q(t)}\\
    &=\frac{P_F(\tilde{\phi}-P(\phi)r_a)-P_F(\tilde{\phi}-P(\phi)r_a-q(t)r_a)}{q(t)}.
\end{align*}
Thus, 
\[
\lim_{t\to 0^+}\frac{t}{q(t)}=-\frac{d}{ds}P_F(\tilde{\phi}-sr_a)\bigg|_{s=P(\phi)^+}.
\]
It is known that if $\phi$ is null recurrent then $\frac{d}{ds}P_F(\tilde{\phi}-sr_a)\big|_{s=P(\phi)^+}=-\infty$. We conclude that $\lim_{t\to 0^+}\frac{q(t)}{t}=0$. It follows by inequality (\ref{ineq:111}) that $\mu([a])=0$ for every $a\in\N$.
\end{proof}

The exact same proof of Corollary \ref{cor:obs} implies that 
\begin{corollary} 
If $\mathcal{K}$ is a closed subset of $\M(\bar\sigma)$ such that $\delta_{\bar\infty}\notin\mathcal{K}$, then  $$\inf\{I_\phi(\mu):\mu\in \mathcal{K}\}>0.$$
\end{corollary}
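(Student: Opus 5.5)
We argue by contradiction, exactly as in part (a) of Corollary~\ref{cor:obs}, with Lemma~\ref{lem:=01} playing the role of Lemma~\ref{lem:=0}. The key input is compactness. Since $\bar\Sigma$ is a compact metric space and $\bar\sigma$ is continuous, $\M(\bar\sigma)$ is a compact metrizable space in the weak$^\ast$ topology. Hence any closed $\mathcal{K}\subset\M(\bar\sigma)$ is compact.

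Suppose that $\inf\{I_\phi(\mu):\mu\in\mathcal{K}\}=0$. We proceed in three steps.

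\begin{enumerate}
\item Choose a sequence $(\mu_k)_k\subset\mathcal{K}$ with $I_\phi(\mu_k)\to 0$. By compactness of $\mathcal{K}$, pass to a subsequence converging weak$^\ast$ to some $\mu'\in\mathcal{K}$.
\item $I_\phi$ is a supremum of the weak$^\ast$ continuous maps $\eta\mapsto\int\bar g\,d\eta-P(\phi+g)+P(\phi)$, so it is lower semicontinuous. Together with $I_\phi\ge 0$ this gives $0\le I_\phi(\mu')\le\liminf_k I_\phi(\mu_k)=0$, so $I_\phi(\mu')=0$.
\item By Lemma~\ref{lem:=01}, which applies since $\phi$ has no equilibrium state, $\mu'(\Sigma)=0$, i.e. $\mu'=\delta_{\bar\infty}$. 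Thus $\delta_{\bar\infty}\in\mathcal{K}$, contradicting the hypothesis.
\end{enumerate}

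There is no real obstacle here; all the substance sits in Lemma~\ref{lem:=01}. Two points deserve a brief check. First, lower semicontinuity requires $\eta\mapsto\int\bar g\,d\eta$ to be continuous on $\M(\bar\sigma)$, which holds because $\bar g\in\mathrm{C}(\bar\Sigma)$. Second, we should confirm that the equivalence ``$\mu(\Sigma)=0$ iff $\mu=\delta_{\bar\infty}$'' is valid. A $\bar\sigma$-invariant probability measure on $\bar\Sigma\setminus\Sigma$ gives zero mass to every cylinder, because the cylinders $[a]$ are contained in $\Sigma$. By \cite[Theorem 3.5]{iv2} such a measure must then be $\delta_{\bar\infty}$. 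This is already asserted in the statement of Lemma~\ref{lem:=01}, so we may invoke it directly.
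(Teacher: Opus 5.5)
Your argument is correct and is exactly the paper's: the paper simply reruns the proof of Corollary~\ref{cor:obs} (compactness of $\mathcal{K}$ in $\M(\bar\sigma)$, lower semicontinuity of $I_\phi\ge 0$, hence a zero of $I_\phi$ in $\mathcal{K}$), with Lemma~\ref{lem:=01} in place of Lemma~\ref{lem:=0}. One caution about your side check, which you do not actually need: it is false in general that a $\bar\sigma$-invariant probability with $\mu(\Sigma)=0$ must be $\delta_{\bar\infty}$ (for the full shift $\bar\Sigma=(\N\cup\{\infty\})^{\N}$, and the periodic orbit of $(1,\infty,1,\infty,\ldots)$ carries such a measure), so your justification fails. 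What really gives $\mu=\delta_{\bar\infty}$ is that the argument of Lemma~\ref{lem:=01} tests against $g=t\chi_{[a]}$, whose extension $\bar g$ is $t$ times the indicator of the clopen set $\{x\in\bar\Sigma : x_1=a\}$; hence $\mu$ gives zero mass to every such extended cylinder, so $\mu(\{x_1=\infty\})=1$, and $\bar\sigma$-invariance then forces $\mu=\delta_{\bar\infty}$.
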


Finally, note that with these ingredients the proof of Theorem \ref{thm_equi}(b) is exactly the same as that of Theorem \ref{thm_equi}(a). In this case we consider $g\in \mathrm{UC}_{b,\rho}^0(\Sigma)$, $\epsilon>0$ and define
$$U=\left\{\eta\in\M(\bar\sigma):\left|\int \bar g\,d\eta\right|<\epsilon\right\}.$$
Then, let
$$\K=\left\{\eta\in\M(\bar\sigma):\left|\int \bar g\,d\eta\right|\ge\epsilon\right\},$$
which is closed in $\M(\bar\sigma)$. Arguing as in part (a), with $\delta_{\bar\infty}$ in place of $\mu_\phi$, we obtain
$$\left|\int g\,d\mu_n\right|\le \epsilon+\|g\|_\infty e^{-\beta_0n/2}$$
for all sufficiently large $n$. Hence $\mu_n\to\delta_{\bar\infty}$ in the weak$^\ast$ topology by Remark \ref{rem:weak=cyl}. This is equivalent to say that $(\mu_n)_n$ converges on cylinders to the zero measure (see  \cite[Theorem 3.5]{iv2}).

\section{Suspension semi-flows}\label{sec:sus}

In this section, we provide the necessary background—together with new results—on the corresponding thermodynamic formalism and the cylinder topology for suspension semi-flows over countable Markov shifts.

Let $(\Sigma, \sigma)$ be  a countable Markov shift and $\tau: \Sigma \to (0,\infty)$  a continuous function bounded away from zero, that is, $\inf \tau>0$. Consider the space
\begin{equation*}\label{eq:flow phase }
Y= \{ (x,t)\in \Sigma  \times \R \colon 0 \le t \le\tau(x)\}/\sim
\end{equation*}
where $(x,\tau(x))\sim (\sigma(x),0)$ for
each $x\in \Sigma $. The suspension semi-flow over $(\Sigma,\sigma)$
with roof function $\tau$ is the semi-flow $\Theta = (\theta_t)_{t \ge 0}$ on $Y$ defined by
$\theta_t(x,s)= (x,s+t)$ whenever $s+t\in[0,\tau(x)]$. In particular, $ \theta_{\tau(x)}(x,0)= (\sigma(x),0)$. 

We consider the following class of roof functions
$$\Psi=\{\tau:\Sigma\to\R \,| \,\tau \text{ has summable variations}, \inf\tau>0, \text{ and }P(-\tau)<\infty\}.$$

Throughout this work, we assume that $(Y,\Theta)$ has finite entropy (for precise definitions see Section \ref{sec:thm_susp}).  In this setting,  the suspension flow $(Y,\Theta)$ has finite entropy if and only if there exists $t_0>0$ such that $P(-t_0\tau)<\infty$. Accordingly, it is convenient, and sufficient, to restrict attention to the class $\Psi$ of roof functions.

\subsection{The space of invariant sub-probability measures} \label{sm}

Denote by $\M_{\leq 1}(\Theta)$ the space of Borel $\Theta$-invariant sub-probability measures on $Y$, that is, Borel measures $\nu$ such that 
\[
\nu(Y)\in[0,1]
\quad \text{and} \quad 
\nu(\theta_t^{-1}(A))=\nu(A) \quad \text{for all } t\geq 0 \text{ and all Borel sets } A\subset Y.
\]
We write $\M(\Theta)$ for the space of Borel $\Theta$-invariant probability measures on $Y$. Let $\text{C}_b(Y)$ denote the space of continuous bounded functions on $Y$.  On $\M_{\leq 1}(\Theta)$, we will consider the notions of convergence induced by the weak$^\ast$ and by the cylinder topologies.

A sequence of measures $(\nu_n)_{n} \subset \M_{\leq 1}(\Theta)$ is said to converge to 
$\nu \in \M_{\leq 1}(\Theta)$ in the \emph{weak$^\ast$ topology} if 
\[
\lim_{n \to \infty} \int f \, d\nu_n \;=\; \int f \, d\nu 
\quad \text{for every } f \in \text{C}_b(Y).
\]
Observe that if $(\nu_n)$ converges to $\nu$ in the weak$^\ast$ topology, then 
\[
\lim_{n \to \infty} \nu_n(Y) \;=\; \nu(Y).
\]
In particular, probability measures converge to probability measures. 
In general, however, the spaces $\M(\Theta)$ and $\M_{\leq 1}(\Theta)$ endowed with the weak$^\ast$ topology are non-compact.

Let $c \in \R^+$ be such that $\inf \tau > c$. 
For $(\nu_n)_{n}$ and $\nu$ measures in $\M_{\leq 1}(\Theta)$, 
we say that $(\nu_n)_{n}$ \emph{converges on cylinders} to $\nu$ if, for every cylinder $C \subset \Sigma$, 
\[
\lim_{n \to \infty} \nu_n(C \times [0,c]) \;=\; \nu(C \times [0,c]).
\]

The \emph{cylinder topology} is the topology that induces this notion of convergence. 
This topology is metrizable (see \cite[Lemma~6.6]{iv}). 
Moreover, if $(\nu_n)_{n}, \nu \in \M(\Theta)$ are probability measures, 
then $(\nu_n)_{n}$ converges on cylinders to $\nu$ if and only if it converges to $\nu$ in the weak$^\ast$ topology 
(see \cite[Lemma~6.7]{iv}). 
In other words, the cylinder topology restricts to the weak$^\ast$ topology on $\M(\Theta)$. 
Unlike the weak$^\ast$ topology, however, in the cylinder topology a sequence of probability measures may converge to 
a sub-probability measure; equivalently, the cylinder topology allows the escape of mass.

A fundamental property of the cylinder topology is that it makes the space of invariant sub-probability measures a compact space.

\begin{remark} \label{thm_compacidad}

Since we are assuming that $(Y,\Theta)$ has finite entropy, there exists $t_0>0$ such that $P(-t_0\tau)<\infty$. It follows from \cite[Theorem 1.6]{v} that the space of invariant sub-probability measures of the suspension semi-flow over $(\Sigma,\sigma)$ with roof function $t_0\tau$ is compact with respect to the cylinder topology. Since the spaces of invariant sub-probability measures of the suspension flows with roof functions $\tau$ and $t_0\tau$ are canonically identified, and homeomorphic, we conclude that the space $\mathcal{M}_{\leq 1}(\Theta)$ is compact metric space with respect to the cylinder topology. Furthermore,  $\mathcal{M}(\Theta)$ is a dense subset of $\mathcal{M}_{\leq 1}(\Theta)$.
\end{remark}

\subsection{Relationship to the base dynamics and the space of test functions} \label{sec:test_flow}
There is a close relationship  between the space $\M_{\leq 1}(\Theta)$ and the space of shift invariant probability measures  $\M(\sigma)$. Indeed, 
let
\begin{equation*}
\M_\tau(\sigma)= \left\{ \mu \in \mathcal{M}(\sigma): \int \tau \, d \mu < \infty \right\}.
\end{equation*}
Denote by $\L$ the  one-dimensional Lebesgue measure. A classical result by Ambrose and Kakutani \cite{ak} states that if $\mu \in \M_\tau(\sigma)$, then 
\begin{equation*} \label{ak} 
AK(\mu):=\frac{(\mu \times \L)|_{Y} }{(\mu \times \L)(Y)} \in \M(\Theta).
\end{equation*}
Note that by Fubini's theorem $(\mu \times \L)(Y)=\int \tau \, d\mu$. If $\tau$ is bounded away from zero, then the map $AK$ defines a bijection between $\M_\tau(\sigma)$ and $\M(\Theta)$.  

Every measure in $\M_{\leq 1}(\Theta)$ can be written as $\lambda \nu$ with $\nu \in \M(\Theta)$ and $\lambda \in [0,1]$. In particular, each measure in $\M_{\leq 1}(\Theta)$ can be expressed as $\lambda \, AK(\mu)$, with $\mu \in \M_\tau(\sigma)$ and $\lambda \in [0,1]$.

We can relate the entropy and integrals of measures in $\M(\Theta)$ to those of the corresponding measures in $\M_\tau(\sigma)$. 
By Abramov's formula \cite{a}, 
\[
h_\nu(\Theta) = \frac{h_\mu(\sigma)}{\int \tau \, d\mu},
\]  
where $\nu = AK(\mu)$ and $h_\nu(\Theta)$ denotes the measure-theoretic entropy of $\nu$ with respect to the time-one map $\theta_1$.  Moreover, by Kac's formula, if $f : Y \to \R$ is a measurable function, then
\[
\int_Y f \, d\nu = \frac{\int_\Sigma \Delta_f \, d\mu}{\int_\Sigma \tau \, d\mu},
\]  
where $\Delta_f : \Sigma \to \R$ is defined by
\[
\Delta_f(x) := \int_0^{\tau(x)} f(x,t) \, dt, \quad \text{for every } x \in \Sigma.
\]

\begin{remark}\label{rem:ex} Given a continuous function $\phi:\Sigma\to\R$, there exists a continuous function $f:Y\to\R$ such that $\Delta_f=\phi$. For instance, consider $$f(x,t)=\frac{\phi(x)}{\tau(x)}\psi'\bigg(\frac{t}{\tau(x)}\bigg),$$ for every $x\in \Sigma$ and $t\in [0,\tau(x)]$, where $\psi:[0,1]\to[0,1]$ is any nondecreasing $C^1$ function such that $\psi(0)=0$, $\psi(1)=1$, and $\psi'(0)=\psi'(1)=0$, see \cite{brw} for details. 
\end{remark}

The following result connects convergence on cylinders in $\M_{\leq 1}(\Theta)$ with properties of the corresponding sequences in $\M_\tau(\sigma)$.

\begin{lemma}[{\cite[Lemma 8.1.1]{v}}] \label{cyl_conver}
Let $(\nu_n)_n,\nu$ be measures in  $\M(\Theta)$, and set $\mu_n=(AK)^{-1}(\nu_n), \mu=(AK)^{-1}(\nu)$. Then the following statements are equivalent:
\begin{enumerate}
\item  $(\nu_n)_n$ converges on cylinders to the zero measure. 
\item Every subsequence of $(\mu_n)_n$ has a subsubsequence $(\mu_{n_k})_k$ which converges on cylinders to the zero measure or 
such that $\lim_{k\to\infty}\int \tau \, d\mu_{n_k}=\infty$.
\end{enumerate}
Similarly, the following statements are equivalent:
\begin{enumerate}
\item  $(\nu_n)_n$ converges on cylinders to $\lambda\nu$, where $\lambda\in (0,1]$. 
\item Every subsequence of $(\mu_n)_n$ has a subsubsequence $(\mu_{n_k})_k$ which converges on cylinders to $\lambda_1\mu$ and 
 $\lim_{k\to\infty}\int \tau \, d\mu_{n_k}=\lambda_2\int \tau \, d\mu$, for some $\lambda_1\in (0,1]$ and $\lambda_2\in [1,\infty)$ satisfying   $\lambda=\lambda_1/\lambda_2$.
\end{enumerate}
\end{lemma}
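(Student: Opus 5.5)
The whole argument rests on one identity. For $\nu_n=AK(\mu_n)$ and any cylinder $C\subset\Sigma$, the Ambrose--Kakutani construction and Fubini give, since $c<\inf\tau$,
\[
\nu_n(C\times[0,c])=\frac{c\,\mu_n(C)}{\int\tau\,d\mu_n},\qquad \nu(C\times[0,c])=\frac{c\,\mu(C)}{\int\tau\,d\mu}.
\]
So cylinder convergence of $(\nu_n)$ is equivalent to convergence of the ratios $\mu_n(C)/\int\tau\,d\mu_n$ for every cylinder $C$. Both equivalences then follow from the subsequence principle: a sequence converges to a limit iff every subsequence has a subsubsequence converging to that limit. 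I will combine this with two facts.
\begin{itemize}
\item $\M_{\le 1}(\sigma)$ is compact in the cylinder topology \cite{iv}.
\item The denominators satisfy $\int\tau\,d\mu_n\ge\inf\tau>0$, so after passing to a subsequence they converge in $[\inf\tau,\infty]$.
\end{itemize}

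For the first equivalence, (2)$\Rightarrow$(1) is immediate. If $\mu_{n_k}(C)\to0$, the ratio tends to $0$ because the denominator is at least $\inf\tau$. If instead $\int\tau\,d\mu_{n_k}\to\infty$, the ratio tends to $0$ because the numerator is at most $1$. Every subsequence therefore has a subsubsequence along which $\nu_{n}(C\times[0,c])\to 0$.

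For (1)$\Rightarrow$(2), fix a subsequence and extract a subsubsequence with two properties: $\mu_{n_k}\to\eta$ on cylinders for some $\eta\in\M_{\le1}(\sigma)$, and $\int\tau\,d\mu_{n_k}\to L\in[\inf\tau,\infty]$. If $L=\infty$ we are done. Otherwise the ratios converge to $\eta(C)/L$, which must vanish by (1). Hence $\eta(C)=0$ for all $C$, that is, $\eta=0$.

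For the second equivalence, direction (2)$\Rightarrow$(1) is the same computation. Along the subsubsequence the ratios tend to
\[
\frac{\lambda_1\mu(C)}{\lambda_2\int\tau\,d\mu}=\frac{\lambda}{c}\,\nu(C\times[0,c]).
\]
For (1)$\Rightarrow$(2), extract as before $\mu_{n_k}\to\eta$ on cylinders and $\int\tau\,d\mu_{n_k}\to L$. The case $L=\infty$ is excluded: the limit $\lambda\nu$ is nonzero because $\lambda>0$, so some cylinder has $\nu(C\times[0,c])>0$. Thus $L<\infty$ and
\[
\eta(C)=\frac{\lambda L}{\int\tau\,d\mu}\,\mu(C)\quad\text{for every cylinder }C.
\]
Since cylinders generate the Borel $\sigma$-algebra and form a $\pi$-system, this gives $\eta=\lambda_1\mu$ with
\[
\lambda_1=\frac{\lambda L}{\int\tau\,d\mu},\qquad \lambda_2=\frac{L}{\int\tau\,d\mu},\qquad \lambda=\frac{\lambda_1}{\lambda_2}.
\]
We have $\lambda_1>0$ since $\lambda,L>0$. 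We have $\lambda_1\le1$ because $\eta$ is a sub-probability measure.

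The step I expect to be the real obstacle is pinning down the range of $\lambda_2$.
\begin{itemize}
\item Lower semicontinuity of $\mu\mapsto\int\tau\,d\mu$ under cylinder convergence yields $\lambda_1\int\tau\,d\mu\le L$, i.e.\ $\lambda_2\ge\lambda_1$. To prove it, write $\tau\ge0$ as an increasing limit of finite sums of (nonnegative multiples of) indicators of cylinders, which is possible since $\tau$ has summable variations, and apply Fatou's lemma.
\item To sharpen this to $\lambda_2\ge1$, I would first split $\mu_{n_k}$ into a part on a finite union of cylinders, where it converges to $\lambda_1\mu$, and an escaping part of mass $\to1-\lambda_1$. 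I would then try to bound $\int\tau$ over the escaping part from below by $(1-\lambda_1)\int\tau\,d\mu$, using the normalization conventions of \cite{v}.
\item I am not certain this lower bound holds in general, since the escaping part may sit where $\tau$ is small. If it fails, I would record $\lambda_2\ge\lambda_1$, equivalently $\lambda\le1$, as the constraint actually used in the sequel.
\end{itemize}
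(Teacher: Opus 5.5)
Everything you actually carry out is correct. This covers the identity $\nu_n(C\times[0,c])=c\,\mu_n(C)/\int\tau\,d\mu_n$, both directions of the first equivalence, and the direction (2)$\Rightarrow$(1) of the second. It also covers, in (1)$\Rightarrow$(2), the identification $\eta=\lambda_1\mu$ with $\lambda_1\in(0,1]$, $\lambda_2=L/\int\tau\,d\mu$ and $\lambda=\lambda_1/\lambda_2$. (The paper gives no argument here; it quotes \cite{v}.)

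The one step you leave open, $\lambda_2\ge 1$, is a genuine gap, but no argument can close it. Your suspicion is right: the range $\lambda_2\in[1,\infty)$ is false in this generality, even for mixing shifts and $\tau\in\Psi$. Here is a counterexample.
\begin{itemize}
\item Take the renewal-type shift on $\N$ with transitions $1\to n$ for all $n\ge1$ and $n\to n-1$ for $n\ge 2$. It is mixing because of the loop $1\to 1$, and its Gurevich entropy is $\log 2$.
\item Let $\tau=10$ on $[1]$ and $\tau=1$ elsewhere. Then $\tau$ is locally constant, $\inf\tau=1$, and $P(-\tau)\le\log 2-1<\infty$.
\item Let $\mu=\delta_{\bar{1}}$ with $\bar{1}=(1,1,\ldots)$, let $\mu^{(n)}$ be the periodic measure on the orbit of $\overline{(1,n,n-1,\ldots,2)}$, and set $\mu_n=\frac12\mu+\frac12\mu^{(n)}$.
\item Each symbol occurs once per period, so $\mu^{(n)}(C)\le 1/n$ and $\mu_n\to\frac12\mu$ on cylinders. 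Meanwhile $\int\tau\,d\mu_n=5+\frac{n+9}{2n}\to\frac{11}{2}$.
\item Your identity gives $\nu_n\to\frac{10}{11}\nu$ on cylinders, so (1) holds with $\lambda=\frac{10}{11}$.
\item Yet every subsubsequence forces $\lambda_1=\frac12$ and $\lambda_2=\frac{11}{20}<1$.
\end{itemize}
This is exactly your worry: the escaping mass sits where $\tau=1<\int\tau\,d\mu$.

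What is true is what your two bullets actually prove. Fatou gives $\lambda_2\ge\lambda_1$. For the monotone approximation by cylinder step functions, continuity of $\tau\ge 0$ suffices; summable variations are not needed. Your splitting, with the escaping mass bounded below by $\inf\tau$ rather than by $\int\tau\,d\mu$, gives the sharper bound $L\ge\lambda_1\int\tau\,d\mu+(1-\lambda_1)\inf\tau$. This bound is attained in the example above.

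The sharper form is worth recording. When $\lambda=1$ it forces $\lambda_1=\lambda_2=1$, which is what Remark~\ref{rem:convprob} needs; the inequality $\lambda_2\ge\lambda_1$ alone does not.

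The bound $\lambda_2\ge 1$ does hold for $\tau\in\cR$. There, escaping mass sends $\int\tau\,d\mu_{n_k}$ to $\infty$, so $\lambda>0$ forces $\lambda_1=1$; this is the setting of \cite[Lemma 6.10]{iv}. Lemma~\ref{prop:test}, where the statement is used, only needs $\lambda=\lambda_1/\lambda_2$.

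So the correct statement replaces $\lambda_2\in[1,\infty)$ by $\lambda_2\ge\lambda_1$, or by the sharper bound. With that change your proof is complete.

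A minor simplification: you do not need compactness of $\M_{\le 1}(\sigma)$. Once you have extracted $\int\tau\,d\mu_{n_k}\to L<\infty$, you can read off $\mu_{n_k}(C)=c^{-1}\nu_{n_k}(C\times[0,c])\int\tau\,d\mu_{n_k}$ directly.
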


\begin{remark}\label{rem:convprob} Let $(\nu_n)_n$ and $\nu$ be measures in  $\M(\Theta)$. Set $\mu_n=(AK)^{-1}(\nu_n)$ and $\mu=(AK)^{-1}(\nu)$. Note that $(\nu_n)_n$ converges on cylinders to $\nu$ if and only if $(\mu_n)_n$ converges on cylinders to $\mu$, and $\lim_{n\to\infty}\int \tau d\mu_n=\int \tau d\mu$. Since the cylinder topology coincides with the weak$^\ast$ topology in the absence of escape of mass, this statement can equivalently be formulated in terms of weak$^\ast$ convergence.
\end{remark}

We conclude this section by defining a class of test functions for the cylinder topology on suspension semi-flows whose roof functions belong to the class $\Psi$. Let
\begin{equation}\label{eq:c0Y}
\text{C}_0(Y)= \left\{ g:Y \to\R \text{ continuous and bounded}: \Delta_g\in \text{UC}_{b,\rho}^0(\Sigma)	\right\},
\end{equation}
where $\text{UC}_{b,\rho}^0(\Sigma)$ is given in (\ref{eq:test1}). 

\begin{lemma}\label{prop:test}
Let $(Y,\Theta)$ be the suspension semi-flow over $(\Sigma, \sigma)$ with roof function $\tau\in \Psi$. If $(\nu_n)_n, \nu$  are measures in $\M(\Theta)$ and $\lambda \in [0,1]$ then, $(\nu_n)_n$ converges on cylinders to $\lambda \nu$ if and only if for every $g\in \emph{C}_0(Y)$ we have
\begin{equation}  \label{eq:lema_test} 
\lim_{n \to \infty} \int g\, d \nu_n= \int g \, d (\lambda\nu).
\end{equation} 
 \end{lemma}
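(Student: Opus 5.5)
The plan is to reduce everything to the base via Kac's formula. Write $\mu_n=(AK)^{-1}(\nu_n)$ and $\mu=(AK)^{-1}(\nu)$. For $g\in \mathrm{C}_0(Y)$ we have
\[
\int g\,d\nu_n=\frac{\int \Delta_g\,d\mu_n}{\int\tau\,d\mu_n},
\]
with $\Delta_g\in \mathrm{UC}^0_{b,\rho}(\Sigma)$. I would then combine Lemma~\ref{cyl_conver} (and Remark~\ref{rem:convprob}), which translates cylinder convergence of $(\nu_n)_n$ into statements about subsequences of $(\mu_n)_n$, with Lemma~\ref{lem:testsigma}, which says that cylinder convergence of $(\mu_n)_n$ is tested by $\mathrm{UC}^0_{b,\rho}(\Sigma)$.

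\emph{Forward direction.} Assume $\nu_n\to\lambda\nu$ on cylinders. It suffices to show that every subsequence has a further subsequence along which (\ref{eq:lema_test}) holds.
\begin{enumerate}
\item If $\lambda>0$, Lemma~\ref{cyl_conver} gives a subsubsequence with $\mu_{n_k}\to\lambda_1\mu$ on cylinders and $\int\tau\,d\mu_{n_k}\to\lambda_2\int\tau\,d\mu$. By Lemma~\ref{lem:testsigma}, $\int\Delta_g\,d\mu_{n_k}\to\lambda_1\int\Delta_g\,d\mu$. Dividing,
\[
\int g\,d\nu_{n_k}\to \frac{\lambda_1}{\lambda_2}\,\frac{\int\Delta_g\,d\mu}{\int\tau\,d\mu}=\lambda\int g\,d\nu .
\]
\item If $\lambda=0$, along a subsubsequence either $\int\tau\,d\mu_{n_k}\to\infty$ or $\mu_{n_k}\to0$ on cylinders. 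In the first case, boundedness of $\Delta_g$ gives $\int g\,d\nu_{n_k}\to0$. In the second case the numerator tends to $0$ by Lemma~\ref{lem:testsigma} with $\lambda=0$, while the denominator stays $\ge\inf\tau>0$. Either way the limit is $0$.
\end{enumerate}

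\emph{Converse.} Assume (\ref{eq:lema_test}) holds for all $g\in\mathrm{C}_0(Y)$. By compactness of $\M_{\le1}(\Theta)$ in the cylinder topology (Remark~\ref{thm_compacidad}) and metrizability, every subsequence of $(\nu_n)_n$ has a further subsequence converging on cylinders to some $\lambda'\nu'$. The forward direction then gives $\int g\,d(\lambda'\nu')=\int g\,d(\lambda\nu)$ for all $g\in\mathrm{C}_0(Y)$. So it suffices to show that $\mathrm{C}_0(Y)$ separates points of $\M_{\le1}(\Theta)$.

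\emph{Separation, the main obstacle.} Given $\phi\in \mathrm{UC}^0_{b,\rho}(\Sigma)$, Remark~\ref{rem:ex} produces a continuous $g$ with $\Delta_g=\phi$. This $g$ need not be bounded, since $\phi/\tau$ is bounded but $\psi'$ is bounded too, so $|g|\le \|\phi\|_\infty\|\psi'\|_\infty/\inf\tau$. Hence $g$ is in fact bounded and lies in $\mathrm{C}_0(Y)$, so every such $\phi$ is realized. Equality of the two functionals then reads
\[
\lambda'\frac{\int\phi\,d\mu'}{\int\tau\,d\mu'}=\lambda\frac{\int\phi\,d\mu}{\int\tau\,d\mu}\quad\text{for all }\phi\in\mathrm{UC}^0_{b,\rho}(\Sigma).
\]
Here $\mu'$ is the base measure of $\nu'$, and the degenerate cases $\lambda=0$ or $\lambda'=0$ are harmless. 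The delicate point is to conclude that $c\mu'=c'\mu$ as measures, that is, that $\mathrm{UC}^0_{b,\rho}(\Sigma)$ separates finite measures on $\Sigma$. For this, when $\bar\infty\in\bar\Sigma$, I would use that a finite measure on $\Sigma$ gives zero mass to $\bar\infty$. Functions in $\mathrm{C}(\bar\Sigma)$ vanishing at $\bar\infty$ separate measures on $\bar\Sigma\setminus\{\bar\infty\}$; alternatively, approximate the indicator of a cylinder by such functions. Once $c\mu'=c'\mu$, both $\mu$ and $\mu'$ are probability measures, so the constants are equal, hence $\mu'=\mu$, and then the constants force $\lambda'=\lambda$. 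Therefore every subsequential limit is $\lambda\nu$, which gives convergence on cylinders.
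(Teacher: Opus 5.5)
Your argument is correct and follows the paper's route: Kac's formula turns \eqref{eq:lema_test} into convergence of $\int\Delta_g\,d\mu_n/\int\tau\,d\mu_n$, and the equivalence then follows from Lemma~\ref{lem:testsigma} and Lemma~\ref{cyl_conver}. The paper only asserts that the equivalence ``follows directly,'' so your compactness-plus-separation argument for the converse fills in what the paper leaves implicit. Two small points: indicators of cylinders already lie in $\mathrm{UC}^0_{b,\rho}(\Sigma)$ and are realized as $\Delta_g$ with bounded $g\in\mathrm{C}_0(Y)$ via Remark~\ref{rem:ex}, which makes the separation step immediate; and you should fix the sentence claiming $g$ ``need not be bounded'' right before you show that it is bounded.
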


\begin{proof}
Let $\mu_n= (AK)^{-1}(\nu_n)$ and $\mu= (AK)^{-1}(\nu)$. Note that by Kac's formula, the limit in equation \eqref{eq:lema_test} can be rewritten as 
\begin{equation}\label{eq:test}
\lim_{n \to \infty}  \frac{\int \Delta_g \, d \mu_n }{\int \tau\, d \mu_n}= \lambda  \frac{\int \Delta_g \, d \mu}{\int \tau\, d \mu}.
 \end{equation}
The equivalence now follows directly from Lemma~\ref{lem:testsigma}, Lemma~\ref{cyl_conver} and the definition of $\text{C}_0(Y)$.
\end{proof}

\subsection{Thermodynamic formalism} \label{sec:thm_susp}

The notion of pressure for suspension semi-flows over countable Markov shifts has been studied in different degrees of generality, both on the class of systems and functions, see  \cite{bi,jkl, ke,sav}. We propose a variational formula since it can be applied to a very general class of functions. 

Let $(Y, \Theta)$ be a suspension semi-flow over a countable Markov shift and $f:Y \to \R$ a continuous function. The \emph{pressure} of $f$ is defined by
\begin{equation*}
P_{\Theta}(f)= \sup \left\{h(\nu) +  \int f \, d \nu: \nu \in \M(\Theta)  \text{ and }	   \int f \, d \nu >  -\infty		\right\}.
\end{equation*}

Let $a\in\N$. For $T\ge T'\ge 0$ we define 
$$\text{FPer}_a(T',T)=\big\{(x;s)\in[a]\times [T',T]:\theta_s(x,0)=(x,0)\big\}.$$
The set $\text{FPer}_a(T',T)$ can be identified with the collection of periodic orbits starting in $[a] \times \{0\}\subset Y$ whose periods lie in the interval $[T',T]$. We also define $$\text{FPer}_a(T)=\big\{(x;s)\in[a]\times [0,T]:\theta_s(x,0)=(x,0)\big\},$$
where we consider all orbits of length $\le T$. Finally, define $$\text{FPer}(T',T)=\bigcup_{a\in\N}\text{FPer}_a(T',T),$$ where we do not restrict the first coordinate to be fixed. 

The following results describe some of the basic properties of the pressure,  see \cite[Theorem 1.2]{jkl} and also
\cite{bi, ke,sav}.

\begin{theorem} \label{thm: flow pres}
Let $(\Sigma, \sigma)$ be a topologically mixing countable Markov shift and $\tau:\Sigma \to \R$ a function of summable variations and bounded away from zero. Let $(Y, \Theta)$ be the associated suspension semi-flow. Let $f:Y \to \R$ be a continuous function such that $\Delta_f$  has summable variations. Let  $a\in\N$ and $d>0$. Then, 
\begin{eqnarray*}
P_{\Theta}(f)&=&\lim_{T \to \infty} \frac{1}{T} \log \left(\sum_{(x;s)\in \mathrm{FPer}_a(T-d,T)} \exp\left( \int_0^s f(\theta_t(x,0))    \, dt \right)\right)\\
&=&\inf\{t \in \R : P (\Delta_f- t \tau) \leq 0\} =\sup \{t \in \R : P (\Delta_f- t \tau) \geq 0\} \\
&=& \sup \{ P_{\Theta|K}(f) : K \text{ is a compact and } \Theta\text{-invariant set} \}.
 \end{eqnarray*}
\end{theorem}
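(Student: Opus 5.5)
The plan is to reduce everything to the base shift, where we have the Gurevich pressure, Sarig's discriminant machinery and the approximation by compact invariant sets. Periodic orbits of $\Theta$ through $[a]\times\{0\}$ are exactly pairs $(x;s)$ with $\sigma^n x=x$, $x\in[a]$ and $s=S_n\tau(x)$. Along such an orbit $\int_0^s f(\theta_t(x,0))\,dt=S_n\Delta_f(x)$. Write $Z_a(T-d,T)$ for the sum over $\mathrm{FPer}_a(T-d,T)$ of $\exp(S_n\Delta_f(x))$. We order the proof as follows: first the equality of the variational pressure with $t^*:=\inf\{t: P(\Delta_f-t\tau)\le 0\}$ (and with the sup-formula), then the compact approximation, and finally the periodic orbit limit.

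\emph{Step 1: the variational formula.} The function $t\mapsto P(\Delta_f-t\tau)$ is convex and, since $\inf\tau>0$, strictly decreasing where finite (slope $\le -\inf\tau$). Hence the infimum and supremum formulas agree once we handle the possible jump to $+\infty$. This is a convexity argument, using lower semicontinuity of the pressure in $t$ coming from the compact approximation property.

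For $\nu=AK(\mu)\in\M(\Theta)$, Abramov and Kac give
\[
h(\nu)+\int f\,d\nu=\frac{h_\mu(\sigma)+\int\Delta_f\,d\mu}{\int\tau\,d\mu}.
\]
This quantity is $\ge t$ iff $h_\mu+\int(\Delta_f-t\tau)\,d\mu\ge 0$. The variational principle on $\Sigma$ therefore shows that $P_\Theta(f)\ge t$ whenever $P(\Delta_f-t\tau)>0$, and conversely. Some care is needed with measures where $\int\tau\,d\mu=\infty$; these are approximated by measures on compact invariant sets. This yields $P_\Theta(f)=t^*$.

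\emph{Step 2: compact sets.} The formula $\sup_K P_{\Theta|K}(f)$ follows from the same identity applied to compact invariant subshifts $K'\subset\Sigma$ (finite alphabet subsystems). On these the roof is bounded and the classical Bowen--Ruelle result gives $P_{\Theta|K}(f)=$ the root of $P_{K'}(\Delta_f-t\tau)=0$. We then use $P(\cdot)=\sup_{K'}P_{K'}(\cdot)$.

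\emph{Step 3: periodic orbits, the main obstacle.} We pass to the Laplace transform $\eta(t)=\sum_n\sum_{x\in\mathrm{Per}_a(n)}e^{S_n(\Delta_f-t\tau)(x)}$, which converges for $t>t^*$ and diverges for $t<t^*$ because its exponential rate is $P(\Delta_f-t\tau)$. This gives the $\limsup$ upper bound $\le t^*$ for the window sums: $Z_a(T-d,T)e^{-t(T-d)}\cdot$const$\le\eta(t)$, up to the bounded distortion $e^{\pm tV}$, with $V=\sum_{n\ge 2}V_n(\tau)$.

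For the lower bound we restrict to a compact (finite alphabet, mixing) subsystem containing $a$ with pressure root close to $t^*$. There we need orbits whose lengths $S_n\tau$ land in every window of width $d$ eventually. This is the delicate point: if $\tau$ is cohomologous to a lattice-valued function, windows narrower than the lattice step could miss lengths. I would handle it by concatenating, via mixing, a fixed loop at $a$ with variable returns. Summable variations make the lengths of $w^k u$ shift by approximately linear amounts, with flexible combinations of two loops of rationally independent lengths, or the fact that $d>0$ is fixed while $\tau$ has bounded distortion. If this fails in general, the fallback is a Tauberian-type argument through the renewal equation for the induced full shift on $[a]$.

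Given this, the window sums grow like $e^{T(t^*-\epsilon)}$, the limit exists and equals $t^*$, and the choice of $a$ is irrelevant by transitivity.
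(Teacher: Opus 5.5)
The paper gives no argument for this statement; it quotes it from \cite[Theorem 1.2]{jkl} (see also \cite{bi,ke,sav}). Your Steps 1 and 2 follow the standard Abramov--Kac root-formula argument behind those references and are fine in outline. For the equality of the $\inf$ and $\sup$ formulas you only need the strict decrease $P(\Delta_f-(t+s)\tau)\le P(\Delta_f-t\tau)-s\inf\tau$; no semicontinuity is required. The upper bound in Step 3 via $\eta(t)$ is also correct. No distortion term is needed, since for periodic orbits $s=S_n\tau(x)$ and $\int_0^s f(\theta_t(x,0))\,dt=S_n\Delta_f(x)$ hold exactly.

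The genuine gap is the lower bound $\liminf_{T\to\infty}\frac1T\log Z_a(T-d,T)\ge t^*$. This is what a limit requires, and you only list candidate strategies, ending with ``if this fails in general\ldots''. None of them can work in general.

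If $\tau$ is cohomologous to a function with values in $c\,\mathbb{Z}$, every periodic orbit has length in $c\,\mathbb{Z}$. Then there are no two loops with rationally independent lengths, and bounded distortion is irrelevant because periodic lengths are exact. In that case the asserted limit is in fact false for small $d$. Take $\tau\equiv 1$, $f=0$ and $d<1$: the set $\mathrm{FPer}_a(T-d,T)$ is empty whenever the fractional part of $T$ exceeds $d$. So the expression equals $-\infty$ along a sequence $T\to\infty$, and no Tauberian or renewal argument can repair this.

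What your set-up does give, for every $d>0$, is the $\limsup$ version, and you already have the key input. For $t<t^*$ one has $\eta(t)=\infty$. Group the terms of $\eta(t)$ according to the windows $((k-1)d,kd]$. If these window sums were at most $e^{t'kd}$ for all large $k$ and some $t'<t$, then $\eta(t)$ would be finite. Hence $\limsup_T\frac1T\log Z_a(T-d,T)\ge t$ for every $t<t^*$, which together with your upper bound gives $\limsup=t^*$.

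Upgrading this to a limit requires an extra hypothesis, such as $\tau$ being non-lattice, and then an actual argument. Your sketch glosses over one point. The length of a concatenated periodic word, $S\tau(\overline{wc})$, differs from $S\tau(\overline{w})+S\tau(\overline{c})$ by a junction error of order $\sum_n V_n(\tau)$, which may exceed $d$. So ``lengths shift by approximately linear amounts'' does not place orbits in windows of width $d$. One must first make the length split as $F(w)+G(c)$ up to a small tail $\sum_{n>M}V_n(\tau)$. This can be done, for instance, by padding the correcting words with fixed long blocks. Only then can a finite family of correctors with $d/2$-dense values of $G$ move exponentially many orbits, of total weight $e^{T(t^*-\epsilon)}$, into every window.
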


\begin{remark}\label{rem:positivepressure}
In \cite{ke} the above result is stated with the summation running over  $\mathrm{FPer}_a(T)$. 
Since $\mathrm{FPer}_a(T)$ is increasing in $T$, this only makes sense when $P_\Theta(f)\ge0$. On the other hand, it follows from Theorem \ref{thm: flow pres} that if $P_\Theta(f)\ge 0$, then for every $\epsilon>0$ there exists $T_0$ such that if $T\ge T_0$, then
$$e^{T(P_\Theta(f)+\epsilon)}\ge \sum_{(x;s)\in \mathrm{FPer}_a(T-d,T)} \exp\big( \int_0^s f(\theta_t(x,0))    \, dt\big),$$
and therefore 
$$C+e^{T(P_\Theta(f)+\epsilon)}\sum_{n\ge 0}e^{-nd(P_\Theta(f)+\epsilon)}\ge \sum_{(x;s)\in \mathrm{FPer}_a(T)} \exp\big( \int_0^s f(\theta_t(x,0))    \, dt\big),$$
where $C=\sum_{(x;s)\in \mathrm{FPer}_a(T_0)} \exp( \int_0^s f(\theta_t(x,0))    \, dt)$. We conclude that
$$P_{\Theta}(f)=\lim_{T \to \infty} \frac{1}{T} \log \left(\sum_{(x;s)\in \mathrm{FPer}_a(T)} \exp\left( \int_0^s f(\theta_t(x,0))    \, dt \right)\right).$$

\end{remark}

\begin{remark}\label{rmk:bipPer}

If, in addition to the assumptions in Theorem \ref{thm: flow pres}, the system $(\Sigma, \sigma)$ has the BIP property, then the following equality hold:
\begin{eqnarray*}
P_{\Theta}(f)&=&\lim_{T \to \infty} \frac{1}{T} \log \left(\sum_{(x;s)\in\text{FPer}(T-d,T)} \exp\left( \int_0^s f(\theta_t(x,0)) \,  dt \right) \right). \end{eqnarray*}
In other words, we sum over all periodic orbits rather than only those intersecting $[a] \times\{0\}$ (see \cite[Corollary 3.23]{jkl}).
\end{remark}

 Under the assumptions of Theorem \ref{thm: flow pres}, the potential 
$f:Y \to \R$ admits at most one equilibrium state. Moreover, $f$ has a 
unique equilibrium state if and only if 
\[
P(\Delta_f - P_{\Theta}(f)\tau) = 0,
\]
the potential $\Delta_f - P_{\Theta}(f)\tau$ admits a unique equilibrium state 
$\mu \in \M_{\tau}(\sigma)$, and $\int \Delta_f \, d\mu < \infty.$
In this case, the equilibrium state of $f$ is given by $AK(\mu)$; 
see \cite[Theorem~4]{bi} and \cite[Theorems~3.4 and 3.5]{ijt}.

The \emph{entropy} of the suspension semi-flow, denoted by $h_{top}(\Theta)$,  is defined as the pressure of the constant function equal to zero. Note that by Theorem \ref{thm: flow pres}, 
\begin{eqnarray*}
h_{top}(\Theta) &=&\lim_{T \to \infty} \frac{1}{T} \log \#\text{FPer}_a(T-d,T) \\
&=& \inf \{ t \in \R : P(-t \tau) \leq 0 \}.
\end{eqnarray*}
In particular, if $\tau\in\Psi$, then $(Y,\Theta)$  has finite entropy.

The following result is analogous to Lemma \ref{lem:c0} and will be used in the proof of the large deviation upper bound for the suspension semi-flow.

\begin{lemma}\label{lem:presC0} Let $(\Sigma, \sigma)$ be a topologically mixing countable Markov shift and $\tau:\Sigma \to \R$ a function of summable variations and bounded away from zero. Let $(Y, \Theta)$ be the associated suspension semi-flow and  $f, g: Y \to \R$  functions such that $\Delta_f$  has summable variations, $P_{\Theta}(f)<\infty$, and $\Delta_g\in \emph{UC}_{b,\rho}(\Sigma)$.  Let  $a\in\N$ and $d>0$, then,
\begin{align}\label{for:11}
P_{\Theta}(f+g) = \lim_{T \to \infty} \frac{1}{T} \log \left(\sum_{(x;s)\in \emph{FPer}_a(T-d,T)} \exp\left( \int_0^s (f+g)(\theta_t(x,0)) \, dt \right) \right).
 \end{align}
\end{lemma}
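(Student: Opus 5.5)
This will be proved by approximation, following the proof of Lemma \ref{lem:c0}: we reduce to Theorem \ref{thm: flow pres}, which already gives \eqref{for:11} whenever the potential $h$ on $Y$ satisfies that $\Delta_h$ has summable variations. The key observation is that both sides of \eqref{for:11} depend only on $\Delta_{f+g}=\Delta_f+\Delta_g$, and each is monotone and Lipschitz in $\Delta_g$ with respect to a $\tau$-weighted sup norm.

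For the periodic sum, if $(x;s)\in\mathrm{FPer}_a(T-d,T)$ with $\sigma^n x=x$ and $s=S_n\tau(x)$, then
$$\int_0^s (f+g)(\theta_t(x,0))\,dt = S_n\Delta_f(x)+S_n\Delta_g(x).$$

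\textbf{Step 1 (approximation).} By Lemma \ref{lem:appr}, choose $\psi_k$ with summable variations such that $\|\Delta_g-\psi_k\|_\infty\le 1/k$. By Remark \ref{rem:ex}, there exist continuous $g_k$ on $Y$ with $\Delta_{g_k}=\psi_k$. Then $\Delta_{f+g_k}=\Delta_f+\psi_k$ has summable variations, so Theorem \ref{thm: flow pres} applies to $f+g_k$. This gives \eqref{for:11} for $f+g_k$, together with
$$P_\Theta(f+g_k)=\inf\{t: P(\Delta_f+\psi_k-t\tau)\le 0\}.$$

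\textbf{Step 2 (comparison of the periodic sums).} Since $\inf\tau\ge c>0$ and $s=S_n\tau(x)\le T$, we have $n\le T/c$. Hence
$$|S_n\Delta_g(x)-S_n\psi_k(x)|\le \frac{n}{k}\le \frac{T}{ck}.$$
The $\limsup$ and $\liminf$ of the right-hand side of \eqref{for:11} for $f+g$ therefore lie within $1/(ck)$ of $P_\Theta(f+g_k)$.

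\textbf{Step 3 (comparison of the pressures).} Using the variational definition, Abramov and Kac give, for $\nu=AK(\mu)$,
$$\int (g-g_k)\,d\nu=\frac{\int(\Delta_g-\psi_k)\,d\mu}{\int\tau\,d\mu},$$
whose absolute value is at most $1/(ck)$. Consequently $|P_\Theta(f+g)-P_\Theta(f+g_k)|\le 1/(ck)$. Integrability issues are harmless because $\Delta_g$ is bounded, and finiteness follows from $P_\Theta(f)<\infty$.

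Letting $k\to\infty$ shows that the limit exists and equals $P_\Theta(f+g)$. The main subtlety I expect is Step 1: $g$ itself need not be close to $g_k$ in sup norm on $Y$, only their fiber integrals are close. This is why the argument must pass through $\Delta$ in both Steps 2 and 3 rather than use a $\mathrm C^0$-continuity of $P_\Theta$ on $Y$. One also needs the bound $n\le T/c$, which is where $\inf\tau>0$ enters.
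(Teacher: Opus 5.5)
Your proposal is correct and follows essentially the same route as the paper's proof. The paper also approximates $\Delta_g$ in sup norm by a summable-variations function and lifts it to $Y$ via Remark \ref{rem:ex}. It then compares the periodic sums using $n\le T/c$ and the pressures via Kac's formula, with an $O(\epsilon)$ error in each, before letting $\epsilon\to 0$.
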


\begin{proof}  By Lemma \ref{lem:appr}, we can approximate $\Delta_g$ by potentials with summable variations. Fix $\epsilon>0$, and let $h:\Sigma\to\mathbb{R}$ be a function with summable variations such that $\|h - \Delta_g\|_\infty < \epsilon$. By Remark \ref{rem:ex} there exists  $g_0 :Y\to\R$ such that $\Delta_{g_0} = h$.  In particular, $\|\Delta_{g_0}- \Delta_g\|_\infty < \epsilon$ and,  by Theorem \ref{thm: flow pres}, formula (\ref{for:11}) holds for $f+g_0$.

By Kac's formula, 
$$\left|\int g \, d\nu -\int g_0  \, d\nu \right| \le \frac{1}{\inf \tau}\left\|\Delta_g-\Delta_{g_0} \right\|_\infty,$$ 
for every $\nu\in\M(\Theta)$. Therefore, 
\begin{align}\label{eq:g01}
\left|P_\Theta(f+g)-P_\Theta(f+g_0) \right|\le \frac{1}{\inf \tau}\epsilon.
\end{align}

Fix $c \in (0, \inf \tau)$ and let $(x, 0) \in Y$ be a point such that $\theta_s(x, 0) = (x, 0)$, for some $T-d < s \le T$. Then,  there exists $n \in \mathbb{N}$ such that $\sigma^n(x) = x$ and $S_n \tau(x) = s$. Note that $nc \le s$, hence $n \le \frac{T}{c}$. Observe that 
$\int_0^s g(\theta_t(x,0)) \, dt=S_n\Delta_g(x)$, and $\int_0^s g_0(\theta_t(x,0)) \, dt=S_n \Delta_{g_0}(x).$ It follows that 
$$\left|\int_0^s (f+g)(\theta_t(x,0)) \, dt-\int_0^s (f+g_0)(\theta_t(x,0)) \, dt\right|\le n\epsilon\le \frac{\epsilon T}{c}.$$
Using this inequality and the fact that formula (\ref{for:11}) holds for $f+g_0$, we obtain
$$\bigg|\limsup_{T \to \infty} \frac{1}{T} \log \left(\sum_{(x;s)\in \text{FPer}_a(T-d,T)} \exp\left( \int_0^s (f+g)(\theta_t(x,0)) \, dt \right) \right)-P_\Theta(f+g_0)\bigg|\le \frac{\epsilon}{c}$$
Combining with (\ref{eq:g01}), we get
\begin{align*}
\left|\limsup_{T \to \infty} \frac{1}{T} \log \left(\sum_{(x;s)\in \text{FPer}_a(T-d,T)} \exp\left( \int_0^s (f+g)(\theta_t(x,0)) \, dt \right) \right)-P_\Theta(f+g)\right| &\le&\\ \epsilon\left(\frac{1}{c}+\frac{1}{\inf \tau}\right).
\end{align*}
An analogous bound holds for the $\liminf$. Since $\epsilon>0$ is arbitrary, we conclude that formula (\ref{for:11}) follows.
\end{proof}

\subsection{Entropy and pressure at infinity} \label{sec:pre_imfty_flow}
The notion of \emph{entropy at infinity}  for suspension semi-flows defined over countable Markov shifts was introduced by the authors in \cite{irv}. Measures the amount of disorder that measures that escape the system take with them. Recall that when a sequence of measures $(\nu_n)_n$ converges on cylinders to the zero measure we say that it converges on cylinders to $0$. The entropy at infinity of the suspension semi-flow $(Y,\Theta)$  is defined by
\begin{equation*}
h_{\infty}(\Theta)= \sup \left\{\limsup _{n \to \infty}h_{\nu_n}(\Theta): \nu_n \in \M(\Theta),  (\nu_n)_n \text{ converges on cylinders to  } 0 \right\}.
\end{equation*}
Note that, in general, $h_{top}(\Theta)\ge h_\infty(\Theta)$.  Analogously, we define the \emph{pressure at infinity} of a continuous function  $f:Y\to\R$  as follows
\begin{equation*}
P_{\Theta,\infty}(f)= \sup \left\{\limsup _{n \to \infty} \left( h_{\nu_n}(\Theta)+\int f \, d\nu_n \right) \right\},
\end{equation*}
where the supremum is taken over all sequences $(\nu_n)_n$  of $\Theta-$invariant probability measures  such that  $(\nu_n)_n$ converges on cylinders to the zero measure.

\begin{definition}  
Let  $f \in \text{C}_b(Y)$ be a potential such that $\Delta_f\in\mathrm{UC}_d(\Sigma)$ and $V_2(\Delta_f)<\infty$. We say that $f$ is \emph{strongly positive recurrent} (SPR) if $P_{\Theta,\infty}(f)<P_\Theta(f)$. If $f=0$ is SPR, we say that $(Y, \Theta)$ is strongly positive recurrent. 
\end{definition}

\begin{remark}\label{rem:forder}
It follows from Remark \ref{thm_compacidad} and \cite[Theorem 8.1]{v} that if $f$ is SPR, then $f$ admits at least one equilibrium state. Moreover, if $(\nu_n)_n$ is a sequence in $\mathcal{M}(\Theta)$ such that $$P_\Theta(f)=\lim_{n\to\infty}\bigg(h_{\nu_n}(\Theta)+\int f d\nu_n\bigg),$$ then $(\nu_n)_n$ has a subsequence that converges to an equilibrium state of $f$. If, in addition, $\Delta_f$ has summable variations, then the equilibrium state is unique (see  \cite[Theorems~3.4 and 3.5]{ijt}), and the sequence $(\nu_n)_n$ converges in the weak$^\ast$ topology to the unique equilibrium state.
\end{remark}

\begin{lemma}\label{lem:deri}  Let $(\Sigma,\sigma)$ be a transitive countable Markov shift, $\tau \in \Psi$ and $(Y,\Theta)$ be the associated suspension semi-flow.  Let $f\in \mathrm{C}_b(Y)$ be a SPR potential such that $\Delta_f$ has summable variations and $g\in \mathrm{C}_b(Y)$  such that $\Delta_g\in \emph{UC}_{b,\rho}(\Sigma)$. Define $h(t) =P _\Theta(f + tg)$. Then, $h'(0) = \int g \, d\nu_f$, where $\nu_f$ is the equilibrium state of $f$.
\end{lemma}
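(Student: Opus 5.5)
We show that $h$ is differentiable at $0$ with derivative $\int g\,d\nu_f$ by proving matching bounds on the one-sided difference quotients. The key tools are convexity of $P_\Theta$ and the compactness/SPR machinery of Remark~\ref{rem:forder}.

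\emph{Lower bound.} Since $\nu_f$ is an equilibrium state, the variational definition of $P_\Theta$ gives, for every $t$,
\[
h(t)\ge h_{\nu_f}(\Theta)+\int f\,d\nu_f+t\int g\,d\nu_f=h(0)+t\int g\,d\nu_f .
\]
Here $\int g\,d\nu_f$ is finite because $g$ is bounded. So $\int g\,d\nu_f$ is a subgradient of the convex function $h$ at $0$. Hence
\[
D^+h(0)\ge \int g\,d\nu_f\ge D^-h(0),
\]
and it remains to prove $D^+h(0)\le \int g\,d\nu_f$ and $D^-h(0)\ge\int g\,d\nu_f$. The second follows from the first applied to $-g$, so we only treat $t\downarrow 0$.

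\emph{Upper bound.} Take $t_n\downarrow 0$. Since $|h(t)-h(0)|\le |t|\,\|g\|_\infty$, the value $h(t_n)$ is finite. Choose $\nu_n\in\M(\Theta)$ with
\[
h_{\nu_n}(\Theta)+\int (f+t_ng)\,d\nu_n\ge h(t_n)-t_n^2 .
\]
Then
\[
h_{\nu_n}(\Theta)+\int f\,d\nu_n\ge h(t_n)-t_n\|g\|_\infty-t_n^2\to h(0)=P_\Theta(f).
\]
The left side is also at most $P_\Theta(f)$, so $(\nu_n)_n$ is a maximizing sequence for $f$. By Remark~\ref{rem:forder}, which uses that $f$ is SPR and $\Delta_f$ has summable variations, $\nu_n\to\nu_f$ in the weak$^\ast$ topology.

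Now, using $h_{\nu_n}(\Theta)+\int f\,d\nu_n\le h(0)$,
\[
\frac{h(t_n)-h(0)}{t_n}\le \frac{h_{\nu_n}(\Theta)+\int f\,d\nu_n-h(0)}{t_n}+\int g\,d\nu_n+t_n\le \int g\,d\nu_n+t_n .
\]
Since $g\in \mathrm{C}_b(Y)$ and $\nu_n\to\nu_f$ weak$^\ast$, we get $\int g\,d\nu_n\to\int g\,d\nu_f$. Therefore $\limsup_n (h(t_n)-h(0))/t_n\le\int g\,d\nu_f$.

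\emph{Main obstacle.} The delicate step is the convergence $\nu_n\to\nu_f$. It needs the SPR hypothesis, so that no mass escapes along a maximizing sequence (this is \cite[Theorem 8.1]{v} together with Remark~\ref{thm_compacidad}), and it needs uniqueness of the equilibrium state, so that the whole sequence converges (\cite[Theorems 3.4, 3.5]{ijt}).

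The hypothesis $\Delta_g\in\mathrm{UC}_{b,\rho}(\Sigma)$ enters only to make $P_\Theta(f+tg)$ well behaved, via Lemma~\ref{lem:presC0}. If instead one argued through a fixed-$n$ approximation of the pressure, one would first replace $g$ by $g_0$ with $\Delta_{g_0}$ of summable variations, as in the proof of Lemma~\ref{lem:presC0}. The resulting error in $h$ is $O(\epsilon |t|)$, which is harmless after dividing by $t$ and letting $\epsilon\to 0$.
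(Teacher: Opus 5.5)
Your proof is correct and follows essentially the same route as the paper. Both arguments sandwich $(h(t)-h(0))/t$ between $\int g\,d\nu_f$ and the integral of $g$ against a maximizer for $f+tg$, and then use Remark~\ref{rem:forder} to get weak$^\ast$ convergence of those maximizers to $\nu_f$. The only difference is this: the paper uses genuine equilibrium states $\nu_t$ of $f+tg$, which requires checking that $f+tg$ is SPR for small $|t|$ and uses the hypothesis on $\Delta_g$ to meet the regularity in the SPR definition. Your $t_n^2$-approximate maximizers avoid that check, so your argument never actually uses $\Delta_g\in\mathrm{UC}_{b,\rho}(\Sigma)$, and your closing remark about Lemma~\ref{lem:presC0} is unnecessary.
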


\begin{proof} By assumption $\Delta_g\in \text{UC}_{b,\rho}(\Sigma)$, and therefore $\Delta_g\in\mathrm{UC}_d(\Sigma)$ and $V_2(\Delta_g)<\infty$ (see Lemma \ref{lem:appr}). Note that there exists $\epsilon>0$ such that $f+tg$ is SPR for every $t\in (-\epsilon,\epsilon)$. Denote by $\nu_t$ an equilibrium state of $f+tg$ (see Remark \ref{rem:forder}). Note that 
\begin{align*}
\lim_{t\to 0} \left( h_{\nu_t}(\Theta)+\int f \, d\nu_t \right) &=\lim_{t\to 0} \left( h_{\nu_t}(\Theta)+\int (f+tg) \, d\nu_t \right)\\
&=\lim_{t\to 0} P_\Theta(f+tg)\\
&=P_\Theta(f)
\end{align*}
It follows by Remark \ref{rem:forder} that $(\nu_t)_t$ converges in the weak$^\ast$ topology to $\nu_0$ as $t\to 0$. In particular, we have that $\lim_{t\to0} \int g \, d\nu_t=\int g \, d\nu_0$. Note that by the variational principle have that if $t\in (0, \epsilon)$, then 
$$ \int g \, d\nu_0 \le \frac{P_\Theta(f+tg)-P_\Theta(f)}{t}\le \int g \, d\nu_t.$$
Similarly, if $t\in (-\epsilon,0)$, then 
$$ \int g \, d\nu_t \le \frac{P_\Theta(f+tg)-P_\Theta(f)}{t}\le \int g \, d\nu_0.$$
Taking limit $t\to 0$ we obtain the desired formula.
\end{proof}

\subsection{The dual variational principle and the class \texorpdfstring{$\cR$}{cR}} 
Motivated by its relevance in statistical mechanics, convex analysis has played a central role in the study of thermodynamic formalism. Through the dual variational principle, equilibrium sates arise as the subdifferentials of the pressure functional, a fact central to its applications. In this section, we prove the dual variational principle for suspension semi-flows whose roof function has uniform growth at infinity. 

 \begin{definition} \label{def:R} We say that $\tau:\Sigma\to (0,\infty)$ belongs to the class $\cR$ if the following properties hold: 
\begin{enumerate}
\item \label{Ta} $\tau$ is bounded away from zero and has summable variations,
\item \label{Tb} $\lim_{k\to \infty}\inf_{x\in [k]}\tau(x)=\infty.$
\end{enumerate}
\end{definition}

The class of suspension semi-flows with roof functions in $\cR$ was studied in \cite{iv} under a weaker regularity assumption. In this work, we consider roof functions with summable variations. This class arises naturally in the study of systems with the BIP property, as shown in the next lemma.

\begin{lemma} \label{lem:bip}
Let $(\Sigma,\sigma)$ be a topologically mixing countable Markov shift with the
BIP property. Let $\phi:\Sigma\to \R$ be a function with summable
variations satisfying $\inf \phi > 0$, $V_1(\phi)<\infty$ and $P(-\phi)<\infty$. Then
$\phi\in \mathcal{R}$.
\end{lemma}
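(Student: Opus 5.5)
We need to show $\lim_{k\to\infty}\inf_{x\in[k]}\phi(x)=\infty$; the other conditions in Definition~\ref{def:R} are hypotheses. The plan is to argue by contradiction, using the BIP property to build, out of infinitely many symbols on which $\phi$ stays bounded, periodic orbits whose number grows too fast. Those orbits will force $P(-t\phi)=\infty$ for every $t>0$. This contradicts $P(-\phi)<\infty$: convexity, or directly the monotonicity $P(-t\phi)\le P(-\phi)$ for $t\ge 1$ (valid since $\phi>0$), shows it suffices to get $P(-\phi)=\infty$, which is the case $t=1$.

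So suppose there are $M>0$ and infinitely many symbols $k_1<k_2<\dots$ with $\inf_{x\in[k_j]}\phi(x)\le M$. Since $V_1(\phi)<\infty$, we get $\sup_{x\in[k_j]}\phi(x)\le M+V_1(\phi)=:M'$ for every $j$. Let $B$ be the finite BIP set and $L:=\max_{b\in B}\sup_{[b]}\phi$. Each $\sup_{[b]}\phi\le \inf_{[b]}\phi+V_1(\phi)<\infty$, so $L$ is finite. For each $j$ choose $b_j,b'_j\in B$ with $b_jk_jb'_j$ admissible. Since $B$ is finite and the shift is topologically mixing, the pigeonhole principle lets us pass to an infinite subfamily with common $b,b'$. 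Then fix a single admissible word $w$ from $b'$ to $b$, of some length.

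Now consider, for $n$ large and any choice of $j_1,\dots,j_n$ among the first $N$ indices of the subfamily, the periodic point obtained by repeating the block $b\,k_{j_1}\,b'\,w\,b\,k_{j_2}\,b'\,w\cdots$. For fixed $n$ these points are distinct and have a common period $n\ell$, where $\ell$ is the block length. Using Sarig's characterization of the pressure (summable variations), we may localize at the cylinder $[b]$; by transitivity $P$ does not depend on that choice. Along each such orbit every coordinate is either some $k_{j_i}$ or a symbol from the fixed finite set of symbols of $b,b',w$, and $\phi$ is bounded by a constant $C$ on each of these finitely many cylinders. Hence $S_{n\ell}(-\phi)\ge -n\ell\max(M',C)$, and so
\[
\sum_{\sigma^{n\ell}x=x,\ x\in[b]}e^{S_{n\ell}(-\phi)(x)}\ge N^n e^{-n\ell\max(M',C)}.
\]
Therefore $P(-\phi)\ge \frac{1}{\ell}\log N-\max(M',C)$ for every $N$. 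Letting $N\to\infty$ gives $P(-\phi)=\infty$, the desired contradiction.

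The main obstacle is the bookkeeping in the previous paragraph. One must make sure the Gurevich pressure, taken along the subsequence of times $n\ell$, controls $P$ (it does, since for mixing shifts it is a genuine limit). One must also check that the relevant Birkhoff sums are bounded using only $V_1$ and not a full sup bound on $\phi$; here the finiteness of $V_1(\phi)$ is exactly what turns a bound on $\inf_{[k]}\phi$ into a bound on all of $[k]$.
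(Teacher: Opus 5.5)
Your proof is correct, and it reaches the contradiction by a different route from the paper's.

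\textbf{The paper's route.} It uses BIP and mixing to find a finite family $\mathcal W$ of words of a common length $p$ with $a\,w(a)\,a$ admissible for every symbol $a$. It then attaches to each bad symbol $a_j$ one periodic point $x_j$ of the fixed period $p+1$. The partition sum $Z_{n(p+1)}(-\phi)$ therefore contains infinitely many terms, each bounded below by $e^{-n(M+V_1(\phi)+C_{\mathcal W})}$, so it is infinite. The points $x_j$ lie in different cylinders $[a_j]$, so this is the unlocalized sum over all periodic points. The paper is thus implicitly invoking Sarig's BIP result that $\chi_{[i]}$ can be dropped from the Gurevich pressure, together with the fact that an infinite partition sum then forces $P(-\phi)=\infty$.

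\textbf{Your route.} You pigeonhole to a common pair $(b,b')$ and use only $N$ bad symbols at a time. Concatenating free choices of them gives $N^n$ distinct orbits of period $n\ell$, all lying in the single cylinder $[b]$. This yields the quantitative bound $P(-\phi)\ge \ell^{-1}\log N-\max(M',C)$ directly from the localized definition, with the limsup along the times $n\ell$ sufficing. You need only transitivity for the connecting word, and you avoid the appeal to the unlocalized formula.

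\textbf{Comparison.} The paper's argument is shorter. Yours is more self-contained and consistent with how $Z_n$ is defined (localized at $[a]$) elsewhere in the paper.

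\textbf{One notational fix.} Take $w$ to be a word with $b'\,w\,b$ admissible, i.e.\ the interior of a path from $b'$ to $b$. It should not be a word that itself begins with $b'$ and ends with $b$. Otherwise your block $b\,k\,b'\,w$ would require $b'b'$ and $bb$ to be admissible.
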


\begin{proof}
Since $\phi$ is bounded away from zero and has summable variations by
assumption, it remains only to prove that
\[
        \lim_{k\to\infty}\inf_{x\in[k]}\phi(x)=+\infty .
\]
We argue by contradiction. Suppose that this limit does not hold. Then there
exist a constant $M>0$ and an infinite sequence of distinct symbols
$(a_j)_{j\ge1}$ such that for every $ j\ge1$ we have
\[  \inf_{x\in[a_j]}\phi(x)\le M.  \]
Since $V_1(\phi)<\infty$, it follows that for every $ j\ge1$
\[ \sup_{x\in[a_j]}\phi(x)\le M+V_1(\phi).       \]
We now use the BIP property and topological mixing. By BIP, there exists a
finite set $B$ of symbols such that for every symbol $a$ there are
$b(a),b'(a)\in B$ with
\[
        b(a)\,a\,b'(a)
\]
admissible. Since the shift is topologically mixing and $B$ is finite, there
exists an integer $p\ge1$ and a finite collection $\mathcal W$ of admissible
words of length $p$ such that, for every symbol $a$, one can choose
$w(a)\in\mathcal W$ with
\[
        a\,w(a)\,a
\]
admissible. Set
\[
        C_{\mathcal W}:=
        \max_{w\in\mathcal W}\sup_{x\in[w]} S_p\phi(x)<\infty .
\]
This quantity is finite because $\mathcal W$ is finite and $V_1(\phi)<\infty$. For each $j\ge1$, let $w_j:=w(a_j)$. The word $a_j\,w_j $
determines a periodic point $x_j$ of period $p+1$ whose itinerary repeats the
block $a_jw_j$. Along one period we have
\[
        S_{p+1}\phi(x_j)
        \le
        M+V_1(\phi)+C_{\mathcal W}.
\]
Hence, for every $n\ge1$,
\[
        S_{n(p+1)}\phi(x_j)
        \le
        n\bigl(M+V_1(\phi)+C_{\mathcal W}\bigr).
\]
Therefore
\[
        \exp\bigl(-S_{n(p+1)}\phi(x_j)\bigr)
        \ge
        \exp\left(-n\bigl(M+V_1(\phi)+C_{\mathcal W}\bigr)\right).
\]
Since the symbols $a_j$ are distinct, the periodic points $x_j$ are distinct.
Thus, for every $n\ge1$, the partition sum at time $n(p+1)$ satisfies
\[
\begin{aligned}
        Z_{n(p+1)}(-\phi)
        &\ge
        \sum_{j=1}^\infty
        \exp\bigl(-S_{n(p+1)}\phi(x_j)\bigr)       
        &\ge
        \sum_{j=1}^\infty
        \exp\left(-n\bigl(M+V_1(\phi)+C_{\mathcal W}\bigr)\right)
        =
        \infty .
\end{aligned}
\]
Consequently $P(-\phi)=\infty$, contradicting the hypothesis
$P(-\phi)<\infty$. This contradiction proves that
\[
        \lim_{k\to\infty}\inf_{x\in[k]}\phi(x)=+\infty .
\]
Therefore $\phi\in \mathcal{R}$.
\end{proof}

Let $(\Sigma,\sigma)$ be a topologically mixing countable Markov shift and $\tau\in \cR$. Denote by $(Y,\Theta)$ the associated suspension semi-flow. In \cite[Lemma 6.9 and Lemma 6.10]{iv}, it is shown that, in this setting,  the cylinder topology takes a simpler form, which allows one to consider a larger class of test functions. Define
\begin{equation*}
\text{C}^{\cR}_0(Y)= \left\{ g:Y \to\R \text{ continuous and bounded}: \Delta_g\in \text{UC}_{b,\rho}(\Sigma)	\right\}.
\end{equation*}

\begin{lemma}\label{prop:test3}
Suppose that $\tau\in \cR$. Let $(\nu_n)_n, \nu$ be measures in $\M(\Theta)$ and $\lambda \in [0,1]$.  Then, $(\nu_n)_n$ converges on cylinders to $\lambda \nu$ if and only if for every $g\in \emph{C}^{\cR}_0(Y)$ we have
\begin{equation*}
\lim_{n \to \infty} \int g\, d \nu_n= \int g \, d (\lambda\nu).
\end{equation*} 
 \end{lemma}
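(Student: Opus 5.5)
We will follow the proof of Lemma~\ref{prop:test}. Set $\mu_n=(AK)^{-1}(\nu_n)$ and $\mu=(AK)^{-1}(\nu)$. By Kac's formula, the convergence $\int g\,d\nu_n\to\int g\,d(\lambda\nu)$ is equivalent to
\[
\frac{\int \Delta_g\, d\mu_n}{\int \tau\, d\mu_n}\longrightarrow \lambda\,\frac{\int \Delta_g\, d\mu}{\int \tau\, d\mu}.
\]
So the statement becomes a claim about the base measures $\mu_n$, tested against functions in $\mathrm{UC}_{b,\rho}(\Sigma)$ rather than $\mathrm{UC}^0_{b,\rho}(\Sigma)$. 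The task is to explain why, for $\tau\in\cR$, the value $\bar\Delta_g(\bar\infty)$ does not matter. We will use Lemma~\ref{cyl_conver} throughout, arguing along subsequences with subsubsequences.

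\emph{Forward direction.} Assume $\nu_n\to\lambda\nu$ on cylinders and fix $g\in\mathrm{C}^{\cR}_0(Y)$. Pass to a subsubsequence along which, by compactness of $\M(\bar\sigma)$, $\mu_{n_k}\to\lambda_1\mu+(1-\lambda_1)\delta_{\bar\infty}$ weak$^\ast$ in $\M(\bar\sigma)$ (this uses \cite[Theorem 3.5]{iv2}). By Lemma~\ref{cyl_conver}, along this subsubsequence either $\int\tau\,d\mu_{n_k}\to\lambda_2\int\tau\,d\mu$, or $\int\tau\,d\mu_{n_k}\to\infty$ when $\lambda=0$. Write $c=\bar\Delta_g(\bar\infty)$, so that $\int\Delta_g\,d\mu_{n_k}\to\lambda_1\int\Delta_g\,d\mu+(1-\lambda_1)c$.
\begin{itemize}
\item If $\int\tau\,d\mu_{n_k}\to\infty$, the ratio tends to $0$ because $\Delta_g$ is bounded.
\item Otherwise, we need $(1-\lambda_1)c=0$.
\end{itemize}

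\emph{Key step.} For $\tau\in\cR$, escape of mass in the base forces $\int\tau\,d\mu_{n_k}\to\infty$. Indeed, $\tau\ge M$ on $\bigcup_{k\ge K}[k]$ with $M\to\infty$ as $K\to\infty$. The mass $(1-\lambda_1)$ that escapes to $\bar\infty$ is, asymptotically, carried by points whose first symbol lies in $\{k\ge K\}$ for every $K$. This is because a neighbourhood of $\bar\infty$ in $\bar\Sigma$ is contained in $\{x_1\ge K\}$, which follows from the definition of $\rho$. Hence
\[
\liminf_k \int\tau\,d\mu_{n_k}\ \ge\ \lambda_1\int\tau\,d\mu+(1-\lambda_1)M
\]
for every $M$. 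So if $\lambda_1<1$, then $\int\tau\,d\mu_{n_k}\to\infty$ and the ratio tends to $0=\lambda\cdot(\ldots)$ consistently. This last point needs checking: it requires that $\lambda=\lambda_1/\lambda_2$ be $0$ in this case, which Lemma~\ref{cyl_conver} permits via the first equivalence. If instead $\lambda_1=1$, the $c$-term vanishes and the ratio converges to $\int\Delta_g\,d\mu/(\lambda_2\int\tau\,d\mu)=\lambda\int g\,d\nu$. Since every subsequence has such a subsubsequence, the full limit holds.

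\emph{Converse.} The converse is immediate: $\mathrm{C}_0(Y)\subset\mathrm{C}^{\cR}_0(Y)$, so Lemma~\ref{prop:test} applies, since $\tau\in\cR$ with $\Delta$-finite entropy places us in its setting. Alternatively, one can cite \cite[Lemmas 6.9, 6.10]{iv} directly.

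The main obstacle is the key step: rigorously showing that mass escaping in $\bar\Sigma$ forces $\int\tau\,d\mu_{n_k}\to\infty$ when $\tau\in\cR$. The approach is the lower bound $\tau\ge M\chi_{\{x_1\ge K\}}$ combined with portmanteau applied to the closed set $\{x_1\le K\}\subset\bar\Sigma$.
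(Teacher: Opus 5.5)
Your argument is correct, but it is organised differently from the paper's.

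\textbf{Comparison with the paper.} The paper treats \cite[Lemmas 6.9 and 6.10]{iv} as black boxes. For $\tau\in\cR$ these say that $\nu_n\to 0$ on cylinders iff $\int\tau\,d\mu_n\to\infty$, and that $\nu_n\to\lambda\nu$ with $\lambda>0$ iff $\mu_n\to\mu$ weak$^\ast$ and $\int\tau\,d\mu_n\to\lambda^{-1}\int\tau\,d\mu$. With these, the forward direction is immediate. The converse is obtained by testing against $g$ with $\Delta_g\equiv 1$, which recovers the behaviour of $\int\tau\,d\mu_n$, and then applying the same two lemmas again.

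You instead reprove the half of those lemmas that is actually needed. You use Lemma~\ref{cyl_conver} together with the single estimate $\int\tau\,d\mu_{n_k}\ge\bigl(\inf_{j>K}\inf_{[j]}\tau\bigr)\,\mu_{n_k}(\{x_1>K\})$. This shows that any loss of mass in the base ($\lambda_1<1$) forces $\int\tau\,d\mu_{n_k}\to\infty$. Your version is more self-contained and makes visible why $\bar\Delta_g(\bar\infty)$ is irrelevant when $\tau\in\cR$. The paper's version is shorter but hides this behind the citation.

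\textbf{Points to clean up.}

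First, the subsubsequence should come from Lemma~\ref{cyl_conver} (cylinder convergence to $\lambda_1\mu$), not from compactness of $\M(\bar\sigma)$. Compactness only produces some limit. Its identification as $\lambda_1\mu+(1-\lambda_1)\delta_{\bar\infty}$ comes from Lemma~\ref{cyl_conver} combined with \cite[Theorem 3.5]{iv2}. In fact you do not need $\bar\Sigma$ at all, because cylinder convergence directly gives $\mu_{n_k}(\{x_1\le K\})=\sum_{j\le K}\mu_{n_k}([j])\to\lambda_1\mu(\{x_1\le K\})\le\lambda_1$. Also, only the crude bound $\liminf_k\int\tau\,d\mu_{n_k}\ge(1-\lambda_1)\inf_{j>K}\inf_{[j]}\tau$ is needed; your sharper displayed inequality is not required.

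Second, the point you flag as ``needs checking'' is automatic.
\begin{itemize}
\item If $\lambda>0$, Lemma~\ref{cyl_conver} gives $\int\tau\,d\mu_{n_k}\to\lambda_2\int\tau\,d\mu<\infty$, so your key step rules out $\lambda_1<1$. Hence $\lambda_1=1$, and $\mu_{n_k}\to\mu$ weak$^\ast$ on $\Sigma$ by Remark~\ref{rem:weak=cyl}. The ratio then converges to $\int\Delta_g\,d\mu/(\lambda_2\int\tau\,d\mu)=\lambda\int g\,d\nu$, since $\lambda=1/\lambda_2$.
\item If $\lambda=0$, both alternatives in Lemma~\ref{cyl_conver} give $\int\tau\,d\mu_{n_k}\to\infty$, so the ratio tends to $0$.
\end{itemize}

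Third, Lemma~\ref{prop:test} is stated for $\tau\in\Psi$, and $\tau\in\cR$ alone does not give this. However, the converse needs no hypothesis on $\tau$ whatsoever. For a cylinder $C$, Remark~\ref{rem:ex} gives $g$ with $\Delta_g=c\,\chi_C$, and $c\,\chi_C\in\mathrm{UC}^0_{b,\rho}(\Sigma)\subset\mathrm{UC}_{b,\rho}(\Sigma)$. Then $\int g\,d\nu_n=c\,\mu_n(C)/\int\tau\,d\mu_n=\nu_n(C\times[0,c])$. So convergence against such $g$ is exactly convergence on cylinders, which is even shorter than the paper's $\Delta_g\equiv1$ route.
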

 
\begin{proof} Let $\nu_n= (\mu_n \times \text{Leb})/(\int \tau \, d \mu_n)$ and $\nu= (\mu \times \text{Leb})/(\int \tau \, d \mu)$.  By Kac's formula, the limit can be rewritten as \begin{equation}\label{eq:test2}
\lim_{n \to \infty}  \frac{\int \Delta_g \, d \mu_n }{\int \tau\, d \mu_n}= \lambda  \frac{\int \Delta_g \, d \mu}{\int \tau\, d \mu}.
 \end{equation}
Suppose $\lambda=0$. By \cite[Lemma 6.9]{iv}, $(\nu_n)_n$ converges on cylinders to the zero measure if and only if $\lim_{n\to\infty}\int \tau d\mu_n=\infty$. This implies that (\ref{eq:test}) holds for every $\Delta_g\in  \text{UC}_{b,\rho}(\Sigma)$. On the other hand, considering $\Delta_g=1$, we deduce the other implication. We now suppose that $\lambda\ne 0$. By \cite[Lemma 6.10]{iv}, $(\nu_n)_n$ converges on cylinders to $\lambda\nu$ if and only if $(\mu_n)_n$ converges in the weak$^\ast$ topology to $\mu$ and $\lim_{n\to\infty}\int \tau \, d\mu_n=\frac{1}{\lambda}\int \tau \, d\mu$. This implies that (\ref{eq:test}) holds for every $\Delta_g\in  \text{UC}_{b,\rho}(\Sigma)$. On the other hand, considering  $\Delta_g=1$, we deduce that $\lim_{n\to\infty}\int \tau \, d\mu_n=\frac{1}{\lambda}\int \tau \, d\mu$, and therefore $\lim_{n\to\infty}\int \Delta_g \, d\mu_n=\int \Delta_g \, d\mu$, for every $g\in\text{C}^{\cR}_0(Y)$. We conclude that $(\mu_n)_n$ converges in the weak$^\ast$ topology to $\mu$. \end{proof}

\begin{theorem} \label{thm:dual}
Let $(\Sigma, \sigma)$ be a transitive countable Markov shift  and  $\tau\in\Psi\cap \cR$. Let $(Y,\Theta)$ be the associated suspension semi-flow. If $f\in \mathrm{C}_b(Y)$ is such that $\Delta_f\in \mathrm{UC}_d(\Sigma)$ and $V_2(\Delta_f)<\infty$, then, for every $\nu \in \M(\Theta)$, we have
\begin{equation*}
h_\nu(\Theta)+\int f \, d\nu= \inf \left\{P_{\Theta}(f+g) - \int g \, d \nu :g\in C^\cR_0(Y)		\right\}.
\end{equation*}
\end{theorem}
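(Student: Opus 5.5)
The inequality ``$\le$'' follows from the variational definition of $P_\Theta$. For every $g\in C^\cR_0(Y)$ we have $P_\Theta(f+g)\ge h_\nu(\Theta)+\int (f+g)\,d\nu$, because $g$ is bounded and $\int f\,d\nu>-\infty$. Rearranging gives $h_\nu(\Theta)+\int f\,d\nu\le P_\Theta(f+g)-\int g\,d\nu$, and taking the infimum over $g$ yields the claim.

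For ``$\ge$'' I will reduce to the dual variational principle (\ref{ref:dualvarprin}) on the base. Set $s=h_\nu(\Theta)+\int f\,d\nu$, which is finite since the flow has finite entropy and $f$ is bounded, and let $\mu=(AK)^{-1}(\nu)\in\M_\tau(\sigma)$. Choose a constant $t$ so large that $t-\|f\|_\infty>t_0$, where $P(-t_0\tau)<\infty$ (such $t_0$ exists since $\tau\in\Psi$). Put $\phi=\Delta_f-t\tau$.

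\emph{Checking the hypotheses of (\ref{ref:dualvarprin}).}
\begin{itemize}
\item Regularity: $\phi\in\mathrm{UC}_d(\Sigma)$, since $\Delta_f\in \mathrm{UC}_d(\Sigma)$ and $\tau$ has summable variations.
\item Upper bound: $|\Delta_f|\le \|f\|_\infty\tau$, so $\sup\phi\le-(t-\|f\|_\infty)\inf\tau<\infty$.
\item Finite pressure: for $u>0$ we have $u\phi\le -u(t-\|f\|_\infty)\tau$. Since $t-\|f\|_\infty>t_0$, for $u$ slightly below $1$ we still have $u(t-\|f\|_\infty)\ge t_0$, so $P(u\phi)\le P(-t_0\tau)<\infty$. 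Hence $P(\phi)<\infty$ and $s_\infty(\phi)<1$.
\item Integrability: $\int\phi\,d\mu>-\infty$ because $\int\tau\,d\mu<\infty$.
\end{itemize}
By Abramov's and Kac's formulas,
\[
h_\mu(\sigma)+\int\phi\,d\mu=\Big(h_\nu(\Theta)+\int f\,d\nu-t\Big)\int\tau\,d\mu=(s-t)\int\tau\,d\mu .
\]

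\emph{Lifting a near-optimal base function to the flow.} Fix $\epsilon>0$. By (\ref{ref:dualvarprin}) there is $g_0\in\mathrm{UC}_{b,\rho}(\Sigma)$ with
\[
P(\phi+g_0)-\int g_0\,d\mu\le (s-t)\int\tau\,d\mu+\epsilon .
\]
Let $g_1=g_0-P(\phi+g_0)$. Then $g_1\in\mathrm{UC}_{b,\rho}(\Sigma)$, $P(\Delta_f+g_1-t\tau)=0$, and $\int g_1\,d\mu\ge (t-s)\int\tau\,d\mu-\epsilon$. Using Remark \ref{rem:ex}, realise $g_1=\Delta_g$ for a continuous $g:Y\to\R$. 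This $g$ is bounded, because it has the form $\frac{g_1(x)}{\tau(x)}\psi'(\cdot)$ with $\inf\tau>0$. Therefore $g\in C^\cR_0(Y)$. Since $\Delta_{f+g}=\Delta_f+g_1$, Theorem \ref{thm: flow pres} gives $P_\Theta(f+g)=\inf\{t':P(\Delta_f+g_1-t'\tau)\le 0\}\le t$. Kac's formula gives $\int g\,d\nu=\int g_1\,d\mu/\int\tau\,d\mu\ge t-s-\epsilon/\int\tau\,d\mu$. Combining the two,
\[
P_\Theta(f+g)-\int g\,d\nu\le s+\frac{\epsilon}{\int\tau\,d\mu}.
\]
Letting $\epsilon\to0$ proves ``$\ge$''.

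\emph{Expected obstacle.} The main point to check is applying Theorem \ref{thm: flow pres} to $f+g$: it requires $\Delta_{f+g}$ to have summable variations, but $g_1$ is only uniformly $\rho$-continuous. I would handle this as in Lemma \ref{lem:presC0}. First, $P(\cdot)$ and $P_\Theta(\cdot)$ are $C^0$-continuous in $\Delta_g$ (the latter with constant $1/\inf\tau$), and the characterisation via $P(\Delta_f+\Delta_g-t'\tau)$ passes to the limit. Alternatively, replace $g_0$ at the start by a $\rho$-Lipschitz approximation, as in Lemma \ref{lem:appr} and Lemma \ref{lem:lip}; this changes all quantities by at most $\epsilon$. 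The hypothesis $\tau\in\cR$ enters through $C^\cR_0(Y)$ being the right test class, since $\Delta_g$ need not vanish at $\bar\infty$; the estimates above do not otherwise use it.
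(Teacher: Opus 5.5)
Your proposal is correct and follows essentially the same route as the paper: choose $t$ large so that $\Delta_f-t\tau$ satisfies the hypotheses of the base dual variational principle, take a near-optimal $g_0\in\mathrm{UC}_{b,\rho}(\Sigma)$, normalise so that $P(\Delta_f-t\tau+g_1)=0$, lift $g_1$ to the flow via Remark~\ref{rem:ex}, and conclude with Abramov's and Kac's formulas. On the obstacle you flag, the paper glosses over it and asserts $P_\Theta(f+g)=t$; you only need $P_\Theta(f+g)\le t$, which follows without invoking Theorem~\ref{thm: flow pres} from the base variational inequality $h_{\mu'}(\sigma)+\int(\Delta_f+g_1-t\tau)\,d\mu'\le P(\Delta_f+g_1-t\tau)=0$ for every $\mu'\in\M_\tau(\sigma)$, combined with Abramov's and Kac's formulas.
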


\begin{proof}The inequality 
$$h_\nu(\Theta)+\int f \, d\nu \leq \inf \left\{P_{\Theta}(f+g) - \int g \, d \nu : g \in C^\cR_0(Y)			\right\},$$
follows directly from the definition. Fix $\epsilon>0$. To prove the reverse inequality, it suffices to construct a function $g \in C^\cR_0(Y)$ such that
\begin{equation} \label{eq:f}
h_\nu(\Theta) + \int (f+g) \, d \nu -  P_{\Theta}(f+g)\ge - \epsilon.
\end{equation}
Let  $\mu \in \M_\tau(\sigma)$ be such that $\nu=(\mu \times \L)/(\int \tau \, d \mu)$. Since $f$ is bounded above, there exists $t^*>0$ such that $P(\Delta_f-t^*\tau) <\infty$ and $s_\infty(\Delta_f-t^*\tau)<1$. It follows from the dual variational principle for countable Markov shifts (see  \cite[Theorem 1.4]{iv2}), applied to the potential $\Delta_f-t^*\tau$, that there exists $h \in \text{UC}_{b,\rho}(\Sigma)$ such that 
\begin{equation*} \label{eq:estrella} 
h_\mu(\sigma) + \int \left(\Delta_f-t^* \tau + h \right) \, d\mu  -P(\Delta_f-t^* \tau + h) \ge  - \epsilon \int \tau \, d\mu.
\end{equation*}
By Remark \ref{rem:ex}, there exists $g \in C^\cR_0(Y)$ such that $\Delta_{g}=  h - P(\Delta_f-t^* \tau + h)$.  Therefore,
$$h_\mu(\sigma) +\int(\Delta_f+\Delta_g-t^*\tau) d\mu \ge  -\epsilon \int \tau d\mu.$$
By Abramov's and Kac's formulae, we obtain that
$$h_\nu(\Theta) +\int (f+g) d\nu -t^* \ge  -\epsilon.$$
Note that $P_{\Theta}(f+g)=t^*$. Indeed, $P(\Delta_f+\Delta_{g}-t^*\tau)=0,$ and for every $s <0$ we have that
$P(\Delta_f+\Delta_{g} -(t^*+s) \tau) \geq  P(\Delta_f+\Delta_{g}-t^*\tau) + |s| \inf \tau >0$. We conclude that (\ref{eq:f}) holds.
\end{proof}

We conclude this section with a result that allows to compute the entropy at infinity for suspension semi-flows with base \emph{BIP}. Recall that $s_\infty(\phi)=\inf\{t:P(t\phi)<\infty\}$.

\begin{lemma}\label{lem:entropyinf}
Let $(\Sigma, \sigma)$ be a topologically mixing countable Markov shift with the \emph{BIP} property. Let $\tau:\Sigma\to(0,\infty)$ be a roof function with summable variations and bounded away from zero. Let $(Y,\Theta)$ be the associated suspension flow. Assume that $(Y,\Theta)$ has finite entropy. Then, $$h_\infty(\Theta)=s_\infty(-\tau).$$
\end{lemma}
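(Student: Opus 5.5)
Under the BIP hypothesis, I would prove $h_\infty(\Theta)=s_\infty(-\tau)$ by establishing the two inequalities separately.

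\emph{Lower bound $h_\infty(\Theta)\ge s_\infty(-\tau)$.} Fix $t>s_\infty(-\tau)$, so $P(-t\tau)<\infty$. If $s_\infty(-\tau)\le 0$ the bound is trivial, so assume $s:=s_\infty(-\tau)>0$ and take $t'\in(0,s)$, where $P(-t'\tau)=\infty$.

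\begin{enumerate}
\item Choose finite BIP-compatible sub-alphabets $\{1,\dots,N\}\cup B$, which give compact invariant subshifts $\Sigma_N$. By the approximation property of the Gurevich pressure, $P(-t'\tau|\Sigma_N)\to\infty$.
\item To get measures escaping to infinity, work on the subsystems $\Sigma_{N,M}$ whose alphabet is $B$ together with the symbols in $[M,N]$, taking $M$ large. The key claim is that $P(-t'\tau|\Sigma_{N,M})$ can be made $\ge 0$ for every $M$, by choosing $N$ large.
\item To prove this claim, use the BIP property together with $P(-t'\tau)=\infty$. One has $\sum_{a\ge M} e^{\sup_{[a]}(-t'\tau)}=\infty$ for every $M$; otherwise finiteness of this tail sum plus BIP would give $P(-t'\tau)<\infty$, by the Mauldin--Urba\'nski/Sarig characterization of finite pressure for BIP shifts.
\item Take equilibrium measures $\mu_{M}$ of $-t'\tau$ on $\Sigma_{N,M}$. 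Then $h_{\mu_M}(\sigma)\ge t'\int\tau\,d\mu_M$, and Abramov's formula gives $h_{AK(\mu_M)}(\Theta)\ge t'$.
\item Check escape of mass using Lemma \ref{cyl_conver}. The mass of any fixed cylinder that avoids the finite set $B$ tends to zero. For the $B$-part, use that $\int\tau\,d\mu_M\to\infty$, since $\tau$ is large on symbols $\ge M$ (this needs $\tau\in\cR$, which follows from Lemma \ref{lem:bip} after using $V_1$ finite, or can be handled directly). Hence $AK(\mu_M)\to 0$ on cylinders.
\item Letting $t'\uparrow s$ gives the lower bound.
\end{enumerate}

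\emph{Upper bound $h_\infty(\Theta)\le s_\infty(-\tau)$.} Let $\nu_n=AK(\mu_n)\to 0$ on cylinders, and fix $t>s_\infty(-\tau)$. I would show $\limsup h_{\nu_n}(\Theta)\le t$. By the variational principle,
\[
h_{\mu_n}(\sigma)\le P(-t\tau)+t\int\tau\,d\mu_n,
\]
so $h_{\nu_n}(\Theta)\le t+P(-t\tau)/\int\tau\,d\mu_n$. If $\int\tau\,d\mu_n\to\infty$ we are done. Otherwise, by Lemma \ref{cyl_conver}, after passing to a subsequence $\mu_n\to0$ on cylinders while $\int\tau\,d\mu_n$ stays bounded. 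In that case I would apply the pressure at infinity of the shift: under BIP, $P_\infty(-t\tau)\le$ some quantity that is $-\infty$ or at most $0$ for $t>s_\infty$. More concretely, I expect to show
\[
\limsup\Bigl(h_{\mu_n}+\int(-t\tau)\,d\mu_n\Bigr)\le \lim_M \log\sum_{a\ge M}e^{\sup_{[a]}(-t\tau)}=-\infty,
\]
via a tail-pressure estimate for BIP shifts that bounds the pressure of measures concentrated on large symbols. Then, for $t$ slightly larger than the given one, $h_{\mu_n}-t\int\tau\,d\mu_n<0$ eventually, which yields $h_{\nu_n}\le t$.

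\textbf{Main obstacle.} I expect the main difficulty to be this BIP tail estimate in the upper bound: controlling the entropy of escaping measures with bounded $\int\tau$ by the tail sums $\sum_{a\ge M}e^{-t\inf_{[a]}\tau}$. I would try to obtain it by a counting argument on cylinder partitions, using the summable-variation bounds on $\tau$.
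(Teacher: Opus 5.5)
\textbf{The lower bound has a gap.} It is in step (5). The only property you require of $\Sigma_{N,M}$ is $P(-t'\tau|\Sigma_{N,M})\ge 0$. This gives no lower bound on $\mu_M\bigl(\bigcup_{a\ge M}[a]\bigr)$. Without such a bound, ``$\tau$ is large on symbols $\ge M$'' does not give $\int\tau\,d\mu_M\to\infty$, because the equilibrium state can live almost entirely on the $B$-symbols.

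Here is a concrete example. Take the full shift with the (legitimate) BIP set $B=\{1,2\}$, and $\tau$ locally constant with $\tau|_{[1]}=\tau|_{[2]}=1$ and $\tau|_{[a]}=\log a$ for $a\ge 3$.
\begin{itemize}
\item $P(-t\tau)<\infty$ iff $t>1$, so the flow has finite entropy, $\tau\in\cR$, and $s_\infty(-\tau)=1$.
\item For $t'=\tfrac12$ the $B$-part alone has pressure $\log 2-\tfrac12>0$, so your condition already holds with $N=M$.
\item The equilibrium state is then the Bernoulli measure with weights proportional to $(e^{-1/2},e^{-1/2},M^{-1/2})$.
\item As $M\to\infty$ it converges to the $(\tfrac12,\tfrac12)$-Bernoulli measure on $\{1,2\}^{\N}$, $\int\tau\,d\mu_M\to 1$, and $AK(\mu_M)$ converges to a probability measure, not to $0$.
\end{itemize}
So your construction does not produce an escaping sequence. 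There is also a secondary issue: the subshift on the alphabet $B\cup[M,N]$ need not be transitive, since getting from $b'$ back to $b$ may require other symbols.

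\textbf{A possible repair} is to force every excursion through a large symbol.
\begin{itemize}
\item Split the divergent series of step (3) according to the BIP pair $(b(a),b'(a))\in B\times B$, and keep a pair $(b,b')$ whose subseries diverges.
\item Fix an admissible word $w$ from $b'$ to $b$. Then $a\,w\,a'$ is admissible for all $a,a'$ in that class.
\item Let $X_{M,N}$ be the subshift generated by the blocks $a\,w$ with $a$ in the class and $M\le a\le N$.
\end{itemize}
For $M$ beyond the symbols of $w$, every invariant measure on $X_{M,N}$ gives mass exactly $1/(|w|+1)$ to $\bigcup_{a\ge M}[a]$. Hence $\int\tau\,d\mu\ge\frac{1}{|w|+1}\inf_{a\ge M}\inf_{[a]}\tau\to\infty$, and so $AK(\mu)(C\times[0,c])=c\,\mu(C)/\int\tau\,d\mu\to 0$. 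On the block $a\,w$ one has $S_{|w|+1}\tau\le\sup_{[a]}\tau+C_w$, and $\sup_{[a]}\tau\le\inf_{[a]}\tau+V_1(\tau)$; this is where $V_1(\tau)<\infty$ enters, as in Lemma~\ref{lem:bip}. So divergence of the subseries gives $P(-t'\tau|X_{M,N})\ge 0$ for $N$ large. Your steps (4) and (6) then go through.

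\textbf{The upper bound is correct, and easier than you expect.} Suppose $\tau\in\cR$ and $\int\tau\,d\mu_n$ stayed bounded along a subsequence. Then $AK(\mu_n)\to 0$ on cylinders would force $\mu_n([k])\to 0$ for every $k$. That gives $\liminf\int\tau\,d\mu_n\ge\inf_{k>K}\inf_{[k]}\tau$ for every $K$, a contradiction. So your ``otherwise'' case is empty, and the BIP tail estimate you flag as the main obstacle is unnecessary.

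This equivalence (escape on cylinders iff $\int\tau\,d\mu_n\to\infty$, \cite[Lemma 6.9]{iv}) is exactly the paper's reduction. The paper then simply cites \cite[Theorem 3.7]{irv}. Your two inequalities, once the lower-bound construction is repaired, amount to reproving that theorem by hand.
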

\begin{proof}
Note that there exists $t_0>0$ such that $P(-t_0\tau)<\infty$. It follows from Lemma \ref{lem:bip} that $\tau\in\cR$. By \cite[Lemma 6.9]{iv} 
we have that a sequence $(\nu_n)_n$ in $\M(\Theta)$ converges on cylinders to the zero measure if and only if $\lim_{n\to\infty}\int \tau d (AK)^{-1}(\nu_n)=\infty$. The result now follows directly from \cite[Theorem 3.7]{irv}. 
\end{proof}

\section{Proofs of Theorem \ref{thm_ldp2} and Theorem \ref{thm_equi2}}

Let $(\Sigma,\sigma)$ be a topologically mixing countable Markov shift and  $\tau\in\Psi$. Denote by  $(Y,\Theta)$  the associated suspension semi-flow. Let $f\in \mathrm{C}_b(Y)$ be such that $\Delta_f$ has summable variations. 

\begin{definition} 
The rate function $I_f:\M_{\leq 1}(\Theta) \to [0,\infty)$ is defined by
\begin{equation*}
I_f(\nu)= \sup \left\{ \int g\, d \nu -P_\Theta(f+g)  +P_\Theta(f) : g \in \mathrm{C}_0(Y)	\right\}
\end{equation*}
\end{definition}

By Lemma \ref{prop:test}, the map $\nu \mapsto \int g \, d\nu$ is continuous on $\M_{\le 1}(\Theta)$ for every $g \in C_0(Y)$. This implies that $I_f$ is lower semicontinuous, since it is the supremum of continuous functions. Moreover, taking $g = 0$ yields $I_f(\nu) \ge 0$ for every $\nu \in \M_{\le 1}(\Theta)$.

For suspension semi-flows with roof functions in the class $\Psi \cap \cR$, the rate function can be described more explicitly. Indeed, it directly follows from Theorem \ref{thm:dual} that, 

\begin{corollary}\label{cor:R} If $\tau\in \Psi\cap\cR$, and $\nu\in \M(\Theta)$,  then $I_f(\nu)=P_\Theta(f)-h_\nu(\Theta)-\int f \, d\nu$.
\end{corollary}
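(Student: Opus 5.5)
The identity will follow from Theorem~\ref{thm:dual} once we check two things: that its hypotheses hold, and that replacing the test class $\mathrm{C}^{\cR}_0(Y)$ by the smaller class $\mathrm{C}_0(Y)$ does not change the infimum. For the hypotheses: $\Delta_f$ has summable variations, so $V_n(\Delta_f)\to 0$, which gives $\Delta_f\in \mathrm{UC}_d(\Sigma)$, and $V_2(\Delta_f)<\infty$. Hence Theorem~\ref{thm:dual} gives
\[
P_\Theta(f)-h_\nu(\Theta)-\int f\,d\nu=\sup\Big\{\int g\,d\nu-P_\Theta(f+g)+P_\Theta(f): g\in \mathrm{C}^{\cR}_0(Y)\Big\}.
\]
Since $\mathrm{C}_0(Y)\subset \mathrm{C}^{\cR}_0(Y)$, this immediately yields $I_f(\nu)\le P_\Theta(f)-h_\nu(\Theta)-\int f\,d\nu$. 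The same inequality also follows directly from the variational inequality $P_\Theta(f+g)\ge h_\nu(\Theta)+\int (f+g)\,d\nu$.

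For the reverse inequality I fix $\epsilon>0$ and $g\in \mathrm{C}^{\cR}_0(Y)$, and set $c=\bar\Delta_g(\bar\infty)$. The case $\bar\infty\notin\bar\Sigma$ is trivial, since then the two test classes coincide. Otherwise I modify $g$ near infinity. For $k\in\N$ I choose a continuous $\eta_k:\bar\Sigma\to[0,1]$ with $\eta_k(\bar\infty)=1$, supported in a $\rho$-neighbourhood $W_k$ of $\bar\infty$ small enough that every $x\in W_k\cap\Sigma$ has first coordinate $x_1\ge k$. By Remark~\ref{rem:ex} there is a bounded continuous $u_k$ on $Y$ with $\Delta_{u_k}=c\,\eta_k|_\Sigma$; it is bounded because $\inf\tau>0$. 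I then set $g_k=g-u_k$. Its integral $\Delta_{g_k}=\Delta_g-c\eta_k$ lies in $\mathrm{UC}_{b,\rho}(\Sigma)$ and vanishes at $\bar\infty$, so $g_k\in \mathrm{C}_0(Y)$.

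Two estimates remain, and they are the heart of the argument.

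\emph{The integral term.} By Kac's formula, with $\mu=(AK)^{-1}(\nu)$,
\[
\Big|\int g_k\,d\nu-\int g\,d\nu\Big|\le \frac{|c|\,\mu(W_k\cap\Sigma)}{\int\tau\,d\mu}.
\]
The sets $W_k\cap\Sigma$ are contained in $\{x_1\ge k\}$, which decreases to the empty set, so this bound tends to $0$.

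\emph{The pressure term.} This is where $\tau\in\cR$ enters, and it is the main obstacle. On $W_k\cap\Sigma$ we have $x_1\ge k$, so $\tau\ge M_k:=\inf_{j\ge k}\inf_{[j]}\tau$, and $M_k\to\infty$. Consequently $|c\eta_k|\le (|c|/M_k)\,\tau$ on all of $\Sigma$. It follows that
\[
P\big(\Delta_f+\Delta_{g_k}-(t+|c|/M_k)\tau\big)\le P(\Delta_f+\Delta_g-t\tau)\quad\text{for all } t.
\]
By the characterization $P_\Theta(\cdot)=\inf\{t: P(\Delta_{\cdot}-t\tau)\le 0\}$ from Theorem~\ref{thm: flow pres}, extended to these potentials by the approximation argument of Lemma~\ref{lem:presC0}, this gives $P_\Theta(f+g_k)\le P_\Theta(f+g)+|c|/M_k$.

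Combining the two estimates, $\int g_k\,d\nu-P_\Theta(f+g_k)\ge \int g\,d\nu-P_\Theta(f+g)-\epsilon$ for $k$ large. Taking the supremum over $g\in\mathrm{C}^{\cR}_0(Y)$ and letting $\epsilon\to0$ shows that the supremum over $\mathrm{C}_0(Y)$ equals the supremum over $\mathrm{C}^{\cR}_0(Y)$. Together with Theorem~\ref{thm:dual}, this gives the claimed formula.
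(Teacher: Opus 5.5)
Your proof is correct, and it is more complete than the paper's. The paper simply asserts that the corollary ``directly follows'' from Theorem~\ref{thm:dual}. As you point out, $I_f$ is a supremum over $\mathrm{C}_0(Y)$, while Theorem~\ref{thm:dual} ranges over the larger class $\mathrm{C}^{\cR}_0(Y)$. The function built in its proof has $\Delta_g=h-P(\Delta_f-t^*\tau+h)$, which need not vanish at $\bar\infty$. So only $I_f(\nu)\le P_\Theta(f)-h_\nu(\Theta)-\int f\,d\nu$ is immediate, and the paper passes over this silently.

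Your truncation $g_k=g-u_k$, with $\Delta_{u_k}=c\,\eta_k$, supplies the missing reverse inequality. It also shows exactly where $\tau\in\cR$ is used. Without $\inf_{[j]}\tau\to\infty$, invariant measures can put most of their mass on $\{x_1\ge k\}$ while $\tau$ stays bounded there. Then the modification near $\bar\infty$ is no longer uniformly small in the flow averages.

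One simplification: your pointwise bound $|c\,\eta_k|\le (|c|/M_k)\tau$, combined with Kac's formula, gives $|\int u_k\,d\nu'|\le |c|/M_k$ for every $\nu'\in\M(\Theta)$. Hence $|P_\Theta(f+g_k)-P_\Theta(f+g)|\le |c|/M_k$ follows straight from the variational definition of $P_\Theta$, and the same bound also handles the integral term. This avoids extending the characterization $\inf\{t:P(\Delta-t\tau)\le 0\}$ of Theorem~\ref{thm: flow pres} to potentials $\Delta_f+\Delta_g$ without summable variations. That extension is true by a similar approximation, but your appeal to Lemma~\ref{lem:presC0} only sketches it, since that lemma concerns the periodic-orbit formula rather than this characterization.
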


\subsection{Proof of Theorem \ref{thm_ldp2}} 

We adapt the strategy used in the proof of Theorem  \ref{thm_ldp} with appropriate modifications. Set $\beta=\inf\{I_f(\nu):\nu\in \mathcal{K}\}$ and fix $\epsilon >0$. Note that
\begin{equation*}
\K \subset \left\{	 \nu \in \M_{\le 1}(\Theta): I_f(\nu) > \beta - \epsilon	 \right\}.
\end{equation*}
For each $g\in \mathrm{C}_0(Y)$, define 
$$\mathcal{V}_g= \left\{	 \nu \in \M_{\le 1}(\Theta): \int g \, d \nu - P_\Theta(f+g) + P_\Theta(f) > \beta - \epsilon	\right\}.$$  
Since for every  $g\in \mathrm{C}_0(Y)$ the map $\nu \mapsto  \int g \, d \nu$ is continuous in $\M_{\le 1}(\Theta)$,  the set $\mathcal{V}_g$ is open in the cylinder topology of $\M_{\le1}(\Theta)$. By the definition of $I_f(\nu)$,  we have that
$$\left\{	 \nu \in \M_{\le 1}(\Theta): I_f(\nu) > \beta - \epsilon	 \right\} =  \bigcup_{g \in \mathrm{C}_0(Y)}\mathcal{V}_g.$$
 Since $\K$ is compact and $\mathcal{K}\subset  \bigcup_{g \in \mathrm{C}_0(Y)}\mathcal{V}_g$, there exists a finite sub-cover $\{\mathcal{V}_{g_1}, \dots , \mathcal{V}_{g_N}\}$. 
For a continuous function $h:Y\to\R$ let,
$$Z_T(h)=\sum_{(x;s)\in \mathrm{FPer}_a(T-d,T)} \exp\left( \int_0^s h(\theta_t(x,0))  \, dt \right). $$
Set $r=\max_{1\le i\le N}\{P_\Theta(f)-P_\Theta(f+g_i)-\beta+\epsilon\}$ and let $g\in\{g_1,\ldots,g_N\}$. Define 
$$A_T(g)=\{(x;s)\in\mathrm{FPer}_a(T-d,T):\nu_x\in \mathcal{V}_g\}.$$ 
Note that 
 \begin{align*}
\sum_{(x;s)\in A_T(g) } e^{ \int_0^s f(\theta_t(x,0)) \, dt } \le & \sum_{(x;s)\in A_T(g) } e^{ \int_0^s (f+g)(\theta_t(x,0)) \, dt } e^{s(P_\Theta(f)-P_\Theta(f+g)-\beta+\epsilon)}\\
\le & e^{dr}Z_T(f+g)e^{-TP_\Theta(f+g)}e^{T(P_\Theta(f)-\beta+\epsilon)},
\end{align*}
where we used the definition of $\mathcal{V}_g$ and that $|s-T|\le d$. It follows that
 \begin{align*}
\frac{1}{Z_T(f)}\sum_{(x;s)\in A_T(g) } e^{ \int_0^s f(\theta_t(x,0)) \, dt }  \le e^{dr} Z_T(f+g)e^{-TP_\Theta(f+g)}Z_T(f)^{-1}e^{TP_\Theta(f)}e^{T(-\beta+\epsilon)},
\end{align*}
and thus
 \begin{align*}
\frac{1}{Z_T(f)}\sum_{(x;s)\in \bigcup_{i=1}^N A_T(g) } e^{ \int_0^s f(\theta_t(x,0)) dt }  \le&   Ne^{dr} Z_T(f+g)e^{-TP_\Theta(f+g)}Z_T(f)^{-1}e^{TP_\Theta(f)}e^{T(-\beta+\epsilon)}.
\end{align*}
Since $\{{(x,s)\in \mathrm{FPer}_a(T-d,T), \nu_{x}\in \mathcal{K}}\}\subset \bigcup_{i=1}^N A_T(g_i)$, by Lemma \ref{lem:presC0} we obtain that 
 \begin{align*}
\limsup_{T\to\infty}\frac{1}{T}\log\bigg(\frac{1}{Z_T(f)}\sum_{{(x;s)\in \mathrm{FPer}_a(T-d,T), \nu_{x}\in \mathcal{K}}} e^{ \int_0^s f(\theta_t(x,0)) dt }  \bigg)\le -\beta+\epsilon.
\end{align*}
Since $\epsilon>0$ was arbitrary,  the result follows. Finally, the last claim in the statement of Theorem \ref{thm_ldp2} follows directly from standard estimates on exponential growth, as in Remark \ref{rem:positivepressure}.

\subsection{Proof of Theorem \ref{thm_equi2}} 
For the remainder of this section, assume that $f \in \mathrm{C}_b(Y)$ is SPR. Let $\nu_f$ denote the unique equilibrium state of $f$ (see Remark \ref{rem:forder}).

\begin{lemma} \label{lem:beta} If $\nu\in \M_{\le1}(\Theta)$ satisfies that $I_f(\nu)=0$, then $\nu=\nu_f$. 
\end{lemma}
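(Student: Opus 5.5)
Mirroring Lemma~\ref{lem:=0}, I work with the test class $\mathrm{C}_0(Y)$ and use Lemma~\ref{lem:deri} in place of the derivative formula for the shift. Write $\nu=\lambda\nu_0$ with $\lambda\in[0,1]$ and $\nu_0\in\M(\Theta)$, as in Section~\ref{sec:test_flow}. The hypothesis $I_f(\nu)=0$ says that
\[
\int g\,d\nu\le P_\Theta(f+g)-P_\Theta(f)\qquad\text{for every } g\in \mathrm{C}_0(Y).
\]
Since $\mathrm{C}_0(Y)$ is a linear space, I replace $g$ by $tg$ for $t>0$ and divide by $t$. Letting $t\to0^+$ and applying Lemma~\ref{lem:deri} (valid because $\Delta_g\in\mathrm{UC}^0_{b,\rho}(\Sigma)\subset\mathrm{UC}_{b,\rho}(\Sigma)$) gives $\int g\,d\nu\le\int g\,d\nu_f$. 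Replacing $g$ by $-g$ then yields
\[
\int g\,d\nu=\int g\,d\nu_f\qquad\text{for all } g\in\mathrm{C}_0(Y).
\]

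Next I convert this into $\nu=\nu_f$ as elements of $\M_{\le1}(\Theta)$. Consider the constant sequence $\nu_n=\nu_f\in\M(\Theta)$. For every $g\in\mathrm{C}_0(Y)$ we have $\lim_n\int g\,d\nu_n=\int g\,d(\lambda\nu_0)$. By Lemma~\ref{prop:test}, the constant sequence $\nu_f$ therefore converges on cylinders to $\lambda\nu_0$. The cylinder topology is metrizable, hence Hausdorff, so $\lambda\nu_0=\nu_f$. In particular $\lambda=1$ and $\nu=\nu_f$.

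The case $\lambda=0$ needs a separate check, since it would give $\nu_f=0$ on cylinders. This is impossible because $\nu_f$ is a probability measure and $\nu_f(C\times[0,c])>0$ for some cylinder $C$. Lemma~\ref{prop:test} already covers $\lambda\in[0,1]$, so no additional case split should be needed.

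The main obstacle is justifying the use of Lemma~\ref{lem:deri}. It requires $f$ to be SPR with $\Delta_f$ of summable variations, and $g\in\mathrm{C}_b(Y)$ with $\Delta_g\in\mathrm{UC}_{b,\rho}(\Sigma)$. Both conditions are guaranteed here: the first by the standing assumptions of this section, and the second by the definition \eqref{eq:c0Y} of $\mathrm{C}_0(Y)$. I would also check that the one-sided difference quotient limit matches the two-sided derivative, which follows from convexity of $t\mapsto P_\Theta(f+tg)$.
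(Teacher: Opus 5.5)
Your proof is correct and follows the paper's argument: from $I_f(\nu)=0$ you get $\int g\,d\nu\le P_\Theta(f+g)-P_\Theta(f)$, you apply Lemma~\ref{lem:deri} to $tg$ and then to $-g$, and you identify $\nu$ through its integrals against $\mathrm{C}_0(Y)$. Where the paper simply writes ``we conclude that $\nu=\nu_f$'', you justify that last step explicitly, using Lemma~\ref{prop:test} on the constant sequence $\nu_f$ together with the Hausdorff property of the cylinder topology; the convexity check is unnecessary, since Lemma~\ref{lem:deri} already gives a two-sided derivative.
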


\begin{proof} By assumption, we have that $\int g  \, d \nu \le P_\Theta(f+g)-P_\Theta(f)$, for every $g\in \mathrm{C}_0(Y)$. By Lemma \ref{lem:deri}, it follows that $\int g \, d\nu\le \int g \, d\nu_f$. The same argument applies to $-g$, and therefore $\int g \, d\nu= \int g \, d\nu_f$, for every $g\in \mathrm{C}_0(Y)$. We conclude that $\nu=\nu_f$.
\end{proof}

\begin{remark}\label{rem:obs2} If $\mathcal{K}$ is a closed subset of $\M_{\le1}(\Theta)$ such that $\nu_f\notin \mathcal{K}$, then 
$$\inf \left\{I_f(\nu):\nu\in\mathcal{K} \right\}>0.$$
Indeed, assume for contradiction that $\inf\{I_f(\nu):\nu\in \mathcal{K}\}=0$. Since $I_f$ is lower semicontinuous and $\mathcal{K}$ is compact, there exists $\nu\in \mathcal{K}$ such that $I_f(\nu)=0$. By Lemma \ref{lem:beta}, this implies $\nu_f\in \mathcal{K}$, contradicting the assumption.
\end{remark}

\begin{proof}[Proof of Theorem \ref{thm_equi2}]

Let $g \in C_0(Y)$ and $\epsilon >0$, define
\begin{equation*}
U=U(g, \epsilon)= \left\{\nu \in \M_{\le1}(\Theta) :    \left| \int g \, d \nu - \int g \, d \nu_f   \right| < \epsilon \right\}.
\end{equation*}
By Lemma \ref{prop:test}, in order to prove that $(\nu_T)_T$ converges on cylinder to $\nu_f$, it suffices to show that for  large enough $T$, we have $\nu_T \in U(g, 2 \epsilon)$. Moreover, since for every $T >0$ the measures $\nu_T$ are probabilities and, also, $\nu_f$ is a probability, convergence on cylinders implies weak$^\ast$ convergence (see \cite[Lemma 6.7]{iv}).   Consider the complement of $U$,
\begin{equation*}
\K =\left\{\nu \in \M_{\le1}(\Theta) :    \left| \int g \, d \nu - \int g \, d \nu_f   \right| \geq  \epsilon \right\}.
\end{equation*}
This is a closed subset of $\M_{\le1}(\Theta)$. Since $\nu_f \notin \K$ we have, by Remark \ref{rem:obs2}, that
\begin{equation*}
\inf \left\{I_f(\nu):\nu\in\mathcal{K} \right\}= \beta>0.
\end{equation*}
Let $\beta_0 \in (0, \beta)$, by Theorem \ref{thm_ldp2}  there exists $T_1>0$ such that if $T>T_1$ then
\begin{align} \label{1}
\sum_{(x;s)\in \mathrm{FPer}_a(T-d,T), \nu_{x}\in \mathcal{K}}\exp\left(\int_0^{s}f(\theta_t(x,0)) \,dt\right)
\leq&\\ e^{-\beta_0 T}   \sum_{(x;s)\in \mathrm{FPer}_a(T-d,T)}\exp\left(\int_0^{s}f(\theta_t(x,0))dt\right).
\end{align}
It follows from the definition of pressure and the assumption on the sets $(A_T)_T$ that,
\begin{equation*}
\lim_{T \to \infty} \frac{1}{T} \log \left(	\frac{ \sum_{(x;s)\in A_T}\exp\left(\int_0^{s}f(\theta_t(x,0))dt\right)}{ \sum_{(x;s)\in \mathrm{FPer}_a(T-d,T)}\exp\left(\int_0^{s}f(\theta_t(x,0))dt\right)}	\right)=0.
\end{equation*}
There exists $T_2>0$ such that. if $T>T_2$ then
\begin{align} \label{2}
 \sum_{(x;s)\in \mathrm{FPer}_a(T-d,T)}\exp\left(\int_0^{s}f(\theta_t(x,0))dt\right) \leq e^{\beta_0  T/2} \sum_{(x;s)\in A_T}\exp\left(\int_0^{s}f(\theta_t(x,0))dt\right).
 \end{align}
Combining equations \eqref{1} and \eqref{2}, we obtain, 
\begin{align} \label{3} 
\sum_{(x;s)\in \mathrm{FPer}_a(T-d,T), \nu_{x}\in \mathcal{K}}\exp\left(\int_0^{s}f(\theta_t(x,0)) \,dt\right)
\leq &\\  e^{-\beta_0 T/2}    \sum_{(x;s)\in A_T}\exp\left(\int_0^{s}f(\theta_t(x,0))dt\right).
\end{align} 
By definition we have, 
\begin{equation*}
\mu_T=\frac{1}{\sum_{(x;s)\in A_T}\exp\big(\int_0^{s}f(\theta_t(x,0))dt\big)}\sum_{(x;s)\in A_T}\exp\bigg(\int_0^{s}f(\theta_t(x,0))dt\bigg)\nu_x,
\end{equation*}
thus,
\begin{equation*}
\int g \, d \nu_T = \frac{1}{\sum_{(x;s)\in A_T}\exp\big(\int_0^{s}f(\theta_t(x,0))dt\big)} \sum_{(x;s)\in A_T}\exp\bigg(\int_0^{s}f(\theta_t(x,0))dt\bigg)   \int g \, d \nu_x.
\end{equation*}
It follows from equation \eqref{3} that
\begin{equation} \label{4}
\frac{1}{\sum_{(x;s)\in A_T}\exp\big(\int_0^{s}f(\theta_t(x,0))dt\big)}  \sum_{(x;s)\in A_T, \nu_x \in \K}\exp\bigg(\int_0^{s}f(\theta_t(x,0))dt\bigg)   \int g \, d \nu_x \leq \|g\| e^{-\beta_0 T/2}.
\end{equation}
By the definition of $U$ we have
\begin{eqnarray} \label{5}
\left| 	 \sum_{(x;s)\in A_T, \nu_x \in U}\exp\bigg(\int_0^{s}f(\theta_t(x,0))dt\bigg)  \left( \int g \, d \nu_x  -  \int g \, d \nu_f  \right) 
	\right| \leq &\\  \epsilon \sum_{(x;s)\in A_T, \nu_x \in U}\exp\bigg(\int_0^{s}f(\theta_t(x,0))dt\bigg).
\end{eqnarray}
It follows from  equations \eqref{4} and \eqref{5} that,
\begin{equation*}
\left|	\int g \, d \nu_T - \int g \, d \nu_f	\right| \leq \epsilon +\| g \| e^{-\beta_0 T/2}.
\end{equation*}
Hence, the proof of (a) is complete.

We prove the second assertion of Theorem~\ref{thm_equi2}. Let $\mathcal P_T:=\text{FPer}_a(T-d,T)$
and, for every subset \(E\subset \mathcal P_T\), define its \(f\)-weighted mass by
\[
        W_T(E):=
        \sum_{(x,s)\in E}
        \exp\left(\int_0^s f(\theta_t(x,0))\,dt\right).
\]
Thus, \(W_T(\mathcal P_T)\) is the denominator appearing in the statement of
Theorem~\ref{thm_equi2}(b).

Let $\nu_f$ be the unique equilibrium state of $f$.  Since $M_{\le 1}(\Theta)$ is compact and metrizable for the cylinder topology, we may
choose a decreasing sequence 
$(U_m)_{m\ge1}$ of open neighbourhoods of $\nu_f$ such that
\[
        \bigcap_{m\ge1} U_m=\{\nu_f\}.
\]
Set $K_m:=M_{\le1}(\Theta)\setminus U_m$.
Then $K_m$ is closed and $\nu_f\notin K_m$.  Hence, by Remark~\ref{rem:obs2},
\[
        \beta_m:=\inf_{\nu\in K_m} I_f(\nu)>0 .
\]
By Theorem~\ref{thm_ldp2}, for each fixed $m\ge1$,
\[
 \limsup_{T\to\infty}
 \frac1T
 \log
 \frac{
 W_T\bigl(\{(x,s)\in\mathcal P_T:\nu_x\in K_m\}\bigr)
 }{
 W_T(\mathcal P_T)
 }
 \le -\beta_m .
\]
Consequently, for each $m\ge1$ there exists $R_m>0$ such that, for every
$T\ge R_m$,
\[
        W_T\bigl(\{(x,s)\in\mathcal P_T:\nu_x\in K_m\}\bigr)
        \le e^{-\beta_m T/2} W_T(\mathcal P_T).
\]
Increasing $R_m$, if necessary, we may assume that $R_m\nearrow\infty$ and for every $T\ge R_m$
\[
        e^{-\beta_m T/2}\le \frac1m
       \]
For $T>0$, define
\[
        m(T):=\max\{m\ge1:R_m\le T\},
\]
with $m(T)=1$ if the set on the right-hand side is empty.  Then
$m(T)\to\infty$ as $T\to\infty$.  Define
\[
        A'_T:=
        \{(x,s)\in \mathcal P_T:\nu_x\in U_{m(T)}\}.
\]
Since
\[
        \mathcal P_T\setminus A'_T
        =
        \{(x,s)\in\mathcal P_T:\nu_x\in K_{m(T)}\},
\]
we obtain, for all sufficiently large $T$,
\[
 \frac{W_T(\mathcal P_T\setminus A'_T)}{W_T(\mathcal P_T)}
 \le e^{-\beta_{m(T)}T/2}
 \le \frac1{m(T)}.
\]
Since $m(T)\to\infty$, it follows that
\[
        \frac{W_T(A'_T)}{W_T(\mathcal P_T)}\longrightarrow 1 .
\]

It remains to prove the convergence property.  Let $T_n\to\infty$ and let
$(x_n,s_n)\in A'_{T_n}$. Then
$\nu_{x_n}\in U_{m(T_n)}.$
Since $m(T_n)\to\infty$, for every fixed $m\ge1$ we have
$m(T_n)\ge m$ for all sufficiently large $n$.  Because the sequence
$(U_m)_m$ is decreasing, this implies that, for every fixed $m\ge1$, for all sufficiently large $n \in \N$ we have  $  \nu_{x_n}\in U_m$. 
Therefore, $\nu_{x_n}\to\nu_f$ in the cylinder topology. Since all the measures
$\nu_{x_n}$ and $\nu_f$ are probability measures, cylinder convergence is
equivalent to weak-star convergence. Hence
\[
        \nu_{x_n}\xrightarrow[n\to\infty]{w^*}\nu_f.
\]
This proves Theorem~\ref{thm_equi2}(b) for the window
\(\text{FPer}_a(T-d,T)\).

We also record the cumulative form needed below. Assume, in addition, that
$P_\Theta(f)\ge 0$.  Let
\[
        \mathcal P_{\le T}:=\text{FPer}_a(T)
\]
and define
\[
        W_{\le T}(E):=
        \sum_{(x,s)\in E}
        \exp\left(\int_0^s f(\theta_t(x,0))\,dt\right),
        \qquad E\subset \mathcal P_{\le T}.
\]
For each closed set $K\subset M_{\le1}(\Theta)$ with $\nu_f\notin K$, the
window estimate above implies
\[
 \frac{
 W_{\le T}\bigl(\{(x,s)\in \mathcal P_{\le T}:\nu_x\in K\}\bigr)
 }{
 W_{\le T}(\mathcal P_{\le T})
 }
 \longrightarrow 0 .
\]
Indeed, split $\mathcal P_{\le T}$ into the old part
$\mathcal P_{\le (1-\alpha)T}$ and the remaining part, where
$\alpha\in(0,1)$ is fixed.  Since $P_\Theta(f)>0$, the old part is
exponentially negligible with respect to $W_{\le T}(\mathcal P_{\le T})$.
The remaining part is covered by $O(T)$ windows of length $d$, and on each
window the large deviation estimate gives an exponentially small contribution
from $K$.  The polynomial factor $O(T)$ does not affect the exponential
decay.  Applying the same diagonal construction as above, with
$\mathcal P_{\le T}$ in place of $\mathcal P_T$, yields sets
\[
        A'_{\le T}\subset \text{FPer}_a(T)
\]
such that
\[
        \frac{W_{\le T}(A'_{\le T})}{W_{\le T}(FPer_a(T))}\longrightarrow 1
\]
and such that every sequence $(x_n,s_n)\in A'_{\le T_n}$, with $T_n\to\infty$,
satisfies
\[
        \nu_{x_n}\xrightarrow[n\to\infty]{w^*}\nu_f .
\]
This proves the cumulative version of Theorem~\ref{thm_equi2}(b).
\end{proof}

\section{Boundary points for interval maps} \label{sec:interval}

In this section we derive several applications of our large deviations and equidistribution results for suspension semi-flows to the study of boundary points of interval maps with infinitely many branches. A guiding example throughout is the Gauss map, for which the boundary points are precisely the rational numbers in $(0,1)$. These points correspond to finite continued fraction expansions and hence to finite orbits that terminate at the point where the map is not defined; in this case the point $0$.

A natural difficulty that arises in this setting is that the length of the continued fraction expansion and the size of the derivative along the orbit are, in general, unrelated and therefore induce different orderings of the boundary points. By passing to an associated suspension semi-flow, these quantities become compatible: the length of the flow orbit encodes both the length of the continued fraction expansion and the accumulated expansion of the map. Approximating boundary points by periodic orbits of the suspension, we are then able to apply our general results to obtain quantitative asymptotic statements on their distribution. This approach allows us to relate arithmetic data, such as the size of the denominator of a rational number, to dynamical quantities arising from the flow, thereby providing a framework for understanding the statistical properties of boundary points in interval maps with infinitely many branches.

We now describe the class of maps we will be interested in. Let $(d_n)_n$ be a strictly decreasing sequence of real numbers with $\lim_{n\to\infty} d_n=0$ and $d_1=1$. Set $I_n=[d_{n+1},d_n]$. Consider a map of the unit interval with countably many branches  $T:(0,1] \to [0,1]$  such that each branch $T|_{I_n}$ is monotone and admits a continuous extension to $\overline I_n$ whose image is $[0,1]$. Assume, moreover, that
\begin{enumerate}
\item[(1)] the map is $C^2$ on $\bigcup_{n\geq 1} \textrm{int}(I_n)$,
\item[(2)] there exists $\xi>1$ and $n_0\geq 1$ such that $|(T^{n_0})'(x)|\geq\xi$ for every $x\in \bigcup_{n\geq 1} \textrm{int}(I_n)$, 
\item[(3)] the map satisfies the \emph{Renyi condition}, that is, there exists a positive number $K>0$ such that
\[
\sup_{n\geq 1}\sup_{x,y,z\in I_n} \frac{|T''(x)|}{|T'(y)||T'(z)|}\leq K, and
\]
\item[(4)] there exist constants $c>0$ and $\alpha>1$, such that for every $n\geq 1$
\[
 |T'(x)|\geq c n^\alpha
\qquad
\text{for every }x\in \operatorname{int}(I_n).
\]
\end{enumerate}

An interval map with countable many branches satisfying conditions (1), (2), and (3) is called an expanding–Markov–Renyi (EMR) map. Such maps admit a coding by a full shift on a countable alphabet. If, in addition, condition (4) is satisfied, the map is called \emph{EMR with superlinear branch expansion.} 

Denote by $\B= \bigcup_{n \geq 1} T^{-n}(0)$ the collection of boundary points corresponding to the Markov partition. Since the system is Markov every point not in the boundary has an infinite code. More precisely, if $x\not\in\B$, then we write $x=(a_0,a_1,\dots)$ if and only if $T^n x\in I_{a_n}$ for every $n\geq 0$. On the other hand, points $b \in \B$ have finite codes. Indeed, there exists $\ell(b) \in \N$ such that $T^{\ell(b)}(b)=0$ and $T^{\ell(b)-1}(b)>0$. We say that $\ell(b)$ is the length of the code corresponding to $b$ and we write
\begin{equation*}
b=(a_0, a_1, \dots , a_{\ell(b)-1}),
\end{equation*}
with $a_i \in \N$.

\begin{definition}
For $b \in \B$, consider the measure equidistributed on its finite orbit
\begin{equation*}
\mu_b= \frac{1}{\ell(b)} \sum_{i=0}^{\ell(b)-1} \delta_{T^i b}.
\end{equation*}
\end{definition}
For each $k \in \N$, let
\begin{equation*}
\B_k= \left\{b \in \B :	|(T^{\ell(b)})'(b)| < k 	\right\},
\end{equation*}
where derivatives at boundary points are understood as one-sided derivatives along the corresponding finite branch.

\begin{lemma}\label{lem:Bk-finite}
For every $k\in\mathbb N$, the set
\[
\mathcal B_k=
\left\{
b\in\mathcal B:
\left|(T^{\ell(b)})'(b)\right|<k
\right\}
\]
is finite.
\end{lemma}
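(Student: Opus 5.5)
The plan is to bound separately the two pieces of data determining a boundary point, namely its code length $\ell(b)$ and the letters $a_0,\dots,a_{\ell(b)-1}$ of its code. Since $T$ is Markov with full branches, each finite admissible word $(a_0,\dots,a_{\ell-1})$ determines at most one boundary point: the point of the cylinder $I_{a_0}\cap T^{-1}I_{a_1}\cap\cdots\cap T^{-(\ell-1)}I_{a_{\ell-1}}$ mapped to $0$ by $T^{\ell}$, which is one endpoint of that cylinder. It therefore suffices to show that for $b\in\mathcal B_k$ both $\ell(b)$ and every $a_i$ are bounded by constants depending only on $k$. Only finitely many words satisfy such bounds.

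\textbf{Chain rule.} Write $|(T^{\ell(b)})'(b)|=\prod_{i=0}^{\ell(b)-1}|T'(T^ib)|$, with one-sided derivatives along the finite branch; these are limits of the interior bounds, so conditions (2) and (4) extend to them.

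\textbf{Bounding the letters.} Set $m:=\inf_x |T'(x)|$. Condition (4) gives $|T'|\ge c$ on every branch, so $m>0$. Condition (2) alone would not give this, since it only controls $n_0$-step products. Grouping the product into consecutive blocks of length $n_0$ and using (2) on each block gives
\[
|(T^{\ell})'(b)|\ge \xi^{\lfloor \ell/n_0\rfloor}\, m^{\,n_0}\min\{1,m\}^{-n_0}\cdot \min\{1,m\}^{n_0}\ge C_0\,\xi^{\ell/n_0},
\]
for a constant $C_0>0$ independent of $b$ that absorbs the at most $n_0-1$ leftover factors, each at least $\min\{1,m\}$. The same argument applies to every sub-product. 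Hence every sub-product of consecutive factors is at least $C_0$, and
\[
k>|(T^{\ell})'(b)|\ge C_0\,|T'(T^ib)|\ge C_0\,c\,a_i^{\alpha}.
\]
This bounds each $a_i$ by $(k/(C_0c))^{1/\alpha}$.

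\textbf{Bounding the length.} The same block estimate gives $k> C_0\xi^{\ell(b)/n_0}$, so $\ell(b)<n_0\log(k/C_0)/\log\xi+n_0$.

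\textbf{Main obstacle.} The delicate points are justifying the one-sided derivative bounds at boundary points, which follows from continuity of $T'$ on $\overline I_n$ by (1) together with the uniform estimates, and confirming that (2) applies to orbit segments of the finite branch. Both are routine limiting arguments.
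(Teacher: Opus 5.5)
Your proof is correct and follows the paper's argument: the chain rule and the $n_0$-block decomposition (with (4) controlling the at most $n_0-1$ leftover factors) bound $\ell(b)$, condition (4) bounds each letter, and only finitely many words remain. The one difference is that you bound the letters through a length-independent lower bound on sub-products, whereas the paper uses $c^{\ell(b)}\ge\min\{1,c^{M_k}\}$ with $M_k$ its length bound; also, the $i$-th factor has a sub-product on each side, so the harmless constant there should be $\min\{1,C_0\}^2$ rather than $C_0$, and your first display should read $\xi^{\lfloor \ell/n_0\rfloor}\min\{1,m\}^{n_0-1}$.
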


\begin{proof}
Throughout the proof, derivatives at boundary points are understood as
one-sided derivatives along the branch determined by the finite code. Let $b\in\mathcal B_k$ and write $m=\ell(b)$. Decompose $m=q n_0+r$ with  $0\le r<n_0$. By the uniform expansion assumption, for each full block of length $n_0$ we have
\[
\left|(T^{n_0})'(T^{j n_0}b)\right|\ge \xi,
\qquad 0\le j\le q-1.
\]
On the remaining $r$ iterates, the polynomial lower bound in the definition of
an EMR map with superlinear branch expansion gives
\[
\left|(T^r)'(T^{q n_0}b)\right|
\ge c^r
\ge c_*,
\]
where $c_*:=\min\{1,c^{n_0-1}\}>0. $ Therefore,
\[
\left|(T^m)'(b)\right|
=
\left|(T^r)'(T^{q n_0}b)\right|
\prod_{j=0}^{q-1}
\left|(T^{n_0})'(T^{j n_0}b)\right|
\ge c_*\,\xi^q.
\]
Since $b\in\mathcal B_k$, we have $\left|(T^m)'(b)\right|<k$. Hence, $c_*\,\xi^q<k$ and so $q$ is bounded above by a constant depending only on $k$. Consequently,
there exists $M_k\in\mathbb N$ such that
\[
\ell(b)\le M_k
\qquad
\text{for every } b\in\mathcal B_k.
\]
We now show that only finitely many symbols can appear in the code of a point
of $\mathcal B_k$. Let
\[
b=(a_0,\ldots,a_{m-1})
\]
be the finite code of $b$, with $m=\ell(b)\le M_k$. By the polynomial lower bound on the derivatives of the branches,
$|T'(T^i b)|\ge c a_i^\alpha$ where $0\le i\le m-1$. Thus
\[
\left|(T^m)'(b)\right|
=
\prod_{i=0}^{m-1}|T'(T^i b)|
\ge
c^m\prod_{i=0}^{m-1}a_i^\alpha.
\]
Set $c_k:=\min\{1,c^{M_k}\}>0.$
Since $m\le M_k$, we have $c^m\ge c_k$. In particular, for every
$0\le i\le m-1$, we have $k> \left|(T^m)'(b)\right| \ge
c_k a_i^\alpha. $ Hence
\[
a_i
<
\left(\frac{k}{c_k}\right)^{1/\alpha}.
\]
Therefore every symbol appearing in the finite code of a point of
$\mathcal B_k$ belongs to the finite set
\[
\left\{
1,\ldots,
\left\lfloor
\left(\frac{k}{c_k}\right)^{1/\alpha}
\right\rfloor
\right\}.
\]

We have shown that every point in $\mathcal B_k$ has a finite code whose
length is at most $M_k$ and whose symbols belong to a finite alphabet depending
only on $k$. Hence there are only finitely many possible finite codes. 
\end{proof}

\subsection{Derivative and length of finite orbits} 
A difficulty that must be addressed is that the length of the coding of a boundary point, denoted by $\ell(b)$, need not be related to the size of its derivative $|(T^{\ell(b)})'(b)|$. However, the expansivity condition (2) for the map $T$ implies that both quantities grow to infinity if the coding length tends to infinity. In Proposition~\ref{thm:weakdixonIM} we show that the length of the coding tends to infinity for a very large proportion of boundary points. 

In the setting of continued fractions, these questions have been extensively studied. In this context, boundary points for the Gauss map correspond to rational numbers $p/q$, the size of the derivative is given by $2\log q$, and the length of the coding coincides with the number of steps required by the Euclidean algorithm to compute the greatest common divisor of $p$ and $q$. For example, Heilbronn~\cite{he}, and later Dixon~\cite{di}, proved that for a large proportion of rational numbers, as $q \to \infty$, the coding length $\ell(p/q)$ is proportional to $\log q$.

\begin{definition} We say that an EMR interval map $T$ satisfies the $D-$condition, if for every $k \in \N$ there exists subsets $A_k \subset \B_k$  with
\begin{equation*}
\lim_{k \to \infty} \frac{\# A_k}{\# \B_k}=1,
\end{equation*}
with the property that for every sequence $(b_k)_k$ with  $b_k \in A_k$ we have $\lim_{k \to \infty} \ell(b_k)=\infty$.
\end{definition}

\begin{proposition}\label{thm:weakdixonIM} An EMR interval map with superlinear branch expansion satisfies the $D-$condition.
\end{proposition}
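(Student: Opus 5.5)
The plan is to show that, for each fixed $L\in\N$, the number of boundary points in $\B_k$ with $\ell(b)\le L$ is negligible compared with $\#\B_k$ as $k\to\infty$. A diagonal argument then produces the sets $A_k$. Concretely, set $N_L(k)=\#\{b\in\B_k:\ell(b)\le L\}$. If we prove $N_L(k)/\#\B_k\to 0$ for every fixed $L$, we can choose $L(k)\nearrow\infty$ slowly enough that $N_{L(k)}(k)/\#\B_k\to0$. We then put $A_k=\{b\in\B_k:\ell(b)>L(k)\}$. This gives $\#A_k/\#\B_k\to1$, and every sequence $b_k\in A_k$ satisfies $\ell(b_k)\ge L(k)\to\infty$. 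Finiteness of all sets involved is Lemma~\ref{lem:Bk-finite}.

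To bound $N_L(k)$ from above, use the superlinear expansion (4). For a code $(a_0,\dots,a_{m-1})$ with $m\le L$ we have $|(T^m)'(b)|\ge c^m\prod a_i^\alpha$. Membership in $\B_k$ therefore forces $\prod_i a_i\le (k c^{-m})^{1/\alpha}\le C_L k^{1/\alpha}$. The number of $m$-tuples of positive integers with product at most $X$ is $O(X(\log X)^{m-1})$, so
\[
N_L(k)\le C'_L\, k^{1/\alpha}(\log k)^{L-1}.
\]
Since every finite admissible code corresponds to a boundary point, no combinatorial restriction on words needs to be handled here.

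For the lower bound on $\#\B_k$, the goal is to show it grows at least like $k^{\gamma}$ for some $\gamma>1/\alpha$; in fact it should be close to $k$. First, by bounded distortion from the Renyi condition (3), $|(T^{m})'(b)|$ is comparable, up to a uniform constant $D$, to $|I_{a_0\dots a_{m-1}}|^{-1}$, the reciprocal length of the corresponding cylinder. For fixed $m$, the cylinders of length $m$ tile $(0,1]$. Take $m$ large. The cylinders of rank $m$ with $|I_w|\ge D/k$ have total length at least $1-\delta_m(k)$, because the cylinders of length $<D/k$ cover a set whose measure tends to $0$ as $k\to\infty$ for fixed $m$. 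Each such cylinder contributes one boundary point to $\B_k$, and there are at most $k/D$ of them. This only gives an upper count, so I would argue instead via the pressure function $t\mapsto P(-t\log|T'|)$. Its critical exponent equals $1$: the pressure is zero at $t=1$, the Lebesgue/Gibbs measure exists, and by (4) the pressure is finite for $t>1/\alpha$. Consequently $\sum_{b\in\B}|(T^{\ell(b)})'(b)|^{-t}$ diverges for $t<1$, which is equivalent to
\[
\limsup_k \frac{\log\#\B_k}{\log k}\ge 1.
\]
To upgrade this to a genuine lower bound for all large $k$, I would use the suspension flow with roof $\log|T'|$ and Theorem~\ref{thm: flow pres} / Remark~\ref{rem:positivepressure}. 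Boundary points of code $w$ are within bounded distortion of periodic points of code $w$, so $\#\B_k\ge c\,\#\mathrm{FPer}(\log k - C)$, which grows like $k^{1-o(1)}$ since the topological entropy of this flow equals $1$.

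The main obstacle is this lower bound: making the comparison between boundary points and periodic orbits of the suspension rigorous with uniform distortion constants, and ensuring that $\mathrm{FPer}_a$ is restricted to a fixed starting symbol without losing the exponential rate. Once $\#\B_k\ge k^{1-\epsilon}$ is established and compared with the $k^{1/\alpha}(\log k)^{L-1}$ upper bound, the fact that $\alpha>1$ yields $N_L(k)/\#\B_k\to0$, and the diagonal construction completes the proof.
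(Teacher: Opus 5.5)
Your proposal is correct and follows the paper's argument. The paper uses the same upper bound $C_Lk^{1/\alpha}(\log k)^{L-1}$, obtained from the superlinear expansion (4) and the divisor count of Lemma~\ref{lem:divisor-estimate}. It gets the same lower bound by comparing $\lambda(w)$ with Birkhoff sums at the periodic point $x_w$ and counting periodic orbits of an entropy-one suspension via Theorem~\ref{thm: flow pres}. It also makes the same diagonal choice of $A_k$.

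The one adjustment is the roof function. The function $\log|T'|$ need not be bounded away from zero: only $T^{n_0}$ is assumed expanding, and for the Gauss map $\inf\log|G'|=0$. The paper therefore uses the averaged roof $\tau=\frac1{n_0}\sum_{q=0}^{n_0-1}\widehat\tau\circ\sigma^q$, which has the same sums over periodic orbits.

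The obstacle you flagged is settled directly. Since $T^jb_w$ and $\pi(\sigma^jx_w)$ share their first $m-j$ symbols, one gets the uniform comparison $|\lambda(w)-S_m\tau(x_w)|\le\sum_{r\ge1}V_r(\widehat\tau)$. Restricting to a fixed symbol $a$ costs nothing, because Theorem~\ref{thm: flow pres} already gives exponential growth rate $1$ for $\mathrm{FPer}_a$, and any set of periodic words yields a lower bound.
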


The proof of this proposition relies on the fact that the proportion of boundary points whose coding length is bounded by a given constant is asymptotically negligible compared to the growth of their derivatives. To simplify notation, consider for every $b=(a_0, \dots , a_{\ell(b)-1})$ the \emph{logarithm of the multiplier} $\lambda(b)$ of $b$, defined by $\lambda(b)= \log |(T^{\ell(b)})'|(b)$.

\begin{proposition}\label{prop:largelengt}
For every $N\ge 1$, we have
\[
\lim_{n\to\infty}
\frac{
\#\{b\in \mathcal{B}:\lambda(b)\le n,\ \ell(b)\le N\}
}{
\#\{b\in \mathcal{B}:\lambda(b)\le n\}
}=0 .
\]
\end{proposition}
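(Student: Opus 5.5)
We will compare an upper bound for the numerator with a lower bound for the denominator. Write $D(n)=\#\{b\in\mathcal B:\lambda(b)\le n\}$ and $N_N(n)=\#\{b\in\mathcal B:\lambda(b)\le n,\ \ell(b)\le N\}$. The goal is to show that $N_N(n)$ grows at most polynomially in $e^{n}$ with exponent strictly smaller than the growth exponent of $D(n)$, or, failing that, at most polynomially in $n$ times $e^{n/\alpha}$ while $D(n)$ grows like $e^{s n}$ with $s\ge 1$. The natural object is the pressure function $t\mapsto P(-t\log|T'|)$ of the full-shift coding, together with the suspension semi-flow with roof $\tau=\log|T'|\circ\pi$. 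By the Rényi condition, $\tau$ has summable variations and bounded first variation. By (2) and (4), after passing to $T^{n_0}$ if needed, $\inf\tau>0$, and $P(-t\tau)<\infty$ for every $t>1/\alpha$. In particular $\tau\in\Psi$ after rescaling.

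\textbf{Numerator bound.} Fix $N$. A boundary point $b$ with code $(a_0,\dots,a_{m-1})$, $m\le N$, satisfies $\lambda(b)\ge \sum_i(\log c+\alpha\log a_i)$ by (4). So $\lambda(b)\le n$ forces $\prod_i a_i\le C_N e^{n/\alpha}$. Since codes are arbitrary finite words (full shift), the number of $m$-tuples of positive integers with product at most $X$ is $O(X(\log X)^{m-1})$. This gives
\[
N_N(n)\le C'_N\, n^{N}\, e^{n/\alpha}.
\]

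\textbf{Denominator bound.} Bounded distortion from the Rényi condition shows that $\lambda(b)$ differs by a uniform constant from $S_m\tau$ evaluated at the periodic point with code $(a_0,\dots,a_{m-1})$ repeated, or at any point of the corresponding cylinder. Therefore $D(n)$ is comparable, up to shifts $n\mapsto n\pm C$, to the number of admissible words $w$ with $\sup_{[w]}S_{|w|}\tau\le n$. The flow quantity $\#\mathrm{FPer}(T)$ counts exactly such words via their periodic orbits. By Theorem~\ref{thm: flow pres} together with the BIP remark (Remark~\ref{rmk:bipPer}), applied with $f=0$,
\[
\liminf_{n\to\infty}\frac1n\log D(n)\ge h_{top}(\Theta)=\inf\{t:P(-t\tau)\le0\}=:\delta.
\]
It remains to check $\delta>1/\alpha$. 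For an expanding Markov map with full branches, the Lebesgue measure of the union of the cylinders of length $m$ equals one, so $\sum_{|w|=m}|I_w|=1$. Bounded distortion gives $|I_w|\asymp e^{-S_m\tau}$, hence $P(-\tau)=0$. This forces $\delta=1$ (using strict decrease of $t\mapsto P(-t\tau)$, since $\inf\tau>0$). As $\alpha>1$, we get $1/\alpha<1=\delta$.

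\textbf{Conclusion and main obstacle.} Combining the two bounds gives $N_N(n)/D(n)\le C n^N e^{n(1/\alpha-1+\epsilon)}\to0$. The main obstacle is making the denominator lower bound rigorous. This requires passing carefully between counting boundary points by $\lambda(b)\le n$ and counting flow periodic orbits with period at most $n$, including the bounded-distortion shift and the fact that the periodic point and $b$ share the cylinder. It also requires justifying $P(-\tau)=0$ (equivalently $\sum_w|I_w|=1$) from the Markov full-branch structure. If the flow theorem is awkward to invoke directly, the fallback is an elementary argument: the identity $\sum_{|w|=m}e^{-\sup S_m\tau}\ge C^{-1}$ for every $m$, combined with the length bound $m\le n/\inf\tau$ coming from $\lambda(b)\le n$, already yields $D(n)\ge e^{(1-\epsilon)n}$ along suitable $n$. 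Monotonicity of $D$ then handles intermediate $n$.
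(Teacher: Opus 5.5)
Your proposal is correct and is essentially the paper's proof. The numerator is bounded by a polynomial in $n$ times $e^{n/\alpha}$ using condition (4) and a divisor-type count (as in Lemma~\ref{lem:divisor-estimate}), and the denominator is bounded below by comparing $\lambda(w)$ with $S_{|w|}\tau(x_w)$ up to an additive constant and counting flow periodic orbits via Theorem~\ref{thm: flow pres} for a suspension of entropy $1>1/\alpha$.

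The differences are minor:
\begin{enumerate}
\item You obtain $h_{top}(\Theta)=1$ directly from $\sum_{|w|=m}|I_w|=1$ and bounded distortion (so $P(-\tau)=0$), rather than from Bowen's formula as in Proposition~\ref{en_flo}.
\item You secure $\inf\tau>0$ by passing to $T^{n_0}$ rather than averaging $\widehat\tau$ over $n_0$ shifts. This works here because restricting to codes of length divisible by $n_0$ still gives a valid lower bound for the denominator.
\item Your elementary fallback, as sketched, only gives $D(n)\ge e^{(1-\epsilon)n}$ along a sequence of $n$ with possibly large gaps. Monotonicity alone does not upgrade this to all large $n$. The main route does not need it, since Theorem~\ref{thm: flow pres} gives a genuine limit.
\end{enumerate}
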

In order to prove this proposition we will require the following estimate.

\begin{lemma}\label{lem:divisor-estimate}
For every integer $j\ge 1$ and every $t\ge 1$, we have
\[
\#\left\{(a_0,\ldots,a_{j-1})\in\mathbb N^j:
\prod_{i=0}^{j-1}a_i\le t\right\}
\le t(1+\log t)^{j-1} .
\]
\end{lemma}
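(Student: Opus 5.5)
We argue by induction on $j$. Write $D_j(t)$ for the cardinality on the left-hand side. For $j=1$ the set is $\{a_0\in\mathbb N: a_0\le t\}$, so $D_1(t)=\lfloor t\rfloor\le t$, which is the claimed bound since $(1+\log t)^0=1$.

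For the inductive step, assume $D_{j}(s)\le s(1+\log s)^{j-1}$ for all $s\ge1$. Split according to the last coordinate:
\[
D_{j+1}(t)=\sum_{a=1}^{\lfloor t\rfloor} D_j(t/a),
\]
since $a_j=a$ forces $a\le t$ and the remaining $j$ coordinates must have product at most $t/a\ge 1$. Applying the inductive hypothesis and using $\log(t/a)\le\log t$ gives
\[
D_{j+1}(t)\le \sum_{a=1}^{\lfloor t\rfloor}\frac{t}{a}\bigl(1+\log (t/a)\bigr)^{j-1}\le t(1+\log t)^{j-1}\sum_{a=1}^{\lfloor t\rfloor}\frac1a .
\]
The harmonic sum satisfies $\sum_{a\le t}1/a\le 1+\int_1^t dx/x=1+\log t$. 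Therefore $D_{j+1}(t)\le t(1+\log t)^{j}$, which closes the induction.

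Neither step is a real obstacle. The only points that need care are that $t/a\ge1$ in every term, so the inductive hypothesis applies, and the elementary harmonic-sum estimate. The factor $(1+\log(t/a))^{j-1}$ is simply bounded by its value at $a=1$. No sharper bound is needed, since in Proposition \ref{prop:largelengt} one only needs a polylogarithmic loss relative to the linear growth in $t$.
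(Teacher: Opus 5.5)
Your proof is correct and follows the paper's argument essentially line for line. Both use induction on $j$ and the decomposition $D_{j+1}(t)=\sum_{a=1}^{\lfloor t\rfloor}D_j(t/a)$ (with $t/a\ge 1$), bound $(1+\log(t/a))^{j-1}$ by $(1+\log t)^{j-1}$, and close with the harmonic-sum estimate $\sum_{a\le t}1/a\le 1+\log t$.
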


\begin{proof}
Let $D_j(t)$ denote the cardinality on the left-hand side. For $j=1$,
$D_1(t)=\lfloor t\rfloor\le t$. Suppose now that $j\ge 2$ and that the
estimate has been proved for $j-1$. Since $t/a\ge 1$ whenever
$1\le a\le \lfloor t\rfloor$, the induction hypothesis gives
\[
\begin{aligned}
D_j(t)
&=\sum_{a=1}^{\lfloor t\rfloor}D_{j-1}(t/a) \\
&\le t\sum_{a=1}^{\lfloor t\rfloor}\frac{1}{a}
   \bigl(1+\log(t/a)\bigr)^{j-2} \\
&\le t(1+\log t)^{j-2}\sum_{a=1}^{\lfloor t\rfloor}\frac{1}{a} \\
&\le t(1+\log t)^{j-1}.
\end{aligned}
\]
Here we used the elementary bound
$\sum_{a=1}^{\lfloor t\rfloor}a^{-1}\le 1+\log t$. The induction is
complete.
\end{proof}

\begin{proof}[Proof of Proposition~\ref{prop:largelengt}]
Let $\widehat\tau:\Sigma\to\mathbb R$ be the symbolic potential $\widehat\tau(x)=\log |T'(\pi x)|,$
where $\pi:\Sigma\to[0,1]$ denotes the coding map. For a finite word
$w=(a_0,\ldots,a_{m-1})$, let $b_w\in\mathcal B$ be the boundary point
with code $w$, and let
$x_w=(a_0,\ldots,a_{m-1},a_0,\ldots,a_{m-1},\ldots)$
be the corresponding periodic point in $\Sigma$. We write $|w|=m$ and $\lambda(w):=\lambda(b_w)=\log |(T^m)'(b_w)|.$

We first estimate the numerator. Fix $1\le j\le N$ and assume that
$|w|=j$ and $\lambda(w)\le n$. By the polynomial lower bound in the definition of an EMR map with superlinear branch expansion, for every $ 0\le i\le j-1$ we have $|T'(T^i b_w)|\ge c a_i^\alpha$.
Therefore,
\[
n\ge \lambda(w)
=\sum_{i=0}^{j-1}\log |T'(T^i b_w)|
\ge j\log c+\alpha\sum_{i=0}^{j-1}\log a_i .
\]
It follows that
\[
\prod_{i=0}^{j-1}a_i
\le c^{-j/\alpha}e^{n/\alpha}
\le C_Ne^{n/\alpha},
\]
where $C_N:=\max\{1,c^{-N/\alpha}\}$. Hence
\[
\#\{b\in\mathcal B:\lambda(b)\le n,\ \ell(b)=j\}
\le
\#\left\{(a_0,\ldots,a_{j-1})\in\mathbb N^j:
\prod_{i=0}^{j-1}a_i\le C_Ne^{n/\alpha}\right\}.
\]
By Lemma~\ref{lem:divisor-estimate}, applied with $t=C_Ne^{n/\alpha}$, we obtain
\[
\begin{aligned}
\#\{b\in\mathcal B:\lambda(b)\le n,\ \ell(b)=j\}
&\le  C_N e^{n/\alpha}
   \left(1+\log C_N+\frac{n}{\alpha}\right)^{j-1} \\
&\le  C_N e^{n/\alpha}
   \left(|1+\log C_N|+\frac{n}{\alpha}\right)^{j-1} \\
&\le  C_N e^{n/\alpha}
   \left(|1+\log C_N|+\frac{1}{\alpha}\right)^{j-1}(1+n)^{j-1}
\\
&\le  C'_{j,N} e^{n/\alpha}(1+n)^{j-1}.
\end{aligned}
\]
where $C'_{j,N}= C_N \left(|1+\log C_N|+\frac{1}{\alpha}\right)^{j-1}.$ After summing over $1\le j\le N$, we obtain
\begin{equation}\label{eq:fixed-length-upper}
\#\{b\in\mathcal B:\lambda(b)\le n,\ \ell(b)\le N\}
\le Q_N(n)e^{n/\alpha},
\end{equation}
where $Q_N(n)=\sum_{j=1}^N C'_{j,N}(1+n)^{j-1}$ is a polynomial depending only on $N$.

We now estimate the denominator from below. Set
\begin{equation} \label{sym-tau}
\tau:=\frac1{n_0}\sum_{q=0}^{n_0-1}\widehat\tau\circ\sigma^q .
\end{equation}
Then $\tau$ is cohomologous to $\widehat\tau$, has summable variations (\cite[Proposition~3.1.3]{j}), and is
bounded away from zero; indeed, the uniform expansion assumption gives
$\tau\ge n_0^{-1}\log\xi>0$. Moreover, if $x$ is periodic with period $m$,
then $S_m\tau(x)=S_m\widehat\tau(x).$ Indeed,
\[
\begin{aligned}
S_m\tau(x_w)
&=
\frac1{n_0}\sum_{q=0}^{n_0-1}
\sum_{i=0}^{m-1}
\widehat\tau(\sigma^{i+q}x_w)  
&=
\frac1{n_0}\sum_{q=0}^{n_0-1}
S_m\widehat\tau(x_w)
=
S_m\widehat\tau(x_w),
\end{aligned}
\]
because shifting the starting point of the sum along an $m$-periodic orbit
does not change the sum over one full period. Since $\widehat\tau$ has summable variations, the constant
$C_{\mathrm{var}}:=\sum_{q=1}^{\infty}V_q(\widehat\tau)$
is finite. For a word $w$ with $|w|=m$, the points $T^i b_w$ and
$\pi(\sigma^i x_w)$ have the same first $m-i$ symbols. Hence
\[
\begin{aligned}
\left|\lambda(w)-S_m\tau(x_w)\right|
=
\left|\lambda(w)-S_m\widehat\tau(x_w)\right| 
\le \sum_{i=0}^{m-1} V_{m-i}(\widehat\tau) 
\le C_{\mathrm{var}} .
\end{aligned}
\]
Consequently, if $S_{|w|}\tau(x_w)\le n-C_{\mathrm{var}}$, then
$\lambda(w)\le n$. Let
\[
\mathcal N(R):=
\#\{w:S_{|w|}\tau(x_w)\le R\}.
\]
The preceding implication and the correspondence between finite words and boundary points give
\begin{equation}\label{eq:denominator-lower-by-N}
\#\{b\in\mathcal B:\lambda(b)\le n\}
\ge \mathcal N(n-C_{\mathrm{var}})
\end{equation}
for all sufficiently large $n$. We will use now the fact that the suspension semi-flow with roof function $\tau$ has topological entropy equal to $1$ (see (1) in Proposition \ref{en_flo}). Hence, for any fixed symbol $a$ and any $d>0$, Theorem \ref{thm: flow pres} ensures that
\[
\lim_{R\to\infty}
\frac1R\log \#\mathrm{FPer}_a(R-d,R)=1.
\]
Every element of $\mathrm{FPer}_a(R-d,R)$ determines a word counted by
$\mathcal N(R)$. Therefore
\[
\liminf_{R\to\infty}\frac1R\log\mathcal N(R)\ge 1.
\]
Thus, for every $\varepsilon>0$ and all sufficiently large $R$, we have $\mathcal N(R)\ge e^{(1-\varepsilon)R}.$
Taking $R=n-C_{\mathrm{var}}$ in \eqref{eq:denominator-lower-by-N}, we get
\begin{equation}\label{eq:denominator-lower}
\#\{b\in\mathcal B:\lambda(b)\le n\}
\ge e^{(1-\varepsilon)(n-C_{\mathrm{var}})}
\end{equation}
for all sufficiently large $n$. Choose $\varepsilon>0$ such that $1-\varepsilon>1/\alpha$, which is possible
because $\alpha>1$. Combining \eqref{eq:fixed-length-upper} and
\eqref{eq:denominator-lower}, we obtain
\[
\frac{
\#\{b\in\mathcal B:\lambda(b)\le n,\ \ell(b)\le N\}
}{
\#\{b\in\mathcal B:\lambda(b)\le n\}
}
\le
C_\varepsilon Q_N(n)
\exp\left(n\left(\frac1{\alpha}-(1-\varepsilon)\right)\right).
\]
The exponent is negative, so the right-hand side tends to zero as
$n\to\infty$. This proves the proposition.
\end{proof}

\begin{proof}[Proof of Proposition \ref{thm:weakdixonIM}] Let $k\in\N$. For any $N\geq 1$ consider the set $\B_{k,N}\subset \B_k$ defined as
\[
\B_{k,N}=\{b\in \B_k : \ell(b)\leq N\}. 
\]
By Proposition \ref{prop:largelengt}, we know that $\# \B_{k,N}/\# \B_k \to 0$ as $k\to\infty$. In particular, there exists $k_N\geq 1$ such that for every $k\geq k_N$, we have
\[
\frac{\#\{b\in \B_k : \ell(b)\geq N\}}{\# \B_k} \geq 1-\frac{1}{N}.
\]
For every $k\in [k_N,k_{N+1})$, set $A_k=\{b\in \B_k : \ell(b)\geq N\}$. Then, by construction $\# A_k/\# \B_k \to 1$ as $k\to\infty$. Moreover, by construction $\ell(b_k)\to\infty$ for every $b_k\in A_k$ as $k\to\infty$.
\end{proof}

\subsection{Proof of Theorem \ref{main:interval}}

We will start by showing that, from a dynamical point of view, a sequence of boundary points behaves in a similar way to a sequence of periodic orbits. Let $(b_k)_k$ be a sequence of boundary points with $b_k\in \B_k$. Denote by $\omega_k=(a_0,\ldots,a_{\ell(b_k)-1})$ the word representing $b_k$. Set $\mu_k:=\mu_{b_k}$ and define $\hat{\mu}_k$ as the atomic measure uniformly distributed on the periodic orbits $x_{\omega_k}:=x_k=[\overline{a_0,\ldots,a_{\ell(b_k)-1}}]$, that is
\[
\hat{\mu}_k := \frac{1}{\ell(b_k)}\left( \delta_{[\overline{a_0,\ldots,a_{\ell(b_k)-1}}]}+\delta_{[\overline{a_1,\ldots,a_{\ell(b_k)-1},a_0}]}+\ldots+ \delta_{[\overline{a_{\ell(b_k)-1},\ldots,a_{\ell(b_k)-2}}]}\right).
\]

\begin{proposition}\label{prop:per_and_div} If $\lim_{k\to\infty} \ell(b_k) =\infty$, then $\lim_{k\to\infty} d(\mu_k,\hat{\mu}_k)=0$.
\end{proposition}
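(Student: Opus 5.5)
Here $d$ is a metric inducing the weak$^\ast$ topology on probability measures on $[0,1]$. Both $\mu_k$ and $\hat\mu_k$ live on $[0,1]$, the latter after pushing forward by the coding map $\pi$. The plan is to pair the atoms of the two measures index by index and show that most pairs are close.

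We work with a bounded-Lipschitz type metric, for instance
\[
d(\mu,\nu)=\sup\Big\{\Big|\int h\,d\mu-\int h\,d\nu\Big| : \|h\|_\infty\le 1,\ \mathrm{Lip}(h)\le 1\Big\},
\]
which metrizes weak$^\ast$ convergence on the compact space $[0,1]$. If the paper's $d$ is a different weak$^\ast$ metric, the same argument applies: it suffices to show $\int h\,d\mu_k-\int h\,d\hat\mu_k\to0$ for each Lipschitz $h$, since these functions are dense in $\mathrm C([0,1])$. For such $h$ and $m=\ell(b_k)$ we have
\[
\Big|\int h\,d\mu_k-\int h\,d\hat\mu_k\Big|\le \frac1m\sum_{i=0}^{m-1}\min\{2,\ |T^ib_k-\pi(\sigma^i x_k)|\}.
\]
Next we compare the two points at each index. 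The point $T^ib_k$ has finite code $(a_i,\dots,a_{m-1})$. The periodic point $\pi(\sigma^i x_k)$ has code beginning with $(a_i,\dots,a_{m-1})$. Hence both lie in the closure of the cylinder interval $I(a_i,\dots,a_{m-1})$ of generation $m-i$, where $T^{m-i}$ maps onto $[0,1]$; at boundary points we use the one-sided branch, as in the paper. By condition (2), together with the bound $|T'|\ge c$ from (4) applied to the leftover iterates (exactly as in the proof of Lemma~\ref{lem:Bk-finite}), the derivative of $T^{j}$ on any generation-$j$ cylinder is at least $c_*\xi^{\lfloor j/n_0\rfloor}$. The mean value theorem then gives that the length of a generation-$j$ cylinder is at most $c_*^{-1}\xi^{-\lfloor j/n_0\rfloor}$. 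Bounded distortion from the Renyi condition (3) is not needed; the crude derivative lower bound suffices. Therefore
\[
|T^ib_k-\pi(\sigma^i x_k)|\le C\,\xi^{-(m-i)/n_0}.
\]

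Summing over $i$, with $j=m-i$, yields
\[
\frac1m\sum_{j=1}^{m}\min\{2,C\xi^{-j/n_0}\}\le \frac{C'}{m},
\]
because the geometric series is summable. Taking the supremum over admissible $h$ gives $d(\mu_k,\hat\mu_k)\le C'/\ell(b_k)\to0$, since $\ell(b_k)\to\infty$ by hypothesis.

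The main obstacle is the bookkeeping for the cylinder-length estimate, for two reasons. First, $b_k$ is an endpoint of its cylinder, so the derivative must be taken one-sidedly, or the argument should run through interior points, which are dense. Second, the expansion is only eventual, after $n_0$ steps, so the remainder iterates have to be absorbed into the constant $c_*$.
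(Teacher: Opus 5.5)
Your proposal is correct and follows essentially the same route as the paper: you pair $T^i b_k$ with the $i$-th point of the periodic orbit, bound their distance by the length of the common generation-$(\ell(b_k)-i)$ cylinder (which decays like $\xi^{-\lfloor(\ell(b_k)-i)/n_0\rfloor}$), and sum the geometric series to get an $O(1/\ell(b_k))$ bound for Lipschitz test functions. The only difference is that you derive the cylinder-length estimate yourself via the mean value theorem, picking up a harmless constant $c_*^{-1}$. The paper quotes that estimate from the literature with constant $1$, and you could recover constant $1$ too by using the expansion only on the first $\lfloor j/n_0\rfloor n_0$ iterates and bounding the length of the image cylinder by $1$.
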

\begin{proof}
First note that, since for every $i \in \{0, \dots, \ell(b_k)-1\}$ we have that both $T^i(b_k)$ and $T^i x_k$ belong to the cylinder
$I_{[a_i, \dots , a_{l_\ell(b_k)-1}]}$, we have that (see \cite[Page 172]{cfs})
\begin{equation} \label{separacion}
|T^{i}(b_k)) - T^i(x_k) | \leq \frac{1}{\xi^{[(\ell(b_k)-i)/n_0]}},
\end{equation}
where $[\cdot]$ denotes floor function and $\xi$ is the uniform expansion assumption $(2)$. 

In order to prove $\lim_{k \to \infty} d(\mu_k, \hat{\mu}_k)=0$ we just need to prove (see \cite[Theorem 13.16 (ii)]{kl}) that for every Lipschitz function $f:[0,1] \to \R$,
\begin{equation*}
\lim_{n \to \infty} \left| \int f \, d \mu_k - \int f \, d \hat{\mu}_k \right| =0.
\end{equation*} 
Let $f:[0,1] \to \R$ be a Lipschitz function with Lipschitz constant equal to $L$. We have, by inequality \eqref{separacion}, that

\begin{eqnarray*}
\left| \int f \, d \mu_k - \int f \, d \hat{\mu}_k \right| &\leq& \frac{1}{\ell(b_k)} \sum_{i=0}^{\ell(b_k)-1} |	f(	T^{i}(	b_k)) -	f(	  T^i (x_k))|\\ 
&\leq& \frac{L}{\ell(b_k)} \sum_{i=0}^{\ell(b_k)-1}|T^{i}(	b_k) -	  T^i (x_k)| \\ 
&\leq & \frac{L}{\ell(b_k)} \frac{n_0\xi}{\xi-1}, 
\end{eqnarray*}
where the last inequality follows from \eqref{separacion} and comparison with a geometric series. The result now follows since $\lim_{k \to \infty} \ell(b_k)= \infty$.
\end{proof}

In order to prove Theorem \ref{main:interval}, we will use Theorem \ref{thm_equi2} for the suspension semi-flow over the full-shift $(\Sigma,\sigma)$ induced by $T$ with a suitable roof function. As observed in Remark \ref{rmk:bipPer}, in the case of the full-shift we may consider $\textrm{FPer}(T-d,T)$ instead of $\textrm{FPer}_a(T-d,T)$ for the remainder of this section.

\begin{proposition} \label{en_flo}
Let \(\Theta=(\theta_t)_{t\geq 0}\) be the suspension semi-flow over
\((\Sigma,\sigma)\) with roof function \(\tau\). Then
\begin{enumerate}
\item[(1)] the entropy satisfies \(h_{top}(\Theta)=1\);
\item[(2)] the flow is SPR, that is, $h_{\infty}(\Theta)<h_{top}(\Theta).$
\end{enumerate}
\end{proposition}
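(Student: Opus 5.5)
The plan is to work with the symbolic potential $\widehat\tau=\log|T'\circ\pi|$ and the averaged roof function $\tau=\frac1{n_0}\sum_{q=0}^{n_0-1}\widehat\tau\circ\sigma^q$ from \eqref{sym-tau}. We already know that $\tau$ has summable variations and that $\inf\tau\ge n_0^{-1}\log\xi>0$. We also showed that $S_m\tau(x)=S_m\widehat\tau(x)$ at every $m$-periodic point $x$. Since the Gurevich pressure is computed from periodic points, this gives
\[
P(-t\tau)=P(-t\widehat\tau)\qquad\text{for every } t\in\R.
\]
So everything reduces to understanding the single function $t\mapsto P(-t\log|T'|)$ on the full shift. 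Both parts of the proposition will then follow from Theorem~\ref{thm: flow pres} and Lemma~\ref{lem:entropyinf}.

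For (1), we have $h_{top}(\Theta)=\inf\{t: P(-t\tau)\le 0\}$, so I will show the following three facts.
\begin{enumerate}
\item[(i)] $P(-\widehat\tau)=0$. The Renyi condition (3) gives a uniform bounded distortion constant $D$ for all inverse branches $T^{-n}|_{I_{[a_0,\ldots,a_{n-1}]}}$. Hence $\sup_{[w]}\exp(-S_n\widehat\tau)$ is comparable, up to the factor $D$, to the Lebesgue length of the cylinder interval $I_w$. The same comparison holds for $\exp(-S_n\widehat\tau)$ at the periodic point of $[w]$. The cylinder intervals of length $n$ with first symbol $a$ partition $I_a$ up to a countable set, so the partition function $Z_n(-\widehat\tau)$ restricted to $[a]$ is bounded above and below by constants times $|I_a|$. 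Consequently $P(-\widehat\tau)=0$.
\item[(ii)] $P(-t\widehat\tau)$ is strictly decreasing where it is finite. For $s>0$, the bound $S_n\tau\ge n\,n_0^{-1}\log\xi$ gives $P(-(t+s)\tau)\le P(-t\tau)-s\,n_0^{-1}\log\xi$.
\item[(iii)] $P(-t\widehat\tau)$ is finite for $t>1/\alpha$. On the full shift, bounded distortion gives
\[
P(-t\widehat\tau)\le \log\sum_{a}\sup_{[a]}e^{-t\widehat\tau}+C_t\le \log\sum_a c^{-t}a^{-\alpha t}+C_t,
\]
using the superlinear bound (4), and the right-hand side is finite for $t>1/\alpha$.
\end{enumerate}
In particular $P(-\tau)=0<\infty$, so $\tau\in\Psi$ and the flow has finite entropy. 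Facts (i) and (ii) together show that the zero of $t\mapsto P(-t\tau)$ is unique and equals $1$, so $h_{top}(\Theta)=1$.

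For (2), the full shift has the BIP property, so Lemma~\ref{lem:entropyinf} applies and gives $h_\infty(\Theta)=s_\infty(-\tau)=\inf\{t:P(-t\tau)<\infty\}$. By (iii), $s_\infty(-\tau)\le 1/\alpha<1=h_{top}(\Theta)$, since $\alpha>1$. This gives the SPR property.

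The main obstacle is (i): the distortion argument must be made uniform over countably many branches, so that the partition sums are comparable to $|I_a|$ with constants independent of $n$. The ingredient that makes this work is the Renyi condition, which yields the standard estimate $|\log|(T^n)'(x)|-\log|(T^n)'(y)||\le C$ for $x,y$ in the same $n$-cylinder. I would cite this from \cite{cfs} rather than reprove it.
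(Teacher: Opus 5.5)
Your proposal is correct. Part (2) is the paper's argument: the same reduction $P(-t\tau)=P(-t\widehat\tau)$ via periodic sums, the same use of condition (4) to get $P(-t\widehat\tau)\le -t\log c+\log\sum_k k^{-t\alpha}<\infty$ for $t>1/\alpha$, and then Lemma~\ref{lem:entropyinf}. Your constant $C_t$ is unnecessary, since at a periodic point $e^{-tS_n\widehat\tau(x)}\le\prod_i\sup_{[a_i]}e^{-t\widehat\tau}$ holds directly, with no distortion needed.

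The genuine difference is in part (1). The paper never computes $P(-\widehat\tau)$. It invokes Bowen's formula for conformal systems \cite[Theorem~4.2.13]{mubook}, which identifies $\inf\{t:P(-t\widehat\tau)\le 0\}$ with $\operatorname{HD}(\Lambda)$. It then notes that $\Lambda=[0,1]\setminus\mathcal B$ has dimension $1$ because $\mathcal B$ is countable.

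You instead prove $P(-\widehat\tau)=0$ directly. Bounded distortion from the R\'enyi condition makes $e^{-S_n\widehat\tau(\overline{w})}$ comparable to $|I_w|$, and the $n$-cylinders with first symbol $a$ tile $I_a$. You then locate the threshold at $t=1$ with the estimate $P(-(t+s)\tau)\le P(-t\tau)-s\,n_0^{-1}\log\xi$. Applied with base $t<1$ and step $1-t$, this gives $P(-t\tau)\ge (1-t)n_0^{-1}\log\xi>0$ whether or not $P(-t\tau)$ is finite. Hence $\inf\{t:P(-t\tau)\le0\}=1$, as Theorem~\ref{thm: flow pres} requires.

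Your route is self-contained and elementary; it amounts to the fact that Lebesgue measure is $(-\log|T'|)$-conformal. The paper's route is shorter on the page but hands the same distortion input to a heavier theorem about Hausdorff dimension of limit sets.
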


\begin{proof} If $x\in\Sigma$ is $n$-periodic, then
\[
\begin{aligned}
S_n\tau(x)
&=
\sum_{i=0}^{n-1}\tau(\sigma^i x)  
&=
\frac1{n_0}
\sum_{q=0}^{n_0-1}
\sum_{i=0}^{n-1}
\widehat\tau(\sigma^{i+q}x).
\end{aligned}
\]
Since \(x\) is \(n\)-periodic, for every fixed \(q\),
\[
\sum_{i=0}^{n-1}\widehat\tau(\sigma^{i+q}x)
=
\sum_{i=0}^{n-1}\widehat\tau(\sigma^i x)
=
S_n\widehat\tau(x).
\]
Therefore $S_n\tau(x)=S_n\widehat\tau(x)$
for every periodic point $x$. Hence, the periodic partition sums defining the Gurevich pressure coincide, and so $P(-t\tau)=P(-t\widehat\tau)$
for every $t\in\R$ for which the pressures are defined.

By \cite[Theorem~4.2.13]{mubook}, applied to the interval system and then
written in symbolic coordinates through the coding map $ \pi$, we have that the Hausdorff dimension, $\operatorname{HD}$ of $\Lambda$ satisfies, 
\[
\inf\left\{
t\in\mathbb R:
P_\sigma(-t\widehat\tau)\le 0
\right\}
=
\operatorname{HD}(\Lambda).
\]
Since \(\Lambda=[0,1]\setminus\mathcal B\) and \(\mathcal B\) is countable, $\operatorname{HD}(\Lambda)=1.$
Using the equality $P_\sigma(-t\tau)=P_\sigma(-t\widehat\tau),$
we obtain
$\inf\left\{t\in\mathbb R:P(-t\tau)\le 0\right\}=1.$
Therefore, by Theorem~\ref{thm: flow pres} applied to the potential \(f=0\),
\[
h_{top}(\Theta) =\inf\left\{t\in\mathbb R:P_\sigma(-t\tau)\le 0\right\}=1.
\]
This proves claim $(1)$. We now prove that the flow is SPR. Let $t>1/\alpha$. We show that $P(-t\tau)<\infty.$
By the pressure comparison above, it is enough to prove that $P(-t\widehat\tau)<\infty$. Let $x=(a_0,a_1,\ldots)\in\Sigma$. Since $\pi(\sigma^i x)\in I_{a_i},$ condition $(4)$ in the definition of an EMR map with superlinear branch expansion gives $|T'(\pi(\sigma^i x))| \ge c a_i^{\alpha}.$
Equivalently,
\[
\widehat\tau(\sigma^i x)
=
\log |T'(\pi(\sigma^i x))|
\ge
\log c+\alpha\log a_i .
\]
Hence, for every \(n\)-periodic point \(x=(a_0,a_1,\ldots)\),
\[
\exp\left(-tS_n\widehat\tau(x)\right)
\le
c^{-tn}
\prod_{i=0}^{n-1}a_i^{-t\alpha}.
\]
Using the periodic-orbit formula for the Gurevich pressure on the full shift,
we obtain
\[
\begin{aligned}
P(-t\widehat\tau)
&\le
\limsup_{n\to\infty}
\frac1n
\log
\sum_{a_0,\ldots,a_{n-1}}
c^{-tn}
\prod_{i=0}^{n-1}a_i^{-t\alpha}  \\
&=
-t\log c
+
\limsup_{n\to\infty}
\frac1n
\log
\left(
\sum_{k=1}^{\infty}k^{-t\alpha}
\right)^n  \\
&=
-t\log c
+
\log
\left(
\sum_{k=1}^{\infty}k^{-t\alpha}
\right).
\end{aligned}
\]
Since $t>1/\alpha$, the series
$\sum_{k=1}^{\infty}k^{-t\alpha}$
converges. Therefore
$P(-t\widehat\tau)<\infty,$
and consequently $P(-t\tau)<\infty$, for every $t>1/\alpha$.
It follows that
\[
s_\infty(-\tau)
:=
\inf\left\{
t\in\mathbb R:
P_\sigma(-t\tau)<\infty
\right\}
\le
\frac1{\alpha}.
\]
By Lemma~\ref{lem:entropyinf}, $h_\infty(\Theta)=s_\infty(-\tau).$ Since $\alpha>1$, we have
\[
h_\infty(\Theta)
\le
\frac1{\alpha}
<
1
=
h_{top}(\Theta).
\]
Therefore, $\Theta$ is SPR. This proves claim $(2)$.
\end{proof}

\begin{proof}[Proof of Theorem \ref{main:interval}]
Let $T$ be an EMR interval map with superlinear branch expansion and let $(\Sigma,\sigma)$ be the full shift
coding of $T$. Let $\widehat\tau:\Sigma\to\mathbb R$ be the symbolic potential $\widehat\tau(x)=\log |T'(\pi x)|,$
and $\tau:\Sigma \to \R$ as defined in equation \eqref{sym-tau}. Recall that functions $\tau$ and $\widehat\tau$ are cohomologous, and $\tau$ has summable variations and is
bounded away from zero.  Let $w=(a_0,\ldots,a_{m-1})$ be a finite word. As before,  let $b_w\in B$ be the boundary point with code $w$, and let
$x_w\in\Sigma$ be the periodic sequence $x_w=(a_0,\ldots,a_{m-1},a_0,\ldots,a_{m-1},\ldots).$
Notice that $|w|=m$ is a period of $x_w$, although it need not be
the minimal period. Since $x_w$ is $m$-periodic, recall  that we have
\begin{equation}\label{eq:tau-hattau-periodic}
        S_m\tau(x)=S_m\widehat\tau(x).
\end{equation}
Set  $s(w):=S_m\tau(x_w)$ and $\lambda(w):=\log |(T^m)'(b_w)|$.  We first record a uniform comparison between these two quantities. Since
$\widehat\tau$ has summable variations, there exists $C_0>0$ such that for every finite word $w$ we have,
\begin{equation}\label{eq:lambda-s-comparison}
        |\lambda(w)-s(w)|\le C_0
   \end{equation}
Indeed, by the equality in equation \eqref{eq:tau-hattau-periodic}, it is enough to compare
$\lambda(w)$ with $S_m\widehat\tau(x_w)$. For $0\le j\le m-1$, the interval
point $T^j b_w$ and the symbolic point $\pi(\sigma^j x_w)$ belong to the same
cylinder of length $m-j$. Therefore,
\[
\begin{aligned}
 |\lambda(w)-S_m\widehat\tau(x_w)|
 &\le
 \sum_{j=0}^{m-1}
 \left|
      \log |T'(T^j b_w)|
      -\widehat\tau(\sigma^j x_w)
 \right| 
 &\le
 \sum_{j=0}^{m-1} V_{m-j}(\widehat\tau)
 \le
 \sum_{r=1}^{\infty} V_r(\widehat\tau)<\infty .
\end{aligned}
\]
This proves the claim in equation \eqref{eq:lambda-s-comparison}.    Let $\mathcal W(R):=\{w:s(w)\le R\}$ and $N(R):=\#\mathcal W(R).$
By Proposition~\ref{en_flo}, the suspension semi-flow over $(\Sigma,\sigma)$
with roof function $\tau$ has topological entropy equal to $1$ and is SPR.
Hence the large-deviation estimate of Theorem~\ref{thm_ldp2}, applied with
$f=0$, implies the following cumulative density-one statement for marked
periodic words. There exist sets
\[
        \mathcal G_k\subset \mathcal W(\log k+C_0)
\]
such that
\begin{equation}\label{eq:good-sets-density}
        \frac{\#(\mathcal W(\log k+C_0)\setminus \mathcal G_k)}
        {N(\log k-C_0)}\longrightarrow 0,
\end{equation}
and such that, if $w_k\in\mathcal G_k$, then the invariant probability measure
$\nu_{w_k}$ supported on the corresponding periodic orbit of the suspension
semi-flow satisfies
\begin{equation}\label{eq:flow-periodic-conv}
        \nu_{w_k}\xrightarrow[k\to\infty]{}\nu_{\mathrm{mme}}
\end{equation}
in the cylinder topology, hence in the weak$^\ast$ topology. Here
$\nu_{\mathrm{mme}}$ denotes the measure of maximal entropy of the suspension
semi-flow. Indeed, for each neighbourhood
$U$ of $\nu_{\mathrm{mme}}$, Theorem~\ref{thm_ldp2} and the fact that
$\nu_{\mathrm{mme}}$ is the unique zero of the rate function imply that the
proportion of periodic words with $s(w)\le R$ and $\nu_w\notin U$ decays
exponentially. Since $N(R)$ has exponential growth rate $1$, the same estimate
holds after replacing $N(R)$ in the denominator by $N(R-2C_0)$. Taking a
countable neighbourhood basis of $\nu_{\mathrm{mme}}$ and using a standard
diagonal argument gives the sets $\mathcal G_k$. The same construction also
removes the exponentially negligible set of words with $s(w)\le \eta\log k$,
for some fixed $\eta\in(0,1)$; hence every sequence $w_k\in\mathcal G_k$
satisfies $s(w_k)\to\infty$.  We now define
\[
        A_k:=\{b_w\in\mathcal B_k:w\in\mathcal G_k\}.
\]
By definition, $A_k\subset\mathcal B_k$. We claim that
\[
        \frac{\#A_k}{\#\mathcal B_k}\longrightarrow 1.
\]
Indeed, the bound in equation \eqref{eq:lambda-s-comparison} gives the inclusions $\mathcal W(\log k-C_0)\subset \mathcal B_k
        \subset \mathcal W(\log k+C_0), $ where we identify a word with its corresponding boundary point. Therefore,
\[
\begin{aligned}
 \frac{\#(\mathcal B_k\setminus A_k)}{\#\mathcal B_k}
 &\le
 \frac{\#(\mathcal W(\log k+C_0)\setminus \mathcal G_k)}
 {N(\log k-C_0)}.
\end{aligned}
\]
The right-hand side tends to zero by \eqref{eq:good-sets-density}. Thus
$\#A_k/\#\mathcal B_k\to1$. Let now $b_{w_k}\in A_k$ be arbitrary. Since $w_k\in\mathcal G_k$, the convergence in equation 
\eqref{eq:flow-periodic-conv} holds. Let
\[
        \widehat m_{w_k}
        :=\frac1{|w_k|}\sum_{j=0}^{|w_k|-1}\delta_{\sigma^j x_{w_k}}
\]
be the corresponding symbolic periodic measure. By the Ambrose--Kakutani
correspondence, and by Remark~\ref{rem:convprob}, we have that  equation  \eqref{eq:flow-periodic-conv}
implies
\begin{equation}\label{eq:base-periodic-conv}
        \widehat m_{w_k}\xrightarrow[k\to\infty]{}m_1
        \qquad\text{and}\qquad
        \int \tau\,d\widehat m_{w_k}	\xrightarrow[k\to\infty]{} \int \tau\,dm_1,
\end{equation}
where $m_1$ is the equilibrium state of the symbolic potential $-\tau$. Since
$\tau$ and $\widehat\tau$ are cohomologous, $m_1$ is also the equilibrium state
of $-\widehat\tau$. Its projection $\pi_*m_1$ is the equilibrium measure
$\mu_1$ of the interval potential $-\log |T'|$. Moreover,
\[
        \int \tau\,d\widehat m_{w_k}=\frac{s(w_k)}{|w_k|}.
\]
Since $s(w_k)\to\infty$ and the right-hand side converges to the finite number
$\int\tau\,dm_1$, we have $|w_k|\to\infty$.

We next compare the finite boundary orbit with the periodic orbit determined
by $w_k$. Define
\[  \mu_{b_{w_k}}
        :=\frac1{|w_k|}\sum_{j=0}^{|w_k|-1}\delta_{T^j b_{w_k}}
\quad
\text{ and }
\quad
        \widetilde\mu_{w_k}:=\pi_*\widehat m_{w_k}
        =\frac1{|w_k|}\sum_{j=0}^{|w_k|-1}
          \delta_{\pi(\sigma^j x_{w_k})}.
\]
The expansion estimate for EMR maps gives a constant $C_1>0$ such that
\[
        |T^j b_{w_k}-\pi(\sigma^j x_{w_k})|
        \le C_1\xi^{-\lfloor(|w_k|-j)/n_0\rfloor}
\]
for every $0\le j\le |w_k|-1$. Hence, for every Lipschitz function
$\varphi:[0,1]\to\mathbb R$,
\[
\begin{aligned}
 \left|\int\varphi\,d\mu_{b_{w_k}}
       -\int\varphi\,d\widetilde\mu_{w_k}\right|
 &\le
 \frac{\operatorname{Lip}(\varphi)}{|w_k|}
 \sum_{j=0}^{|w_k|-1}
 |T^j b_{w_k}-\pi(\sigma^j x_{w_k})|  
 &\le
 \frac{C_2\operatorname{Lip}(\varphi)}{|w_k|}
 \longrightarrow 0 .
\end{aligned}
\]
Since $\widetilde\mu_{w_k}=\pi_*\widehat m_{w_k}$ and
$\widehat m_{w_k}\to m_1$, we have
$\widetilde\mu_{w_k}\to\pi_*m_1=\mu_1$. Therefore
\[
        \mu_{b_{w_k}}\xrightarrow[k\to\infty]{w^\ast}\mu_1.
\]
This proves the first assertion. It remains to prove the convergence of the derivative averages. By the bound in equation
\eqref{eq:lambda-s-comparison},
\[
        \left|
        \frac{\lambda(w_k)}{|w_k|}-\frac{s(w_k)}{|w_k|}
        \right|
        \le \frac{C_0}{|w_k|}\longrightarrow 0.
\]
Using the convergence in equation \eqref{eq:base-periodic-conv}, we obtain
\[
        \frac{s(w_k)}{|w_k|}
        =\int\tau\,d\widehat m_{w_k}
        \longrightarrow
        \int\tau\,dm_1.
\]
Since $\tau$ is cohomologous to $\widehat\tau$ and
$\widehat\tau=\log |T'|\circ\pi$, we have
\[
        \int\tau\,dm_1
        =\int\widehat\tau\,dm_1
        =\int\log |T'|\,d\mu_1.
\]
Finally, $|w_k|=\ell(b_{w_k})$ and  $\lambda(w_k)=\log |(T^{\ell(b_{w_k})})'(b_{w_k})|.$
Thus
\[
        \lim_{k\to\infty}
        \frac1{\ell(b_{w_k})}
        \log |(T^{\ell(b_{w_k})})'(b_{w_k})|
        =
        \int\log |T'|\,d\mu_1.
\]
This proves the second assertion and completes the proof.
\end{proof}

\subsection{Equidistribution of quadratic irrationals} 
A \emph{quadratic irrational} is an irrational real number satisfying a quadratic equation $ax^2+bx+c=0$, with $a,b,c \in \Z$ and $a \neq 0$.  Lagrange proved in 1770 that an irrational number has an eventually periodic continued fraction expansion if and only if it is a quadratic
irrational.
It has long been known that that purely periodic quadratic irrationals $x=[\overline{a_1, \dotsm a_n}]$ equidistribute with respect to the Gauss measure when ordered by the corresponding Lyapunov exponent $\frac{1}{n}\sum_{i=0} \log|(G^n(x)'|$, see for example \cite[Section 5]{p1} or \cite[Section 8]{m}. This equidistribution result follows directly from Theorem~\ref{thm_equi2} by considering the suspension flow over the Gauss map with roof function $\tau=\log |G'|$, since, as observed in the previous section, its symbolic model satisfies the required assumptions. More generally, for every expanding-Markov-Renyi maps the  periodic points equidistribute  with respect to the equilibrium measure of $-\log|T'|$ holds when ordered by its Lyapuniv exponent.

\subsection{Haas-Molnar continued fractions} \label{sec:examples} 
In this section we present examples of interval maps to which Theorem~\ref{main:interval} applies. In particular, we study a generalized class of continued fraction maps introduced by Haas and Molnar~\cite{hm}. Fix $r \in \N$ and consider the map
\[
T_r \colon (0,1] \to [0,1], \qquad
T_r(x) = \frac{r}{x} - \Big[ \frac{r}{x} \Big],
\]
where $[ x ]$ denotes the integer part of $x$. Note that $T_r$ satisfies the assumptions of Section \ref{sec:interval} with  $(d_n^r)_{n\ge r}$ given by $d_n^r =r/n$, for every $n \geq r$. Moreover,  $T_r$ admits a unique equilibrium measure for the potential $-\log |T_r'|$  (see \cite{hm} for its construction), namely
\[
\mu_1^r(A)
=
\frac{1}{\log\!\left(\frac{r+1}{r}\right)}
\int_A \frac{1}{x+r}\, dx .
\]
Associated with $T_r$ there is a continued fraction expansion of the form:
\[
x
=
\cfrac{r}{a_1 + r + \cfrac{r}{a_2 +r + \cfrac{r}{a_3 + r + \dots}}}
=
[a_1,a_2,a_3,\dots]_r,
\]
where $a_i \in \mathbb{N} \cup \{0\}$. In this representation, the map $T_r$ acts as the full-shift. The corresponding continued fraction theory was developed in~\cite{hm}. Note that when $r=1$ the map $T_r$ is the Gauss map, $\mu_1^1$  is the Gauss measure  and we obtain the classical continued fraction theory \cite{hw,ki}.

The set of boundary points  $\B= \bigcup_{n \geq 1} T_r^{-n}(0)$ will be denoted by $\mathbb{Q}_r$ and called the set of 
 \emph{$r$-rational points}. Note that $\mathbb{Q}_r \subset \mathbb{Q}$ and that if $r=1$ then $\mathbb{Q}_r=\mathbb{Q}$. 
 
\begin{remark}
If  $p/q \in \mathbb{Q}_r$ is such that $p/q=[a_1, \dots , a_{\ell(p/q)}]$ then
\begin{equation*}
|(T^{\ell(p/q)}(p/q))'|= r^{\ell(p/q) -2} q^2.
\end{equation*}
In particular, the logarithm of the multiplier of $p/q$ is given by 
\begin{equation} \label{r-order}
\lambda(p/q)= \sum_{i=0}^{\ell(p/q)-1} \log|T_r'(T^i(p/q))| = 2\log q + (\ell(p/q) -2) \log r.
\end{equation}
For $r=1$ this is a classical result. As observed in Section \ref{sec:interval}, ordering rationals (or $r$-rationals) by the size of their denominator and by the length of their orbit leads to different orderings. Remarkably, by equation~\eqref{r-order}, when considering the suspension flow over $T_r$ with roof function $\log|T'_r|$, the orbit corresponding to $(p/q, 0)$ has length $ 2\log q + (\ell(p/q) -2) \log r$ and these two orderings become compatible. In the case $r=1$ this correspondence is particularly transparent.
\end{remark}

Theorem~\ref{main:interval} implies that the orbits of boundary points, $\mathbb{Q}_r$, equidistribute with respect to the equilibrium  measure $\mu_1^r$.  Indeed, for $b \in \mathbb{Q}_r$ let
\begin{equation*}
\mu_b= \frac{1}{\ell(b)} \sum_{i=0}^{\ell(b)-1} \delta_{T_r^i b}.
\end{equation*}
For each $k \in \N$ let
\begin{equation*}
\B_k= \left\{b \in \mathbb{Q}_r :	|(T_r^{\ell(b)})'(b)| < k 	\right\}.
\end{equation*}
We have the following result:

\begin{theorem} \label{thm-cont-frac}
  For every $k\in\N$ there exists $A_k\subset \B_k$ such that
\[
\lim_{k\to\infty} \frac{\# A_k}{\# \B_k} = 1,
\]
with the property that for every $k\in \N$ and $b_k\in A_k$,  the sequence of measures $(\mu_{b_k})_k$ converges in the weak$^\ast$ topology, as $k$ tends to infinity, to the equilibrium measure $\mu_1^r$ of $-\log|T_r'|$. Moreover, 
\[
\lim_{k\to\infty}
\frac{1}{\ell(b_k)}
\log \big|(T_r^{\ell(b_k)})'(b_k)\big|
=
\int \log |T_r'| \, d\mu_1^r
=
2 \log \sqrt{r}
-
2 \log \left(\frac{r+1}{r}\right)^{-1}
\int_{-1/r}^0 \frac{\log(1-t)}{t}\, dt .
\]
 \end{theorem}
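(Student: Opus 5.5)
The plan is to deduce the statement directly from Theorem~\ref{main:interval}. Everything then reduces to two tasks: checking that $T_r$ is an EMR map with superlinear branch expansion, and computing the integral $\int \log|T_r'|\,d\mu_1^r$ explicitly.

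\textbf{Step 1: verify the hypotheses.} The branches of $T_r$ are $x\mapsto r/x-n$ on $I_n=(r/(n+1),r/n]$ for $n\ge r$. After relabelling $n\mapsto n-r+1$, this matches the setup of Section~\ref{sec:interval}. Each branch is monotone and onto $[0,1]$ and smooth on the interior, so condition (1) holds. On the branch we have
\[
|T_r'(x)|=\frac{r}{x^2}\ge \frac{n^2}{r},
\]
so condition (4) holds with $\alpha=2$ (after reindexing $n\ge r$, we have $n^2\ge c\,m^2$ for the new index $m$). For the R\'enyi condition (3), note that $|T_r''(x)|=2r/x^3$, so
\[
\frac{|T_r''(x)|}{|T_r'(y)||T_r'(z)|}=\frac{2y^2z^2}{r x^3}.
\]
For $x,y,z\in I_n$ this is at most $2(r/n)^4/\bigl(r(r/(n+1))^3\bigr)$, which is bounded uniformly in $n$. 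For uniform expansion (2): on $I_n$ with $n\ge r+1$ we get $|T_r'|\ge (r+1)^2/r>1$. On the first branch $I_r=(r/(r+1),1]$ we only have $|T_r'|\ge r$, which fails to exceed $1$ when $r=1$. So I will take $n_0=2$, exactly as for the Gauss map: a point landing in $I_r$ is sent near $0$ by the next iterate, into a branch with larger derivative, and a direct computation of $(T_r^2)'$ gives a uniform bound $\xi>1$.

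\textbf{Step 2: apply the main theorem.} With the hypotheses verified, Theorem~\ref{main:interval} produces the sets $A_k\subset\mathcal B_k$ with density one, the weak$^\ast$ convergence $\mu_{b_k}\to\mu_1$, and the convergence of $\frac1{\ell(b_k)}\log|(T_r^{\ell(b_k)})'(b_k)|$ to $\int\log|T_r'|\,d\mu_1$. By the uniqueness quoted from \cite{hm}, the measure $\mu_1$ is exactly $\mu_1^r$.

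\textbf{Step 3: compute the integral.} Using $\log|T_r'(x)|=\log r-2\log x$ and the density of $\mu_1^r$,
\[
\int\log|T_r'|\,d\mu_1^r=\log r-\frac{2}{\log\frac{r+1}{r}}\int_0^1\frac{\log x}{x+r}\,dx .
\]
To evaluate the last integral, substitute $x=-rt$, so that $x+r=r(1+t)$. Then integrate by parts, using the antiderivative $\log(1+x/r)$ of $1/(x+r)$:
\[
\int_0^1 \frac{\log x}{x+r}\,dx=\Bigl[\log x\,\log\bigl(1+\tfrac{x}{r}\bigr)\Bigr]_0^1-\int_0^1\frac{\log(1+x/r)}{x}\,dx .
\]
The boundary term vanishes at both endpoints. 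In the remaining integral, substitute $x=-rt$ to obtain
\[
\int_0^1\frac{\log(1+x/r)}{x}\,dx=\int_{-1/r}^0\frac{\log(1-t)}{t}\,dt .
\]
Combining these with $\log r=2\log\sqrt r$ gives the stated formula, with careful sign tracking.

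The main obstacle I expect is condition (2) for $r=1$ (and generally on the first branch), where the expansion must come from $n_0=2$ rather than a single iterate. A secondary point is to state the index shift precisely, so that condition (4) is stated in the paper's labelling with $d_1=1$.
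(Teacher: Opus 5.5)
Your overall route is the paper's. The paper simply observes that $T_r$ satisfies the assumptions of Section~\ref{sec:interval}, invokes Theorem~\ref{main:interval}, and identifies $\mu_1=\mu_1^r$ by uniqueness; it does not evaluate the integral at all. Your checks of (1), (3), (4) are correct. For (2), your heuristic that points of $I_r$ are sent near $0$ is not accurate, since points near $r/(r+1)$ are sent near $1$. The computation you promise is nevertheless immediate. On $I_r$ one has $x\,T_r(x)=r(1-x)$, so $|(T_r^2)'(x)|=(1-x)^{-2}\ge (r+1)^2$. On $I_n$ with $n\ge r+1$ one has $|(T_r^2)'|\ge \frac{(r+1)^2}{r}\cdot r$, because $|T_r'|\ge r$ everywhere.

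The one concrete error is in Step 3, where you have a sign slip. The substitution $x=-rt$ sends $[0,1]$ to $[0,-1/r]$ and reverses orientation; moreover $dx/x=dt/t$ and $x+r=r(1-t)$, not $r(1+t)$. Hence
\[
\int_0^1\frac{\log(1+x/r)}{x}\,dx=-\int_{-1/r}^0\frac{\log(1-t)}{t}\,dt .
\]
This sign is forced: the left side is positive, while $\int_{-1/r}^0\frac{\log(1-t)}{t}\,dt<0$.

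If you combine your display as written with your (correct) integration by parts, you get $\log r+\frac{2}{\log\frac{r+1}{r}}\int_{-1/r}^0\frac{\log(1-t)}{t}\,dt$. For $r=1$ this equals $-\pi^2/(6\log 2)$, which is impossible for a Lyapunov exponent. The correct chain is
\[
\int_0^1\frac{\log x}{x+r}\,dx=\int_{-1/r}^0\frac{\log(1-t)}{t}\,dt .
\]
This gives exactly $\log r-\frac{2}{\log\frac{r+1}{r}}\int_{-1/r}^0\frac{\log(1-t)}{t}\,dt$, which is the stated value. Here the factor $2\log\bigl(\frac{r+1}{r}\bigr)^{-1}$ in the statement has to be read as $2\bigl(\log\frac{r+1}{r}\bigr)^{-1}$. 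As a check, for $r=1$ this formula gives $\pi^2/(6\log 2)$.
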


This theorem asserts that, for most sequences of $r-$rationals $p/q$ whose denominators tend to infinity, the corresponding $T_r-$orbits equidistribute with respect to the equilibrium measure associated to $-\log|T_r'|$. The statement does not hold for arbitrary sequences. For instance, if $r=1$ and $p/q = 1/q$, then $1/q = [q]$, so $\mu_{1/q} = \delta_{1/q}$ and hence
\[
\lim_{q \to \infty} \mu_{1/q} = \delta_0.
\]

Actually, the case $r=1$ has received a great deal of attention. Indeed, it is well known that if $p/q = [a_1,\dots,a_n]_1$ is a rational number in $(0,1)$ written in lowest terms, then the number of steps required by the Euclidean algorithm to compute $\gcd(p,q)$ is exactly $n$. Estimating the number of steps $\ell(p/q)$ needed in the Euclidean algorithm for typical pairs of integers was first studied by Heilbronn~\cite{he} and later by several other authors, for example \cite{bv, by, h, po}. It was shown that for most rationals $p/q$, the quantity $\ell(p/q)$ is asymptotically proportional to $\log q$ as $q \to \infty$. For example, Dixon~\cite{di} proved that there exists a constant $c_0>0$ such that, for any $\varepsilon>0$,
\[
\left| \ell(p,q) - \frac{12 \log 2}{\pi^2} \log q \right|
<
 (\log q)^{ \frac{1}{2}+ \varepsilon },
\]
for all but at most $x^2 \exp\!\big(c_0 (\log x)^{\varepsilon/2}\big)$ pairs of integers $1 \le p \le q \le x$. 

In 1969, Heilbronn~\cite{he} established a weaker averaged version of Theorem \ref{thm-cont-frac} for $r=1$. He proved that the sequence of measures
\[
\bar{\mu}_q
=
\frac{1}{\#\B_q}
\sum_{p \in \B_q} \mu_{p/q}
\]
converges, as $q \to \infty$, to the Gauss measure. More recently,  David and Shapira in~\cite[Theorem~1.1(2)]{ds} established a version of Theorem \ref{thm-cont-frac} again for the Gauss map (that is,  for $r=1$). The proof  in~\cite{ds} relies on methods from homogeneous dynamics. In contrast, in the present work, we have provided an alternative approach based on equidistribution results for periodic orbits that follow from large deviations estimates (Theorem~\ref{thm_equi2}). Interestingly, our results are flexible and can be applied to a wide range of examples, as the following proof shows.  

\begin{proof}[Proof of Theorem~\ref{thm:prob}]
We use the symbolic coding of the Gauss map. Let
\(\Sigma=\mathbb N^{\mathbb N}\) be the full shift and let
\(\pi:\Sigma\to(0,1)\) be the continued-fraction coding map. Define $\widehat\tau(x):=\log |G'(\pi x)|$ and
$\tau:=\frac12\left(\widehat\tau+\widehat\tau\circ\sigma\right)$. Then $\tau$ has summable variations and is bounded away from zero. Moreover, for every periodic point $x$ of period $m$, we have $S_m\tau(x)=S_m\widehat\tau(x)$.
Let $Y$ be the suspension over $(\Sigma,\sigma)$ with roof function $\tau$, and let $\Theta=(\Theta_t)_{t\ge0}$ be the corresponding suspension semi-flow. Recall that this suspension flow has topological entropy equal to $1$, and its measure of maximal entropy is $\nu_{mme}=AK(m_G),$
where $m_G$ is the Gauss measure in symbolic coordinates. Furthermore,
\[
\int \tau\,dm_G
=
\int \widehat\tau\,dm_G
=
\int \log |G'|\,d\mu_G
=
\frac{\pi^2}{6\log 2}.
\]
Set $\gamma:=\frac{6\log 2}{\pi^2}$. For a finite word $w=(a_1,\ldots,a_m)$, let $x_w=\overline{(a_1,\ldots,a_m)}$
be the corresponding periodic point, and $s(w):=S_m\tau(x_w).$
Let
\[
\mu_w:=\frac1m\sum_{j=0}^{m-1}\delta_{\sigma^j x_w}
\quad \text{ and } \quad \nu_w:=AK(\mu_w)
\]
be the corresponding invariant probability measure for the suspension flow. Then
\[
\int\tau\,d\mu_w=\frac{s(w)}{m},
\quad \text{ and therefore } \quad
\frac{m}{s(w)}
=
\left(\int\tau\,d\mu_w\right)^{-1}.
\]
Fix $\epsilon>0$. Define
\[
\mathcal E_\epsilon
:=
\left\{
\nu_w:
\left|
\frac{|w|}{s(w)}-\gamma
\right|
\ge \frac{\epsilon}{2}
\right\},
\]
and let
\[
\mathcal K_\epsilon
:=
\overline{\mathcal E_\epsilon}
\subset \mathcal M_{\le1}(\Theta),
\]
where the closure is taken in the cylinder topology. Thus
\(\mathcal K_\epsilon\) is closed by definition.
We claim that $\nu_{mme}\notin \mathcal K_\epsilon$.
Indeed, suppose by contradiction that there exists a sequence $(w_n)_n$ such that $\nu_{w_n}\to\nu_{mme}$
in the cylinder topology and for every $n \in \N$ we have
\[
\left|
\frac{|w_n|}{s(w_n)}-\gamma
\right|
\ge \frac{\epsilon}{2}
\]
Since both $\nu_{w_n}$ and $\nu_{mme}$ are probability measures, Remark~\ref{rem:convprob} implies that
$\mu_{w_n}\to m_G$ on cylinders and
\[
\int\tau\,d\mu_{w_n}
\longrightarrow
\int\tau\,dm_G .
\]
Consequently,
\[
\frac{|w_n|}{s(w_n)}
=
\left(\int\tau\,d\mu_{w_n}\right)^{-1}
\longrightarrow
\left(\int\tau\,dm_G\right)^{-1}
=
\gamma,
\]
which contradicts the definition of \(\mathcal E_\epsilon\). Hence
\(\nu_{mme}\notin\mathcal K_\epsilon\).

By Lemma~\ref{lem:beta}, or equivalently by the positivity of the rate
function away from the unique measure of maximal entropy, we have
\[
\beta_\epsilon:=\inf_{\nu\in\mathcal K_\epsilon} I_0(\nu)>0.
\]
Applying Theorem~\ref{thm_ldp2} with potential \(0\), and using the
full-shift/BIP version of the periodic-orbit estimates, there exist constants
\(C_\epsilon>0\), \(\rho_\epsilon>0\), and \(T_\epsilon>0\) such that, for all
\(T\ge T_\epsilon\),
\begin{equation}\label{eq:ldp-cumulative-prob}
\#\left\{
(x;s)\in\mathrm{FPer}(T):
\nu_x\in\mathcal K_\epsilon
\right\}
\le
C_\epsilon e^{(1-\rho_\epsilon)T}.
\end{equation}
Here \(\mathrm{FPer}(T)\) denotes periodic data of suspension length at most
\(T\). This cumulative estimate follows from the window estimate in
Theorem~\ref{thm_ldp2} by summing over intervals of fixed size; the resulting
polynomial factor is absorbed after decreasing \(\rho_\epsilon\), if
necessary.

We now compare suspension length with denominators of rationals. Let
\(p/r\in(0,1)\), \((p,r)=1\), and choose its canonical finite continued
fraction expansion
\[
\frac pr=[a_1,\ldots,a_m],
\]
so that \(m=\ell(p/r)\). Let \(w=(a_1,\ldots,a_m)\). By bounded distortion for
continued-fraction cylinders, there exists \(C_0>0\), independent of \(p/r\),
such that
\begin{equation}\label{eq:susp-denom-comparison}
\left|s(w)-2\log r\right|\le C_0 .
\end{equation}
Let $\tau_0:=\inf\tau>0.$ Since $s(w)\ge m\tau_0$, we obtain
\[
\begin{aligned}
\left|
\frac{m}{s(w)}
-
\frac{m}{2\log r}
\right|
&=
m
\left|
\frac{2\log r-s(w)}{s(w)\,2\log r}
\right|  
&\le
\frac{C_0m}{s(w)\,2\log r}
\le
\frac{C_0}{2\tau_0\log r}.
\end{aligned}
\]
Therefore, there exists \(R_\epsilon\ge1\) such that, whenever
\(r\ge R_\epsilon\),
\[
\left|
\frac{m}{2\log r}-\gamma
\right|
\ge \epsilon
\quad\Longrightarrow\quad
\left|
\frac{m}{s(w)}-\gamma
\right|
\ge \frac{\epsilon}{2}.
\]
Equivalently, for \(r\ge R_\epsilon\),
\[
\left|
\frac{\ell(p/r)}{2\log r}
-
\frac{6\log 2}{\pi^2}
\right|
\ge \epsilon
\quad\Longrightarrow\quad
\nu_w\in\mathcal K_\epsilon .
\]
Now let
\[
\Q(q)
=
\left\{
\frac pr\in(0,1):
1\le p<r\le q,\ (p,r)=1
\right\}
\]
and
\[
\Phi(q)=\#\Q(q)=\sum_{r\le q}\varphi(r).
\]
If \(p/r\in\Q(q)\), then \(r\le q\), and hence
\[
s(w)\le 2\log r+C_0\le 2\log q+C_0.
\]
Thus the set
\[
\left\{
\frac pr\in\Q(q):
\left|
\frac{\ell(p/r)}{2\log r}-\gamma
\right|
\ge\epsilon
\right\}
\]
is contained, up to the finitely many rationals with \(r<R_\epsilon\), in the
set of periodic data satisfying
\[
s(w)\le 2\log q+C_0
\qquad\text{and}\qquad
\nu_w\in\mathcal K_\epsilon .
\]
The finitely many exceptional rationals contribute \(O_\epsilon(1)\). Hence,
by \eqref{eq:ldp-cumulative-prob}, for all sufficiently large \(q\),
\[
\begin{aligned}
\#\left\{
\frac pr\in\Q(q):
\left|
\frac{\ell(p/r)}{2\log r}-\gamma
\right|
\ge\epsilon
\right\}
&\le
O_\epsilon(1)
+
C_\epsilon
\exp\left((1-\rho_\epsilon)(2\log q+C_0)\right) 
&\le
C'_\epsilon q^{2(1-\rho_\epsilon)} .
\end{aligned}
\]

Finally, since
\[
\Phi(q)=\sum_{r\le q}\varphi(r)\asymp q^2,
\]
there exists \(c>0\) such that \(\Phi(q)\ge cq^2\) for all sufficiently large
\(q\). Therefore
\[
\frac{1}{\Phi(q)}
\#\left\{
\frac pr\in\Q(q):
\left|
\frac{\ell(p/r)}{2\log r}-\gamma
\right|
\ge\epsilon
\right\}
\le
C''_\epsilon q^{-2\rho_\epsilon}.
\]
After decreasing the exponent once more, we obtain a constant
\(r_\epsilon>0\) such that, for all sufficiently large \(q\),
\[
\frac{1}{\Phi(q)}
\#
\left\{
\frac pr\in\Q(q):
\left|
\frac{\ell(p/r)}{2\log r}
-
\frac{6\log 2}{\pi^2}
\right|
\ge\epsilon
\right\}
\le
q^{-2r_\epsilon}.
\]
This proves the theorem.
\end{proof}

\end{document}